\definecolor{PLBgras}{rgb}{0.06,0.42,0.60}
\pgfplotsset{compat=1.14}
\definecolor{BB}{RGB}{162, 22, 22}
\definecolor{NS}{RGB}{12,133,23}
\definecolor{PLB}{rgb}{0.06,0.42,0.60}
\definecolor{PLBfonce}{rgb}{0.06,0.42,0.60}
\definecolor{PLBmoyen}{RGB}{176, 216, 232}
\definecolor{PLBpale}{rgb}{0.94,0.965,0.965}
\theoremstyle{definition}
\newtheorem{deff}{Definition}
\newtheorem*{assump}{Assumptions}
\theoremstyle{plain}
\newtheorem{prop}[deff]{Proposition}
\newtheorem{thrm}[deff]{Theorem}
\newtheorem{coro}[deff]{Corollary}
\newtheorem{lemma}[deff]{Lemma}
\newtheorem{procedure}[deff]{Method}
\theoremstyle{remark}
\newtheorem{rem}[deff]{Remark}
\newcommand{\Z}{\mathbb{Z}}
\newcommand{\N}{\mathbb{N}}
\newcommand{\R}{\mathbb{R}}
\newcommand{\D}{\mathbb{D}}
\newcommand{\Dbar}{\overline{\mathbb{D}}}
\renewcommand{\S}{\mathbb S}
\newcommand{\C}{\mathbb{C}}
\renewcommand{\>}{\geqslant}
\newcommand{\<}{\leqslant}
\newcommand{\interval}[2]{\llbracket #1 : #2 \rrbracket}
\newcommand{\ind}{\text{Ind}}
\newcommand{\symbole}{\gamma}
\newcommand{\dt}{{\Delta t}} 			
\newcommand{\dx}{{\Delta x}}			
\newcommand{\Ind}{\mathrm{Ind}}
\newcommand{\fun}[5]{#1 : #2 \in #3 \mapsto  #4\in #5}
\newcommand{\egdef}{\overset{def}{=}}
\newcommand{\project}{\pi}
\newcommand{\DKLindep}{\Delta}
\newcommand{\DKLindepinvers}{\widetilde{\Delta}}
\newcommand{\DKLindepdivided}{\mathring{\Delta}}
\newcommand{\G}{\mathcal E^s}
\newcommand{\U}{\mathcal U}
\newcommand{\courbe}{\DKLindep(\S)}
\newcommand{\Es}{\mathcal E^s(z)}
\NewDocumentCommand{\DKL}{s}{    
\IfBooleanTF{#1}		            
 {\Delta_{0}}{\Delta_{\mathrm{KL}}}}
 \newcommand{\nomCa}{D}
 \newcommand{\nomCb}{D'}
 \newcommand{\mult}{\beta} 
 \newcommand{\nbrmult}{M}
 \newcommand{\borneinfintervalspatial}{x_{\sigma}}
 \newcommand{\gap}{\sigma}
\begin{document}


\title[On the stability of totally upwind schemes for the hyperbolic IBVP]{On the stability of totally upwind schemes for the hyperbolic initial boundary value problem}
\author{Benjamin~Boutin \and Pierre~Le~Barbenchon \and Nicolas~Seguin}
\address{Univ Rennes, CNRS, IRMAR - UMR 6625\\F-35000 Rennes, France.}
\email{benjamin.boutin@univ-rennes1.fr}
\email{pierre.lebarbenchon@univ-rennes1.fr}
\address{IMAG, Inria d'Université Côte d'Azur, Univ. Montpellier, CNRS, Montpellier, France}
\email{nicolas.seguin@inria.fr}
\thanks{This work has been partially supported by ANR project NABUCO, ANR-17-CE40-0025 and by Centre Henri Lebesgue, program ANR-11-LABX-0020-0.}
\date{\today}

\begin{abstract}
    In this paper, we present a numerical strategy to check the strong stability (or GKS-stability) of one-step explicit totally upwind schemes in 1D with numerical boundary conditions. 
    The underlying approximated continuous problem is the one-dimensional advection equation.  
    The strong stability is studied using the Kreiss-Lopatinskii theory. We introduce a new tool, the intrinsic Kreiss-Lopatinskii determinant, which possesses remarkable regularity properties. By applying standard results of complex analysis, we are able to elate the strong stability of numerical schemes to the computation of a winding number, which is robust and cheap. 
    The study is illustrated with the Beam-Warming scheme together with the simplified inverse Lax-Wendroff procedure at the boundary.
\end{abstract}

\maketitle

\noindent {\small {\bf AMS classification:} 65M12, 65M06}

\noindent {\small {\bf Keywords:} boundary conditions, Kreiss-Lopatinskii determinant, GKS-stability, finite-difference methods, inverse Lax-Wendroff}


\section{Introduction}


\subsection{Motivations}

The purpose of this work is to establish an efficient numerical strategy to determine whether a given finite difference method on the half line is stable or not. More precisely, the study is focused on a certain subclass of explicit one-step linear finite difference schemes, specified hereafter. We restrict our attention to the approximation of a rightgoing linear advection equation set on the positive real axis:
\begin{align}\label{eq:advection}
    \begin{cases}
     \partial_t u + a \partial_x u = 0, &  t\>0, x \>0,\\
    u(t,0) = g(t), & t\>0, \\
     u(0,x) = f(x), & x\>0,
    \end{cases}
\end{align}
where $u(t,x)\in\R$. 
The velocity is assumed to be positive $a>0$ so that at the inflow boundary located at the point $x=0$, a physical boundary datum $g$ is  prescribed.
\bigskip

Let us first recall some general ideas and historical context. As a central idea in numerical analysis, the Lax equivalence theorem \cite{Lax56} asserts that a consistent scheme is convergent if and only if it is stable. 
Therefore, all along the paper only consistent numerical schemes are considered, and the discussion concentrates only on their stability issues. While the Cauchy-stability for the space-periodic problem is easily handled with the Fourier symbolic analysis and the so-called Von-Neumann stability analysis, the case with boundaries is 
significantly trickier. Indeed, the presence of (unphysical) numerical boundary conditions forms another kind of instabilities.
The normal mode analysis, directly related to the work by Godunov and Ryabenkii \cite{Godunov63}, is the classic way to comprehend those kinds of instabilities. Deepening this analysis with resolvent estimates and Laplace transform leads to the notion of \emph{GKS-stability} \cite{Gustafsson72} (sometimes called \emph{strong stability}, see Definition~\ref{defstabilite} hereafter). This notion is actually the most robust one concerning the stability of initial boundary value numerical methods, since this stability property is stable by perturbations and makes use of the same norms for the solution and for the data itself. 
These features make possible further extensions to more general cases (e.g. nonlinearities), as it is done for the initial boundary value problem in the case of partial differential equations 
\cite{Benzoni06}. 
In this setting, the Kreiss theorem (see Theorem~\ref{thrmKreiss} later) expresses a necessary and sufficient condition for strong stability by the use of the so-called Uniform Kreiss-Lopatinskii Condition.  
When this condition fails, the corresponding instabilities may be interpreted as numerical wave packets with exponential growth in time and/or bad group velocities (see Trefethen~\cite{Trefethen83, Trefethen84}).
Some sketches of the strong stability theory will be unfolded later on, but we refer the interested reader to the 
monograph \cite{Gustafsson08} by Gustafsson and  \cite{Gustafsson13}  by Gustafsson, Kreiss and Oliger for a more complete overview of the GKS-stability theory.
\bigskip

The GKS-stability theory is not used so often in the numerical analysis literature. The reason is  that the Uniform Kreiss-Lopatinskii Condition requires the search for the vanishing points of the Kreiss-Lopatinskii determinant, which is a complex-valued function  defined on $\{|z|\>1\}$. Except for some particular numerical schemes and boundary conditions, this determinant is not known explicitly. 
Indeed, the complexity of the underlying algebra rapidly increases as the size of stencil increase.
As an example, Thuné develop in \cite{Thune86} a software system for investigating the GKS-stability. Nevertheless, the method requires the numerical approximate computation of the roots of some parameterized characteristic polynomial equation, and may be expensive in terms of CPU time.
In order to tackle the stability properties of the discrete initial boundary value problem, some other strategies are available in the literature. 
Among them, the most natural approach is based on the spectral properties of the operator corresponding to the time-iteration  in the numerical scheme. For a large but finite grid of size $J$, it is represented by a matrix $T_J$ of size $J$. It is a banded Toeplitz or a quasi-Toeplitz matrix depending on the boundary conditions under consideration. Beam and Warming \cite{Beam93} study the asymptotic spectra of such matrices in the limit of large $J$.
Roughly speaking, the stability properties are then related to the uniform boundedness of the powers of the matrix $T_J$, known as the Kreiss matrix Theorem \cite[Chap 18]{Trefethen05}.
Nevertheless, the main difficulty is to also guarantee another uniform boundedness property, with respect to the dimension $J$. 
The uniform boundedness is not easy to characterize by spectral properties. Some specialized tools exist to that aim: resolvent estimates and $\epsilon$-pseudospectrum.
For a wide overview of the Kreiss matrix Theorem and its relationship with resolvent estimates and with the central notion of $\epsilon$-pseudospectrum~\cite{Borovykh00, Spijker02}, we refer the reader to the book by Trefethen and Embree \cite{Trefethen05}. Nonetheless, to our knowledge, the link between GKS-instabilities and the pseudospectrum of the family of quasi-Toeplitz matrices associated to a given scheme is still not completely understood.
In the numerical analysis literature, a first attempt thus consists in considering only grids with a large but fixed size $J$. The postulate is that the asymptotic spectral properties are then already available. This strategy has been used by Dakin, Despres and Jouen \cite{Dakin18} for analyzing some specific boundary conditions that we will again consider with our own method in the present paper.

\bigskip

In the present work, the selected strategy is based on the Uniform Kreiss-Lopatinskii Condition and the search of the vanishing points of the corresponding Kreiss-Lopatinskii determinant, that is a function of the complex parameter $z$ defined for $|z|\> 1$. Instead of using the Kreiss-Lopatinskii determinant, we define the \emph{intrinsic Kreiss-Lopatinskii determinant} that shares the same zeros with the Kreiss-Lopatinskii determinant. 
The main result of the paper (Theorem~\ref{thrm:rationalfraction}) yields an explicit formula for the intrinsic Kreiss-Lopatinskii determinant, showing that it is holomorphic on $\{|z|>1\}$. Moreover, the formula does not require the numerical computation of the roots of the associated characteristic equation. 
Thus, this new theoretical result is particularly useful for numerical applications. Indeed, Corollary~\ref{thrm:nbrzerodet} presents a strategy to find the number of zeros of the intrinsic Kreiss-Lopatinskii determinant on the domain $\{|z|>1\}$ using a numerical computation of winding numbers. Hence, this corollary enables the Method~\ref{proc:numericalprocedure} to tackle the stability of the scheme.
The whole study in this paper is restricted to totally upwind schemes, so the consistency order is limited to 2 (see Iserles \cite{Iserles83}). As typical examples, we therefore focus on the classic first-order upwind and Beam-Warming schemes, while the generality of the study comes from the fact that we can take any extrapolation boundary condition using some points of the domain (the precise form of the considered boundary conditions will be set later at equation \eqref{eqbord}). In the paper, 
the numerical examples deal with the inverse Lax-Wendroff 
boundary condition, and the simplified variants of it, as introduced by Tan, Shu and Vilar in \cite{Tan10, Vilar15} and used by Li, Shu and Zhang in \cite{Li16,Li17,Li22} to solve advection and diffusion equations.
These authors consider a stability analysis based either on the Godunov-Ryabenkii algebraic condition, or by the so-called eigenvalue spectrum visualization method. This last method again requires the use of a finite grid and the computation of the eigenvalues for a large banded matrix.

\bigskip 

The outline of the paper is as follows. In the sequel of this introductive section, we describe the main assumptions and the notion of stability into play. In Section~\ref{sec:detKL}, we set up the main tool for our study that is the Kreiss-Lopatinskii determinant and the intrinsic Kreiss-Lopatinskii determinant, then we state our main results. In Section~\ref{sec:proof}, we prove these results relying on  
linear algebra tools and complex analysis results.
Section~\ref{sec:numerical} gathers several examples and numerical experiments for illustrating the efficiency of the proposed strategy.

\subsection{Notations and assumptions}

Throughout this paper we denote $\S= \{z\in \C, |z|=1\}$ the unit circle, $\D= \{z\in \C, |z|<1\}$ the open unit disk, $\mathcal U= \{z\in \C, |z|>1\}$ the exterior domain and $\overline{\mathcal U} = \{z\in \C, |z|\> 1\}$ its closure. For $n<m$, the notation $\interval{n}{m}$ is for the set $\{k\in \N, n\<k\<m\}$.\\

At the discrete level, we consider explicit one-step finite difference methods of the form
\begin{equation}\label{eq:stdscheme}
    U_{j}^{n+1} = \displaystyle \sum_{k = -r}^p a_k U_{j+k}^n,
\end{equation}
with integers $r,p\> 0$. Here, the unknown of the scheme $U_j^n$ is expected to approximate the quantity~$u(n\dt, j\dx)$. The time step $\dt>0$ and the space step $\dx>0$ are usually choosen with respect to some CFL condition $\lambda = a\dt/\dx \<\lambda_{\textsf{CFL}}$ discussed later on.

The \emph{symbol} associated to the scheme \eqref{eq:stdscheme} is defined, for $\xi \in \R$, by
\begin{equation}\label{eq:symbol}
    \symbole(\xi) =\sum_{k = -r}^p a_k e^{ik\xi}.
\end{equation}

The common set of assumptions used hereafter is the following one. 
\begin{assump}
The scheme~\eqref{eq:stdscheme} is 
\begin{enumerate}[label={(H\arabic{*})}, ref={\rmfamily(H\arabic{*})}]
    \setcounter{enumi}{-1}
    \itemsep0.5em
    \item\label{assumption:nondegenerate}
    \emph{non-degenerate}, in the sense that $a_{-r}\neq 0$,
    \item\label{assumption:totallyupwind}
    \emph{totally upwind}, in the sense that $p=0$,
    \item\label{assumption:cauchystab}
    \emph{Cauchy-stable}, meaning that the symbol $\gamma$ satisfies
         $|\symbole(\xi)|\<1$ for all $\xi\in \R$.
    \item\label{assumption:consistency}
    \emph{consistent} and at least first order, meaning that 
        \[\gamma(0)=\sum_{k = -r}^p a_k = 1\text{ and } -i\gamma'(0)=\sum_{k = -r}^p k a_k = -\lambda.\]
\end{enumerate}
\end{assump}

When dealing with the discrete schemes set over the full line $j\in\mathbb{Z}$, the algebraic characterization of the Cauchy-stability classically follows from the Fourier analysis and makes use of the symbol~$\gamma$. This method is known as the Von Neumann analysis (see \cite{Courant28} and \cite{Crank47}). In the scalar case, it reduces to a geometric property concerning the following closed complex curve.

\begin{deff}\label{def:symbolcurve}
    The \emph{symbol curve} $\Gamma$ is the closed complex parametrized curve
    \[
        \Gamma = \{\theta\in[0,2\pi]\mapsto \gamma(\theta)\}.
    \]
\end{deff}

This definition enables a geometric interpretation of the Cauchy-stability assumption~\ref{assumption:cauchystab} reformulated equivalently as the inclusion $\Gamma\subset\overline{\D}$ (see later Figure~\ref{fig:BWsymbol} for the Beam-Warming scheme). In the same vein, the consistency assumption~\ref{assumption:consistency} admits a geometric form through a first order tangency property of $\Gamma$ to the vertical axis at the parameter point $\theta=0$.
\bigskip

{The stability condition~\ref{assumption:cauchystab} can be easily illustrated graphically in the complex plane. In some sense, our goal is to extend this kind of graphical study when including the numerical boundary conditions.}

\bigskip

For solving the Initial Boundary Value Problem (IBVP)~\eqref{eq:advection} with the discrete scheme~\eqref{eq:stdscheme}, $r$ additional ghost points are needed to take into account the left boundary condition and to fully define the discrete approximation. In the theoretical results of the paper, we assume that the values at these ghost points are obtained from a linear combination of the 
first values of the solution close to the boundary and at the same time step, as follows.
\begin{numcases}{} 
    U_{j}^{n+1} = \sum_{k = -r}^0 a_k U_{k+j}^n, & $j\in \N,\ n\in \N$, \label{eqprincip}\\
    U_j^{n} = \sum_{k = 0}^{m-1} b_{j,k} U_k^n + g_j^n,  & $j\in\interval{-r}{-1},\ n\in \N$, \label{eqbord}\\
    U_j^0 = f_j, & $j \in \N$.\label{eqinit}
\end{numcases}
where $ m, r$ are integers, $f_j$ are approximations of the initial condition $f(x_j)$ and $g_j^n$ are numerical data related to the boundary datum $g$.
With the vector notation $U = (U_{-r}^n \cdots U_{m-1}^n)^T$ and $G = (g_{-r}^n \cdots g_{-1}^n)^T$, the boundary equation \eqref{eqbord} reads also equivalently as $BU=G$ with the following matrix
\begin{equation}\label{eq:Bdefinition}
    B \egdef \begin{pmatrix} 
    1 &  & 0 & -b_{-r,0} & \cdots & -b_{-r,m-1} \\
    & \ddots & & \vdots  & & \vdots \\
    0 & & 1 & -b_{-1,0} & \cdots & -b_{-1,m-1} \end{pmatrix}\in \mathcal M_{r,r+m}(\C).
\end{equation}

This class of boundary conditions encompasses the Dirichlet and Neumann extrapoliation procedures~\cite{Goldberg77}, but also the more general simplified inverse Lax-Wendroff procedure (see \cite{Vilar15}, \cite{Li17}, \cite{Dakin18} and Section~\ref{sec:SILWprocedure}). We will focus on these boundary conditions in our numerical examples. More specific treatments at the boundary  exist, as for example absorbing boundary conditions~\cite{Engquist77} and~\cite{Ehrhardt10}, or transparent boundary conditions~\cite{Arnold03} and~\cite{Coulombel19}, however, in general, they do not enter the present framework.


\subsection{Classic results about strong stability}

The GKS-stability theory (see the seminal paper by Gustafsson, Kreiss and Sundström \cite{Gustafsson72}) handles the discrete IBVP \eqref{eqprincip}-\eqref{eqbord}-\eqref{eqinit} with a zero initial data.
We refer the reader to the work by Wu  \cite{Wu95} and Coulombel \cite{Coulombel11} for more recent development on semigroup estimates. 
They extend a stability result for the discrete IBVP \eqref{eqprincip}-\eqref{eqbord}-\eqref{eqinit}, available for zero initial data, to the case of non-zero initial data. The corresponding notions of stability for the boundary problem makes use of 
the following discrete norms:
\begin{equation*}
    \|U_j\|_{\dt}^2 = \sum_{n=0}^{+\infty}\dt |U_j^n|^2 \text{\quad and \quad}\|U\|_{\dx,\dt}^2 = \sum_{n=0}^{+\infty} \sum_{j = -r}^{+\infty}\dt\dx |U_j^n|^2.
\end{equation*}
The latter norm is associated with the space $\ell^2(\{-r,\dots,-1\}\cup \N)$, denoted shortly $\ell^2$.
We are now in position to define the so-called strong stability, for zero initial data.
\begin{deff}[Strong stability]\label{defstabilite}
    The scheme \eqref{eqprincip}-\eqref{eqbord}-\eqref{eqinit} is strongly stable if, taking $(f_j)=0$, there exist $C>0$ and $\alpha_0$, such that for all $\alpha>\alpha_0$, for all boundary data $(g^n_j)$, for all $\dx>0$, for all $n\in \N$, the approximate solution  $(U_j^n)$ satisfies
    \begin{equation}\label{eq:stability}\sum_{j = -r}^{-1} \|e^{-\alpha n \dt} U_{j}\|_{\dt}^2  + \left (\dfrac{\alpha- \alpha_0}{\alpha\dt +1}\right )\|e^{-\alpha n \dt} U\|_{\dx,\dt}^2 \< C  \sum_{j=-r}^{-1} \|e^{-\alpha n \dt} g_j\|_\dt^2.
    \end{equation}
\end{deff}

We warn the reader that $\|e^{-\alpha n \dt} U_{j}\|^2_{\dt}$ 
is an abuse of notation to describe $\sum_{n=0}^{+\infty}\dt e^{-2\alpha n \dt} |U_j^n|^2$, and similarly for $\|e^{-\alpha n \dt} U\|_{\dx,\dt}^2$.

The following Kreiss theorem provides two necessary and sufficient conditions for the strong stability. 
We provide hereafter a condensed formulation of this theorem, obtained from \cite[Thm 5.1]{Gustafsson72} combined with \cite[Lem 13.1.4]{Gustafsson13} or with \cite[Def 2.23]{Gustafsson08}. It makes use of the notions of \emph{eigenvalue} and \emph{generalized eigenvalue} that will be defined later in \Cref{def:eigenvalue} and \Cref{def:geneigenvalue}.

\begin{thrm}[Kreiss]\label{thrmKreiss}
    The following statements are equivalent:
    \begin{enumerate}[label = (\roman*)]
        \item The scheme \eqref{eqprincip}-\eqref{eqbord}-\eqref{eqinit} is strongly stable in the sense of Definition~\ref{defstabilite}.
        \item The scheme \eqref{eqprincip}-\eqref{eqbord}-\eqref{eqinit} has neither eigenvalue nor generalized eigenvalue.
        \item The Uniform Kreiss-Lopatinskii Condition is satisfied.
    \end{enumerate}
\end{thrm}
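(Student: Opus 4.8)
Since this statement is a condensed restatement of the Gustafsson--Kreiss--Sundström theorem, the plan is to reprove it by the resolvent method rather than merely to cite it, connecting the three items through the discrete Laplace transform (equivalently the $z$-transform) in time. First I would take $(f_j)=0$ and transform \eqref{eqprincip}-\eqref{eqbord}-\eqref{eqinit}: the interior relation \eqref{eqprincip} becomes, for each fixed $z$ with $|z|>1$, the linear recurrence $z\widehat U_j=\sum_{k=-r}^0 a_k\widehat U_{j+k}$ in $j\in\N$, whose characteristic equation is exactly $z=\symbole(\xi)$ under the substitution $e^{i\xi}\mapsto\kappa$, i.e. a polynomial equation of degree $r$ in $\kappa$; here assumption~\ref{assumption:nondegenerate} keeps the degree from dropping and keeps $\kappa=0$ from being a root, and assumption~\ref{assumption:totallyupwind} is what makes the relevant recurrence one-sided so that ``admissible'' means simply ``square-summable as $j\to+\infty$''. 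The structural lemma I would establish next is that Cauchy-stability~\ref{assumption:cauchystab} forbids any root on $\S$ when $|z|>1$ (a root $\kappa=e^{i\xi}$ would give $|z|=|\symbole(\xi)|\<1$), so by continuity on the connected set $\U$ together with the count at $|z|\to\infty$ (all $r$ roots collapse to $0$), exactly $r$ roots lie in $\D$ for every $z\in\U$; the space of decaying solutions of the recurrence is thus $r$-dimensional, matching the $r$ ghost points.

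With this, I would introduce the Kreiss--Lopatinskii determinant $\Delta(z)$: expand a generic decaying solution over the $r$ modes $\kappa_i(z)^j$ (with Jordan corrections where roots collide), substitute into the transformed boundary relation $B\widehat U=\widehat G$, and read off the $r\times r$ linear system for the modal coefficients; $\Delta(z)$ is its determinant. It is holomorphic on $\U$, and --- after the roots $\kappa_i(z)$ are continued up to $\S$ and correctly sorted into outgoing/incoming families, which is the step where the group-velocity continuation of Kreiss (or Trefethen's reading of it) is needed --- it extends continuously to $\overline{\mathcal U}$. Granted this machinery, the equivalence (ii)$\Leftrightarrow$(iii) is essentially a matter of unwinding definitions: an eigenvalue is a $z\in\U$ with $\Delta(z)=0$, a generalized eigenvalue is a $z\in\S$ where the extended $\Delta$ vanishes, and the Uniform Kreiss--Lopatinskii Condition is precisely ``$\Delta\neq 0$ on $\overline{\mathcal U}$'', which by compactness upgrades to a uniform bound $|\Delta(z)|\>c>0$.

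The substantive part is (i)$\Leftrightarrow$(iii). For $\neg$(iii)$\Rightarrow\neg$(i): a genuine eigenvalue $z_0\in\U$ inverts to an exponentially growing solution with zero data, directly contradicting \eqref{eq:stability}; a generalized eigenvalue $z_0\in\S$ is treated by a quasi-mode (approximate eigenfunction) construction --- a wave packet localized in both the $j$ variable and the $n$-frequency near $z_0$ --- which makes the ratio of the two sides of \eqref{eq:stability} unbounded. For (iii)$\Rightarrow$(i): the target is a uniform resolvent/trace estimate of the shape $\|\widehat U(z)\|_{\ell^2}^2+\sum_{j=-r}^{-1}|\widehat U_j(z)|^2\<\tfrac{C}{|z|-1}\sum_{j=-r}^{-1}|\widehat G_j(z)|^2$ for $|z|>1$, obtained by bounding the modal coefficients through $\Delta(z)^{-1}$ (uniformly, by UKLC), bounding $\sum_j|\kappa_i(z)|^{2j}$ by $(1-|\kappa_i(z)|^2)^{-1}$, and controlling $1-|\kappa_i(z)|$ from below by a constant times $|z|-1$ away from the glancing frequency and by its square root near it --- this last, delicate, estimate is exactly where consistency~\ref{assumption:consistency} and the tangency of $\Gamma$ to the imaginary axis at $\theta=0$ are used. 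Parseval for the discrete Laplace transform, with $|z|=e^{\alpha\dt}$, then converts the resolvent estimate into \eqref{eq:stability}, the factor $(\alpha-\alpha_0)/(\alpha\dt+1)$ on the left matching the $1/(|z|-1)$ weight.

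I expect the main obstacle to be this resolvent estimate made uniform simultaneously in $\dx$ and in $z$ near $\S$; in the cleanest treatment it is carried out by constructing a discrete Kreiss symmetrizer --- a $z$-dependent Hermitian form, smooth up to the circle, positive on outgoing modes and negative on incoming ones --- whose existence near the glancing point requires putting the amplification near $(z,\kappa)=(1,1)$ into a suitable normal form. The root-sorting on $\S$ and the wave-packet construction for generalized eigenvalues are the other two places where genuine work, rather than bookkeeping, is needed.
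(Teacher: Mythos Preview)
The paper does not prove Theorem~\ref{thrmKreiss}: it is stated as a classical result, explicitly attributed to \cite[Thm~5.1]{Gustafsson72} combined with \cite[Lem~13.1.4]{Gustafsson13} (or \cite[Def~2.23]{Gustafsson08}), and is used as a black box throughout. Your proposal therefore does substantially more than the paper, by sketching an actual proof along the standard GKS lines (resolvent estimate, modal decomposition, Kreiss symmetrizer near glancing). As an outline of that classical proof your sketch is broadly correct and identifies the genuine difficulties in the right places: the continuation of $\Es$ up to $\S$, the quasi-mode construction for generalized eigenvalues, and the symmetrizer near the glancing point.

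Two small corrections. First, the equivalence (ii)$\Leftrightarrow$(iii) is not purely definitional in the paper's setup: the paper defines eigenvalues and generalized eigenvalues through $\DKLindep(z)=0$ together with $\ell^2$-membership of the associated $(\widetilde U_j(z))$ (Definitions~\ref{def:eigenvalue}--\ref{def:geneigenvalue}), whereas the Uniform Kreiss--Lopatinskii Condition is phrased as a uniform solvability bound; matching the two still requires the continuous extension of $\Es$ to $\overline{\U}$ and a compactness argument, which you do mention. Second, your attribution of the glancing-point estimate to consistency~\ref{assumption:consistency} is not accurate: the Kreiss theorem and its symmetrizer proof do not use consistency. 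What is needed at a glancing point $z_0\in\S\cap\Gamma$ is a non-degeneracy of the symbol (a nonvanishing second derivative of $|\gamma|$, or equivalently a block-structure condition on the recurrence matrix), not the first-order relation $\sum k a_k=-\lambda$. Consistency enters the paper only later, in Lemma~\ref{lemcoeffa0} and the explicit formula~\eqref{eq:explicitformula}, not in Theorem~\ref{thrmKreiss} itself.
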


The Uniform Kreiss-Lopatinskii Condition corresponds to the absence of zeros for the so-called Kreiss-Lopatinskii determinant (see later~\Cref{def:detKL} and~\cite{Gustafsson13}). These zeros
are identified to eigenvalues or to generalized eigenvalues in the sense of Definitions \ref{def:eigenvalue} and \ref{def:geneigenvalue} and correspond to modal instabilities.
Our numerical analysis of the strong stability of the discrete IBVP will be based on a geometrical study of the Kreiss-Lopatinskii determinant.


\section{Kreiss-Lopatinskii determinants}\label{sec:detKL}

In this section, we introduce the Kreiss-Lopatinskii determinant, define the intrinsic Kreiss-Lopatinskii determinant and construct an algebraic reformulation of it (see Theorem~\ref{thrm:rationalfraction} later). This explicit formula shows that it is holomorphic on $\{|z|>1\}$ and is independent of the roots of the associated characteristic equation. 
At last, by Corollary~\ref{thrm:nbrzerodet}, a numerical procedure based on the Theorem \ref{thrmKreiss} (Kreiss) gives a strategy to tackle the stability of the scheme.

\subsection{Stable subspace $\Es$ and matrix representation}

First, we assume~\ref{assumption:totallyupwind} and study the solutions to the interior equation:
\begin{equation}\label{eqprincip2}
U_{j}^{n+1} = \sum_{k = -r}^0 a_k U_{k+j}^n,\  j\in \N,\ n\in \N.
\end{equation}
To study this equation, the $\mathcal Z$-transform (see \cite[Lesson 40]{Gasquet13}) is applied. This transformation is defined for $(x_n)_{n\in \N}\in\ell^2(\N)$ such that $x_0 = 0$ and $z\in\U$ by $\widetilde{x}(z) = \sum_{n\>0} z^{-n}x_n$. The previous equation then becomes
\begin{equation}\label{eq:Ztransform}
    z\widetilde{U}_j(z) = \sum_{k = -r}^0 a_k\widetilde{U}_{j+k}(z),\ j\in\N,\ z\in\U.
\end{equation}

To solve the linear recurrence equation~\eqref{eq:Ztransform}, let us introduce the following characteristic equation where $z$ plays the role of a parameter and $\kappa$ is the indeterminate:
\begin{equation}\label{eq:eqcharac}
    z\kappa^r = \sum_{k = -r}^0 a_k \kappa^{r+k}.
\end{equation}
\clearpage
This equation is nothing but the discrete dispersion relation of the finite difference scheme~\eqref{eqprincip2}, with frequency parameter $\kappa$ in space and $z$ in time. It is formally obtained 
by looking for solutions to the interior equation \eqref{eqprincip2} having the form $U_j^n = z^n \kappa^j$.
\medskip

\medskip

In the spirit of a classic result by Hersh~\cite{Hersh63}, the following lemma indicates a property of separation for the roots with respect to the unit circle.

\begin{lemma}[Hersh]\label{lem:hersh}
    Assume \ref{assumption:nondegenerate} and \ref{assumption:totallyupwind}. For $z$ in the unbounded connected component of $\C\setminus \Gamma$,  all the roots of the characteristic equation \eqref{eq:eqcharac} are in $\D$.
\end{lemma}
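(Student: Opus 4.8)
The statement is a root-location (stability) result: when $z$ lies in the unbounded component of $\C\setminus\Gamma$, the $r$ roots $\kappa$ of the dispersion relation \eqref{eq:eqcharac} all satisfy $|\kappa|<1$. The natural strategy is a \emph{deformation / continuity argument} combined with Rouché's theorem. First I would rewrite \eqref{eq:eqcharac} as $P_z(\kappa)\egdef z\kappa^r - \sum_{k=-r}^0 a_k\kappa^{r+k}=0$; by \ref{assumption:nondegenerate} the constant term is $-a_{-r}\neq 0$, so $\kappa=0$ is never a root and $P_z$ genuinely has degree $r$ in $\kappa$ with leading coefficient $z\neq 0$ on $\U$ (the unbounded component certainly contains $\{|z|\text{ large}\}$). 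The key observation linking the roots to the symbol curve $\Gamma$ is that $P_z$ has a root on the unit circle $\{|\kappa|=1\}$ if and only if $z=\gamma(\theta)$ for some $\theta$, i.e. if and only if $z\in\Gamma$: indeed setting $\kappa=e^{i\theta}$ in \eqref{eq:eqcharac} gives $z=\sum_{k=-r}^0 a_k e^{ik\theta}=\gamma(\theta)$. Hence for $z$ in the complement of $\Gamma$, no root of $P_z$ meets the unit circle, and the number of roots inside $\D$ (counted with multiplicity) is a continuous integer-valued function of $z$, so it is \emph{constant on each connected component} of $\C\setminus\Gamma$.

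It then remains to evaluate this constant on the unbounded component. The cleanest way is to let $|z|\to\infty$: dividing \eqref{eq:eqcharac} by $z$, the roots of $\kappa^r = z^{-1}\sum_{k=-r}^0 a_k\kappa^{r+k}$ all tend to $0$ as $|z|\to\infty$ (by continuity of roots, or by a direct Rouché comparison of $z\kappa^r$ against the lower-order terms on a small circle $|\kappa|=\delta$). Therefore for $|z|$ large all $r$ roots lie in $\D$, and by the constancy just established, all $r$ roots lie in $\D$ for every $z$ in the unbounded connected component of $\C\setminus\Gamma$. This is exactly the claim.

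The main technical point — and the only place where some care is needed — is the continuity-of-roots / Rouché step: one must ensure that no root escapes to infinity or collides with the unit circle as $z$ varies within a fixed component. Escape to infinity is ruled out because the leading coefficient $z$ stays bounded away from $0$ on $\overline{\U}$ (and the unbounded component lies in $\overline{\U}$, or one can argue the statement is vacuous where it would meet $\D$); collision with $\{|\kappa|=1\}$ is ruled out precisely by the equivalence "root on the unit circle $\iff z\in\Gamma$" derived above. A convenient way to package both facts at once is to apply Rouché on the annulus, or simply to invoke the standard theorem that the roots of a monic (after normalization) polynomial depend continuously on its coefficients, together with the fact that the coefficient map $z\mapsto(\text{coefficients of }P_z/z)$ is continuous and never degenerates on the relevant region. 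I would also remark that this is the totally upwind ($p=0$) specialization of Hersh's general result, and that assumption \ref{assumption:cauchystab} (hence $\Gamma\subset\overline{\D}$) is not even needed for this particular lemma, only \ref{assumption:nondegenerate} and \ref{assumption:totallyupwind}, which is consistent with the hypotheses stated.
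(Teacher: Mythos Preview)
Your argument is correct and follows exactly the route the paper points to: the paper actually omits the proof of this lemma, noting only (in Remark~\ref{rem:hersh}) that ``the result can be proved by using Rouch\'e's theorem,'' which is precisely your deformation/continuity strategy via the equivalence ``$\kappa$-root on $\S \iff z\in\Gamma$'' together with the evaluation at $|z|\to\infty$.

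One small correction worth making when you write it out: the leading coefficient of $P_z$ is $z-a_0$, not $z$ (your $P_z$ contains the term $-a_0\kappa^r$ from $k=0$). Thus the degree may drop at $z=a_0$, and nothing in \ref{assumption:nondegenerate}--\ref{assumption:totallyupwind} alone prevents $a_0$ from lying in the unbounded component of $\C\setminus\Gamma$. Fortunately this does not affect your argument: the count of roots inside $\D$ is given by the argument-principle integral $\tfrac{1}{2\pi i}\oint_{\S} P_z'(\kappa)/P_z(\kappa)\,d\kappa$, which is continuous in $z$ whenever $P_z$ has no zero on $\S$, irrespective of what happens at infinity. So your ``escape to infinity'' paragraph can simply be replaced by this observation, and the proof goes through unchanged.
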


The proof of this result is omitted but may be found in~\cite{Hersh63}.

\begin{rem} Under the Cauchy-stability assumption~\ref{assumption:cauchystab}, the inclusion $\Gamma\subset \overline{\D}$ is known. From there, it follows that the unbounded connected component of $\C\setminus\Gamma$ contains the whole set~$\U$ so that a weaker form of the lemma is available for considering $z \in \U$ only.
If in addition, the considered scheme is also \emph{dissipative}, that is if its symbol $\symbole$ satisfies
\[|\symbole(\xi)|\< 1 - \delta |\xi|^{2s},\quad \xi \in [-\pi,\pi],\]
for some $\delta>0$ and an integer $s\in \N^*$ independent of $\xi$,
then the same separation result is available for $z\in\overline{\U}\setminus\{1\}$. The reason for this property is that one has $\S\cap\Gamma=\{1\}$.
\end{rem}

Lemma~\ref{lem:hersh} (Hersh) is illustrated in Figure~\ref{fig:hersh}.
The first two columns correspond to the Hersh lemma and the third one describes the possible configuration for $z\in\Gamma\cap\S$, typically not meeting the assumptions. This case will be the object of a subsequent discussion.

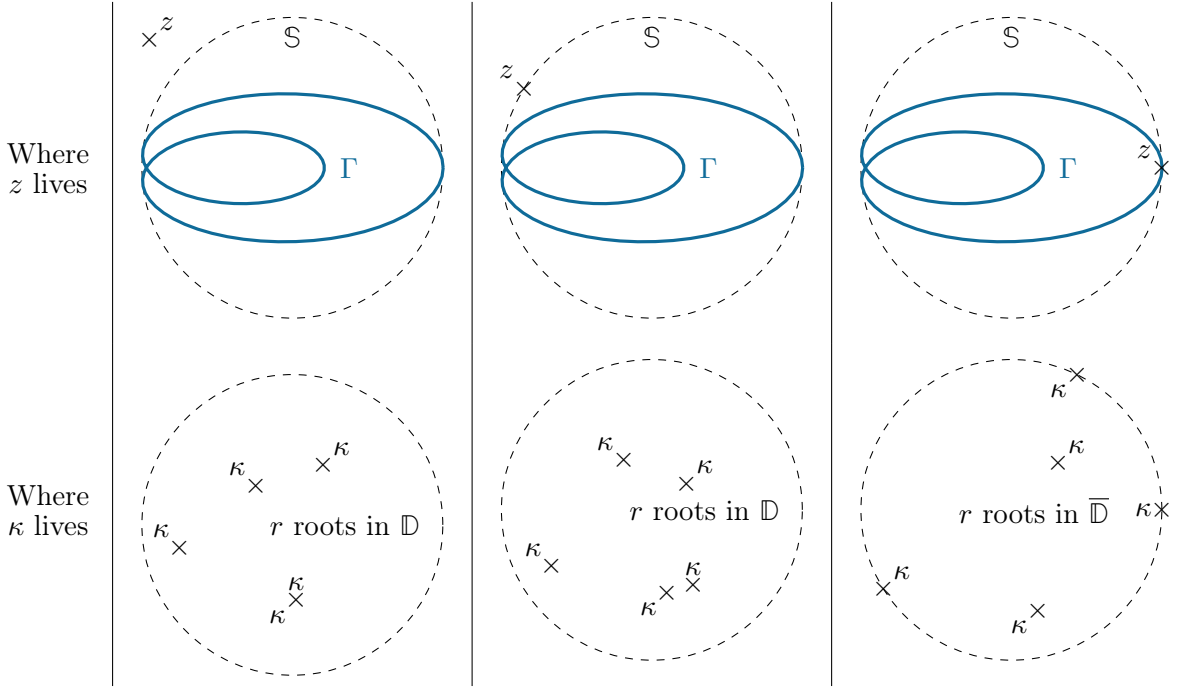
\begin{figure}
    \centering
    \begin{tabular}{c|c|c|c}
        \begin{tikzpicture}
            \draw[white, fill] (-0.1,-2.2) rectangle (0.1,2.2);
            \draw  (0,0.2) node{Where};
            \draw  (0,-0.2) node{$z$ lives};
        \end{tikzpicture}
        &\begin{tikzpicture}
            \draw[white, fill] (-2.2,-2.2) rectangle (2.2,2.2);
            \draw[dashed] (0,0) circle (2);
            \draw[very thick, PLB] plot[domain=0:360,samples=100] ({2.193*(0.72*cos(2*\x) +0.36*cos(\x) - 0.08)-0.188} , {2*(-0.36*sin(2*\x) - 0.18*sin(\x))} );
            \draw  (-1.9,1.7) node{$\times$} node[above right]{$z$};
            \draw  (0.5,0) node[right,PLB]{$\Gamma$};
            \draw  (0,2) node[below]{$\S$};
        \end{tikzpicture}
        &
        \begin{tikzpicture}
            \draw[white, fill] (-2.2,-2.2) rectangle (2.2,2.2);
            \draw[dashed] (0,0) circle (2);
            \draw[very thick, PLB] plot[domain=0:360,samples=100] ({2.193*(0.72*cos(2*\x) +0.36*cos(\x) - 0.08)-0.188} , {2*(-0.36*sin(2*\x) - 0.18*sin(\x))} );
            \draw  (-1.7,1.0535) node{$\times$} node[above left]{$z$};
            \draw  (0.5,0) node[right,PLB]{$\Gamma$};
            \draw  (0,2) node[below]{$\S$};
        \end{tikzpicture}
        &
        \begin{tikzpicture}
            \draw[white, fill] (-2.2,-2.2) rectangle (2.2,2.2);
            \draw[dashed] (0,0) circle (2);
            \draw[very thick, PLB] plot[domain=0:360,samples=100] ({2.193*(0.72*cos(2*\x) +0.36*cos(\x) - 0.08)-0.188} , {2*(-0.36*sin(2*\x) - 0.18*sin(\x))} );
            \draw  (2,0) node{$\times$} node[above left]{$z$};
            \draw  (0.5,0) node[right,PLB]{$\Gamma$};
            \draw  (0,2) node[below]{$\S$};
        \end{tikzpicture}
        
        \\
        \begin{tikzpicture}
            \draw[white, fill] (-0.1,-2.2) rectangle (0.1,2.2);
            \draw  (0,0.2) node{Where};
            \draw  (0,-0.2) node{$\kappa$ lives};
        \end{tikzpicture} &
\begin{tikzpicture}
    \draw[dashed] (0,0) circle (2);
    \draw  (-0.4863821504576905,0.5256329266344337) node{$\times$} node[above left]{$\kappa$};
    \draw  (0.4077563762011518,0.7969276234786205) node{$\times$} node[above right]{$\kappa$};
    \draw  (-1.5,-0.3) node{$\times$} node[above left]{$\kappa$};
    \draw  (0.05,-1) node{$\times$} node[below left]{$\kappa$}
    node[above]{$\kappa$};
    \draw  (0.7,0) node{$r$ roots in $\D$} ;
\end{tikzpicture}
&
\begin{tikzpicture}
    \draw[white, fill] (-2.2,-2.2) rectangle (2.2,2.2);
    \draw[dashed] (0,0) circle (2);
    \draw  (-0.3737011056381775,0.6646464581885967) node{$\times$} node[above left]{$\kappa$};
    \draw  (0.46050826642164544,0.3508791988777962) node{$\times$} node[above right]{$\kappa$};
    \draw  (-1.3332841067875879,-0.7465188753809819) node{$\times$} node[above left]{$\kappa$};
    \draw  (0.2,-1.1) node{$\times$} node[below left]{$\kappa$};
    \draw  (0.55,-1) node{$\times$} node[above]{$\kappa$};
    \draw  (0.7,0) node{$r$ roots in $\D$} ;
\end{tikzpicture}

&
\begin{tikzpicture}
    \draw[white, fill] (-2.2,-2.2) rectangle (2.2,2.2);
    \draw[dashed] (0,0) circle (2);
    \draw  (0.8718,1.8) node{$\times$} node[below left]{$\kappa$};
    \draw  (0.6221277274892463,0.6221277274892463) node{$\times$} node[above right]{$\kappa$};
    \draw  (-1.7,-1.0535) node{$\times$} node[above right]{$\kappa$};

    \draw  (2,0) node{$\times$} node[left]{$\kappa$};
    \draw  (0.35,-1.34360252969993665) node{$\times$} node[below left]{$\kappa$};
    \draw  (0.3,0) node{$r$ roots in $\overline{\D}$} ;
\end{tikzpicture}
\end{tabular}
\caption{Illustration of Lemma \ref{lem:hersh}: case $|z|>1$ (first column), case $|z|=1$ and $z \notin \Gamma$ (second column) and case $z \in \Gamma$ where Lemma \ref{lem:hersh} does not hold (third column).}\label{fig:hersh}
\end{figure}

\begin{rem}\label{rem:hersh}Setting the assumption~\ref{assumption:totallyupwind} aside, meaning with a nonzero number $p$ of right points, the more general form of the Hersh lemma states that for any convenient value of $z$, there are exactly~$r$ roots (with multiplicity) inside the open unit disk, exactly $p$ roots (with multiplicity) outside the unit disk and no root on the unit circle. The result can be proved by using Rouché's theorem.
\end{rem}

For $|z|>1$, we denote $\Es$ the linear subspace of solutions to \eqref{eq:Ztransform} living in $\ell^2$ (the $\ell^2$ space with indices between $-r$ and $+\infty$). By Lemma~\ref{lem:hersh} (Hersh), the space $\Es$ is generated by the following $r$ vectors:

\begin{equation}\label{eq:basis}
     \begin{pmatrix}\kappa_i^{-r} \\ \vdots \\ \kappa_i^{-1} \\ 1 \\ \kappa_i \\\kappa_i^2 \\ \kappa_i^3 \\ \vdots \end{pmatrix}, \begin{pmatrix}-r\kappa_i^{-r} \\ \vdots \\ -\kappa_i^{-1} \\ 0 \\ \kappa_i \\ 2\kappa_i^2 \\ 3\kappa_i^3 \\ \vdots \end{pmatrix}, \dots,\begin{pmatrix}(-r)^{\mult_i-1}\kappa_i^{-r} \\ \vdots \\ (-1)^{\mult_i-1}\kappa_i^{-1} \\ 0 \\ \kappa_i \\ 2^{\mult_i-1}\kappa_i^2 \\ 3^{\mult_i-1}\kappa_i^3 \\ \vdots \end{pmatrix},\quad  i = 1,\dots, \nbrmult\end{equation}
where $\kappa_1, \dots, \kappa_\nbrmult$ of multiplicity $\mult_1, \dots, \mult_\nbrmult$ are the solutions to \eqref{eq:eqcharac}, with $\mult_1 + \cdots + \mult_\nbrmult = r$. 
(We omit the $z$-dependence of $\kappa(z)$ for the sake of readability.)

\medskip

\textbf{Notation.} We denote  $K_{i,j}(z) \in \mathcal M_{j-i+1,r}(\C)$ the matrix where we put in columns the extraction of all the lines between $i$ and $j$ (included) of the previous vectors, where $-r \< i \<j$.

\begin{rem}\label{ex:kappadistinct}
    For $r=2$, if the solutions to \eqref{eq:eqcharac} are $\kappa_1(z) \neq \kappa_2(z)$, then there are exactly two roots with multiplicity~1. The solutions to \eqref{eq:Ztransform} can be written
    $\widetilde{U}_j(z) = \alpha_1\kappa_1(z)^j + \alpha_2 \kappa_2(z)^j,$
    and we have
\[
    K_{-2, 2}(z) = \begin{pmatrix}\kappa_1(z)^{-2}& \kappa_2(z)^{-2} \\\kappa_1(z)^{-1} &\kappa_2(z)^{-1}\\ 1 & 1 \\ \kappa_1(z) & \kappa_2(z)\\\kappa_1(z)^2& \kappa_2(z)^2  \end{pmatrix}.\]
\end{rem}

\begin{rem}\label{ex:kapparacinedouble}
    Still for $r=2$, if the solution to \eqref{eq:eqcharac} now is $\kappa(z)$ with multiplicity 2, then the solutions to \eqref{eq:Ztransform} can be written  $\widetilde{U}_j(z) = (\alpha_1+\alpha_2 j)\kappa(z)^j$, and we have
\[K_{0,3}(z) = \begin{pmatrix} 1 & 0\\ \kappa(z) & \kappa(z)\\ \kappa(z)^2 & 2\kappa(z)^2 \\ \kappa(z)^3 & 3 \kappa(z)^3 \end{pmatrix}.\]
\end{rem}

We raise awareness of the dependence on $z$ and of the continuity issues because the map $z \mapsto K_{i,j}(z)$ is not continuous whereas the set of roots of \eqref{eq:eqcharac} is a continuous mapping with respect to $z$. Indeed, the root curves $(\kappa_j(z))_{j}$ can intersect, when a multiple root occurs. For example, for $r=2$, if there is $(z_n)_{n\in \N}\subset  \U$ with $\kappa_1(z_n) \neq \kappa_2(z_n)$ which converge to $z_{\infty}\in \U$ such that $\kappa_1(z_{\infty}) = \kappa_2(z_{\infty})$ a double root, then we have, for $j = 1$ and $j = 2$,
$$\kappa_j(z_n) \xrightarrow[n\to \infty]{} \kappa_j(z_{\infty})$$ but $$K_{0,3}(z_n) = \begin{pmatrix} 1 & 1 \\ \kappa_1(z_n) & \kappa_2(z_n) \\ \kappa_1^2(z_n) & \kappa_2^2(z_n) \\ \kappa_1^3(z_n) & \kappa_2^3(z_n) \end{pmatrix} \cancel{\xrightarrow[n\to \infty]{}} ~K_{0,3}(z_{\infty}) = \begin{pmatrix} 1 & 0\\ \kappa_1(z_{\infty}) & \kappa_1(z_{\infty}) \\ \kappa_1^2(z_{\infty}) & 2 \kappa_1^2(z_{\infty}) \\ \kappa_1^3(z_{\infty}) & 3 \kappa_1^3(z_{\infty}) \end{pmatrix}.$$

Consequently, the considered basis \eqref{eq:basis} of $\Es$ does not generally define a continuous mapping with respect to $z$. Nevertheless, $\Es$ is a continuous and even holomorphic vector bundle over $\U$ as it is discussed in \cite[Thm 4.3]{Coulombel11}. This author proves in addition that this vector bundle $\Es$ can even be continuously extended over $\overline{\U}$, thus considering $z\in\S$ as well 
(see also \cite{Metivier04} for a similar property for the hyperbolic-parabolic PDE case).
The main point therein is that for some $z_0 \in \S$, there may exist one (or several) root $\kappa_0(z_0)$ of \eqref{eq:eqcharac} on $\S$, because Hersh lemma does not hold anymore. This situation is depicted on the third column of Figure~\ref{fig:hersh} and the different cases that may occur will be explained in Section~\ref{sec:numericalprocedureeig}.

In the case of a totally upwind scheme, it is easy to extend the space $\Es$ because it is the linear space generated by the $r$ roots of \eqref{eq:eqcharac} with polynomial terms for multiplicity. Indeed, $\kappa(z)$ can be defined for all $z\in \overline{\U}$ by continuity of $\kappa(z)$ for $z \in \U$.
The space $\Es$ still is of dimension $r$ and we extend the notation $K_{i,j}(z)$ for $z$ on $\S$. But the difficulty is to prove the continuity of $\Es$ after the extension, it follows from the existence of a K-symmetrizer and is obtained e.g. in \cite[Thm 4.3]{Coulombel11}. As previously observed,  $K_{i,j}(z)$ is generally not continuous with respect to $z$.

We can summarize the discussion in the following theorem.

\begin{thrm}[\cite{Coulombel11}]
    Under assumptions \ref{assumption:nondegenerate}, \ref{assumption:totallyupwind} and \ref{assumption:cauchystab}, the space $\Es$ is a holomorphic vector bundle over $\U$ and can be extended in a unique way as a continuous vector bundle over $\overline{\U}$.
\end{thrm}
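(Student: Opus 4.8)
The plan is to realize, for every $z\in\overline\U$, the fibre $\Es$ as the range of one explicit injective linear map built from the companion matrix of the characteristic equation~\eqref{eq:eqcharac}, and to check that this map is holomorphic in $z$ over $\U$ and continuous over $\overline\U$. We may assume $r\>1$ (the bundle having rank $0$, hence being trivial, otherwise). One first observes that $a_0=\frac1{2\pi}\int_0^{2\pi}\gamma(\xi)\,d\xi$ has modulus $|a_0|<1$: indeed $\gamma$ is a trigonometric polynomial of degree $r\>1$ by~\ref{assumption:nondegenerate}, hence non-constant, whereas $|\gamma|\<1$ on $\R$ by~\ref{assumption:cauchystab}, so the triangle inequality for the average is strict. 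Therefore $z\neq a_0$ on a whole neighbourhood of $\overline\U$, and rewriting the interior recurrence~\eqref{eqprincip2} in state form $V_{j+1}=M(z)V_j$, with $V_j=(\widetilde U_j,\dots,\widetilde U_{j+r-1})^{T}$, produces a companion matrix $M(z)$ whose last row is $\bigl(\tfrac{a_{-r}}{z-a_0},\dots,\tfrac{a_{-1}}{z-a_0}\bigr)$; its entries are holomorphic near $\overline\U$, and $(z-a_0)\det(\kappa I-M(z))=z\kappa^r-\sum_{k=-r}^{0}a_k\kappa^{r+k}$, so the eigenvalues of $M(z)$ are exactly the roots $\kappa_i(z)$ of~\eqref{eq:eqcharac}.

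Over $\U$, \Cref{lem:hersh} applies — assumption~\ref{assumption:cauchystab} places $\U$ in the unbounded component of $\C\setminus\Gamma$ — and gives that every eigenvalue of $M(z)$ lies in $\D$. Hence every solution of~\eqref{eqprincip2} decays geometrically, so $\mathcal E^s(z)$ is exactly the range of the injective map $\iota_z\colon\C^r\to\ell^2(\{-r,-r+1,\dots\})$ sending an initial state $v=(\widetilde U_{-r},\dots,\widetilde U_{-1})^{T}$ to the corresponding solution sequence, namely $(\iota_z v)_j=e_1^{T}M(z)^{\,j+r}v$. Each matrix entry of $\iota_z$ is holomorphic in $z$ (a polynomial in the entries of $M(z)$), and a standard compactness argument gives a uniform geometric bound $\sup_{z\in K}\|M(z)^{\ell}\|\<C_K\rho_K^{\,\ell}$ with $\rho_K<1$ on every compact $K\subset\U$; consequently $\iota_z$ is the uniform-on-compacts limit of its holomorphic partial-sum operators into $\ell^2$, hence holomorphic as a map $\U\to\mathcal L(\C^r,\ell^2)$. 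As $\iota_z$ is injective with $r$-dimensional (hence closed) range, the frame $\{\iota_z e_1,\dots,\iota_z e_r\}$ is global and holomorphic, so $\Es$ is a holomorphic — in fact holomorphically trivial — subbundle of $\U\times\ell^2$.

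The same formula $(\iota_z v)_j=e_1^{T}M(z)^{\,j+r}v$ still defines, for every $z\in\overline\U$, an injective linear map $\iota_z\colon\C^r\to E$, where now $E=\C^{\{-r,-r+1,\dots\}}$ carries the product (pointwise) topology; each of its components is holomorphic near $\overline\U$, so $z\mapsto\iota_z$ is continuous from $\overline\U$ to $\mathcal L(\C^r,E)$, and the frame $\{\iota_z e_1,\dots,\iota_z e_r\}$ — linearly independent at every $z$, since its first $r$ components form the identity matrix — is global and continuous. Thus $\Es:=\Ran\iota_z$ is a rank-$r$ continuous subbundle of $\overline\U\times E$; on $\U$, and more generally on $\S\setminus\Gamma$ where \Cref{lem:hersh} still confines the eigenvalues of $M(z)$ to $\D$, one has $\iota_z(\C^r)\subset\ell^2$, so this honestly extends the bundle of the previous paragraph. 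Equivalently, for every $N$ the finite extraction $z\mapsto\Ran K_{-r,N}(z)\in\mathrm{Gr}(r,\C^{N+r+1})$ is continuous on $\overline\U$, although the particular frame~\eqref{eq:basis} is not (collisions of roots). Uniqueness is the usual fact that a bundle defined over the dense open set $\U$ admits at most one continuous extension to $\overline\U$: the fibre at $z_0\in\S$ must equal $\lim_{z\to z_0}\mathcal E^s(z)$, and this limit is unique because the Grassmannians of the finite extractions are Hausdorff.

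I expect the only genuinely delicate point to be the behaviour on $\Gamma\cap\S$: there~\eqref{eq:eqcharac} may have roots on the unit circle, so the associated geometric-sequence generators of $\mathcal E^s(z)$ diverge in $\ell^2$ as $z\to z_0$ and the $\ell^2$ realization of the bundle collapses. The point of the argument above is precisely that replacing $\ell^2$ by the product topology on $E$ — equivalently, working with the truncations $K_{-r,N}(z)$ — makes the companion-matrix formula deliver a continuous frame for free, simultaneous collisions of roots being absorbed into the iterates of $M(z)$. For a scheme that is not totally upwind this shortcut is unavailable (the companion matrix then has eigenvalues on both sides of $\S$), and one must instead rely on the existence of a Kreiss symmetrizer, as carried out in~\cite[Thm~4.3]{Coulombel11}.
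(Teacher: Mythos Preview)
Your argument is correct, and it is a genuinely different route from the paper's. The paper does not actually prove this theorem: the surrounding discussion defers the holomorphy and the continuous extension to the existence of a Kreiss symmetrizer, citing \cite[Thm~4.3]{Coulombel11} (and \cite{Metivier04} for the PDE analogue). It only remarks that in the totally upwind case the extension is ``easy'' because $\Es$ coincides with the full solution space, without turning that remark into a proof.

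Your companion-matrix construction makes that remark precise and self-contained. The key leverage is exactly the totally upwind hypothesis: since \emph{all} $r$ roots of~\eqref{eq:eqcharac} lie in $\D$ for $z\in\U$, the stable subspace $\Es$ equals the whole $r$-dimensional solution space, so it is globally parametrized by the initial $r$ values and inherits holomorphy directly from the entries of $M(z)$. This bypasses the symmetrizer machinery entirely. Your switch from $\ell^2$ to the product topology on $E$ for the extension to $\overline\U$ is the right move and matches how the paper actually uses the bundle downstream (only through the finite truncations $K_{i,j}(z)$). A small bonus worth pointing out: you obtain $|a_0|<1$ from~\ref{assumption:nondegenerate} and~\ref{assumption:cauchystab} alone, which is precisely what the theorem's hypotheses allow, whereas the paper's later \Cref{lemcoeffa0} invokes the consistency assumption~\ref{assumption:consistency} as well. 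As you note in your last paragraph, the shortcut collapses once $p>0$, since the companion iterates then carry both stable and unstable directions and one must separate them --- which is where the symmetrizer argument of \cite{Coulombel11} becomes necessary.
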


Moreover, in the more general case where there are $p$ right points, the extension of $\Es$ is not so easy to define because the $r$ roots that come from the inside of the unit open disk must be selected. Indeed, if there is some $\kappa_0(z_0)$ on the unit circle, one has to know if the root comes from the outside or the inside of the unit disk when $z$ tends to $z_0$ from the outside. Worse, it is possible to have a multiple root on the unit circle with some come from the inside of the unit disk and others from the outside.


\subsection{Intrinsic Kreiss-Lopatinskii determinant}

Now, let us consider the $\mathcal Z$-transformed version of the boundary condition \eqref{eqbord}, that is, for $j$ between $-r$ and $-1$, 
\begin{equation}\label{eqbordZtransform}
    \widetilde{U}_j(z) = \sum_{k = 0}^{m-1} b_{j,k} \widetilde{U}_k(z) + \widetilde{g_j}(z). 
\end{equation}

Injecting the fundamental solutions to $\Es$ into \eqref{eqbordZtransform}, we obtain a system of $r$ equations where the coefficients are the scalar unknowns. They are the coefficients of the solution to~\eqref{eqbordZtransform} written in the basis~\eqref{eq:basis} of $\Es$. 

\begin{rem} For $r=2$ and a given value of $z$ (we skip for convenience the dependence in $z$ hereafter), if $\kappa_1 \neq \kappa_2$ so that the solution to~\eqref{eqbordZtransform} has the form $\alpha_1\kappa_1^j + \alpha_2 \kappa_2^j$, then that solution is constrained by the following two scalar equations:
\[
    \begin{cases}
        \alpha_1\kappa_1^{-2} + \alpha_2 \kappa_2^{-2} = \sum_{k=0}^{m-1} b_{-2,k} (\alpha_1\kappa_1^k + \alpha_2 \kappa_2^k) + \widetilde{g_{-2}},
        \\
        \alpha_1\kappa_1^{-1} + \alpha_2 \kappa_2^{-1} = \sum_{k=0}^{m-1} b_{-1,k} (\alpha_1\kappa_1^k + \alpha_2 \kappa_2^k) + \widetilde{g_{-1}}.
    \end{cases}
\]
The matricial form of that system reads
\[
    \underbrace{\begin{pmatrix} 1 & 0 & -b_{-2,0} & \cdots & -b_{-2,m-1} \\ 0 & 1 & -b_{-1,0} & \cdots & -b_{-1,m-1} \end{pmatrix}}_{B} \begin{pmatrix} \kappa_1^{-2} & \kappa_2^{-2} \\  \kappa_1^{-1} & \kappa_2^{-1} \\ 1 & 1 \\ \kappa_1 & \kappa_2 \\ \kappa_1^2 & \kappa_2^2 \\  \vdots & \vdots  \\ \kappa_1^{m-1} & \kappa_2^{m-1}   \end{pmatrix} \begin{pmatrix} \alpha_1 \\ \alpha_2 \end{pmatrix} = \begin{pmatrix} \widetilde{g_{-2}} \\ \widetilde{g_{-1}}
    \end{pmatrix}.
\]
The injectivity, whence invertibility, of the boundary condition is thus directly related to the property $\det BK_{-2,m-1}(z)\neq 0$, where $BK_{-2,m-1}(z)\in \mathcal M_{2,2}(\C)$.
\end{rem}

\begin{deff}[Kreiss-Lopatinskii determinant]\label{def:detKL}
    The \emph{Kreiss-Lopatinskii determinant} is the complex-valued function defined for $|z|\>1$ by
    \begin{equation*}
        \DKL(z) \egdef \det BK_{-r,m-1}(z).
    \end{equation*}
\end{deff}

Before giving the definition let us motivate the \emph{intrinsic Kreiss-Lopatinskii determinant} $\DKLindep$ by the following informal discussion.
The above Kreiss-Lopatinskii determinant is actually not well defined until we order in some way the roots $(\kappa_j(z))_{j=1,\dots,r}$ of~\eqref{eq:eqcharac}. There are two points to notice. The first one is related to crossing roots, already discussed after Remark~\ref{ex:kapparacinedouble}. The second one is that, outside crossing cases, being given any choice for the ordering of the roots (and thus of the vectors of the basis~\eqref{eq:basis} for the vector bundle), there is in general no chance to obtain a holomorphicity property for the components of the matrix $K_{-r,m-1}(z)$ over $\U$. For example, even the roots of $X^2-z$ are not holomorphic w.r.t $z$ because of the logarithm determination. On the other side, any symmetric functions of the roots $(\kappa_j(z))_{j=1,\dots,r}$ however are holomorphic because they can be obtained directly in terms of the coefficients of the polynomial~\eqref{eq:eqcharac}. Therefore, except for crossing roots, the same holds for the quantity $\DKL(z)$ since the matrix $B$ is constant and the determinant itself is a symmetric function.

It is now known that the space $\Es$ is a holomorphic vector bundle over $\U$, continuous over $\overline{\U}$, and thus we should expect the same for $\DKL$. A very natural way to reach that property and go beyond the last difficulties consists in dividing $\DKL$ by the quantity $\det K_{0,r-1}(z)$. In this manner, the same permutation or combination of the vectors of the basis~\eqref{eq:basis} is involved for both computations.

\begin{deff}[Intrinsic Kreiss-Lopatinskii determinant]
    The \emph{intrinsic Kreiss-Lopatinskii determinant} is the complex-valued function defined for $|z|\>1$ by:
\begin{equation}\label{eq:dklintrinsic}
    \DKLindep(z)=\dfrac{\DKL(z)}{\det K_{0,r-1}(z)}.
\end{equation}
\end{deff}

To conclude with these definitions, let us state a little more about the \emph{Uniform Kreiss-Lopatinskii Condition}. With the above notations and additionally to the invertibility of $ BK_{-r,m-1}(z)$, it corresponds to the existence of a constant $C>0$ such that for any $z\in\overline{\U}$, any $U\in\Es$ solution to~\eqref{eqbordZtransform} satisfies the uniform estimate
\[
    \|\widetilde{U}\| \< C \|\widetilde{g}\|.
\]
From the Parseval identity for the $\mathcal{Z}$-transform, this inequality yields directly the first necessary half-part of the strong stability estimate~\eqref{eq:stability}. We refer the reader to~\cite{Gustafsson13} for a more detailed presentation.


\subsection{Main results} \label{sec:result}

Theorem~\ref{thrm:rationalfraction} is our main theoretical result. It yields an explicit formulation of the intrinsic Kreiss-Lopatinskii determinant and therefore 
describes its properties. Namely, as a function of $z$, this determinant $\DKLindep$ is holomorphic on $\U$, is continuous on $\overline{\U}$ and depends on $\Es$ but not on the choice of a basis (what justifies the \emph{intrinsic} denomination of that quantity).

\begin{thrm}[Explicit formula of the intrinsic Kreiss-Lopatinskii determinant]\label{thrm:rationalfraction}
    Assume \ref{assumption:nondegenerate}, \ref{assumption:totallyupwind}, \ref{assumption:cauchystab} and \ref{assumption:consistency}. The intrinsic Kreiss-Lopatinskii determinant is given, for $z \in \overline{\U}$, by
    \begin{equation}\label{eq:explicitformula}
        \DKLindep(z) = (-1)^{r(m-r)}\det C(z)\left (\dfrac{a_{-r}}{a_0 - z}\right )^{m-r}\end{equation}
    where $\det C(z)$ is a constructible polynomial of $z$ depending only on the coefficients $(a_j)_{j=-r}^0$ and on the components of $B$.
\end{thrm}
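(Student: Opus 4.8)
The plan is to obtain the formula by a sequence of explicit row/column operations on the $r\times r$ matrix $BK_{-r,m-1}(z)$ and on the $r\times r$ matrix $K_{0,r-1}(z)$, exploiting the structure of $B$ from \eqref{eq:Bdefinition} together with the characteristic equation \eqref{eq:eqcharac}. First I would write $B=\begin{pmatrix} I_r & -\mathfrak b\end{pmatrix}$ where $\mathfrak b=(b_{j,k})\in\mathcal M_{r,m}(\C)$, so that $BK_{-r,m-1}(z) = K_{-r,-1}(z) - \mathfrak b\, K_{0,m-1}(z)$. The key observation is that every row of $K_{0,m-1}(z)$ with index $\ell\ge r$ is an explicit linear combination of the rows with indices $0,\dots,r-1$: this is precisely the content of the recurrence \eqref{eq:Ztransform}, equivalently the characteristic relation \eqref{eq:eqcharac}, which allows one to express $K_{\ell,\ell}(z)$ for $\ell\ge r$ in terms of $K_{0,r-1}(z)$ with coefficients that are rational functions of $z$ (because $a_{-r}\neq 0$ by \ref{assumption:nondegenerate}, the recurrence can be run forward, introducing a division by $a_{-r}$, or run the other way introducing $a_0-z$; the consistency \ref{assumption:consistency} guarantees $a_0-z\neq 0$ on $\overline{\U}$ since $z=a_0$ would force $z$ inside $\overline{\D}$, contradicting $|z|\ge 1$ unless... — this non-vanishing point must be checked carefully, see below). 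Collapsing all rows of index $\ge r$ in $K_{0,m-1}(z)$ onto the block $K_{0,r-1}(z)$ turns $\mathfrak b\,K_{0,m-1}(z)$ into $\widehat{\mathfrak b}(z)\,K_{0,r-1}(z)$ for an explicit matrix $\widehat{\mathfrak b}(z)$ whose entries are polynomials in $z$ divided by powers of $(a_0-z)$; the total power is $m-r$, matching the exponent in \eqref{eq:explicitformula}.

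Next I would handle $K_{-r,-1}(z)$, the "ghost" rows of negative index. Running the recurrence \eqref{eq:eqcharac} backwards (this is where the division by $a_{-r}$ appears) expresses each row $K_{-\ell,-\ell}(z)$ for $\ell=1,\dots,r$ as a linear combination of rows $K_{0,r-1}(z)$ with coefficients polynomial in $z$ and divided by powers of $a_{-r}$; collecting these gives $K_{-r,-1}(z) = L(z)\,K_{0,r-1}(z)$ for an explicit $r\times r$ matrix $L(z)$ whose determinant I would compute directly — it should be a monomial in $1/a_{-r}$ up to sign, essentially $(\pm a_{-r})^{-r}$ times something, because $L(z)$ is (anti)triangular with the relevant corner entries being powers of $-1/a_{-r}$ (the coefficient of $\kappa^0=\kappa^{r}\cdot\kappa^{-r}$ term). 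Actually, since $K_{-r,-1}$ just reads off the entries $\kappa_i^{-r},\dots,\kappa_i^{-1}$, and the characteristic relation gives $\kappa^{-1}$ in terms of lower powers after dividing by $a_{-r}\kappa^{r}\cdot$(stuff), a cleaner route is to note $z\kappa^{r}=\sum_{k=-r}^0 a_k\kappa^{r+k}$, i.e. $z\kappa^{-1}\cdot\kappa^{r+1} = \cdots$, letting one solve for $\kappa^{-1}$; iterating reaches $\kappa^{-r}$. With both reductions in hand,
\begin{equation*}
BK_{-r,m-1}(z) = \bigl(L(z) - \widehat{\mathfrak b}(z)\bigr)\,K_{0,r-1}(z),
\end{equation*}
so that
\begin{equation*}
\DKLindep(z) = \frac{\det BK_{-r,m-1}(z)}{\det K_{0,r-1}(z)} = \det\bigl(L(z)-\widehat{\mathfrak b}(z)\bigr),
\end{equation*}
and the $K_{0,r-1}(z)$ factor — the source of all the discontinuity and non-holomorphy discussed after Remark \ref{ex:kapparacinedouble} — cancels cleanly, proving along the way that $\DKLindep$ extends holomorphically to $\overline{\U}$. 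Clearing the denominators $(a_0-z)^{m-r}$ out of $\widehat{\mathfrak b}(z)$ and pulling out the sign $(-1)^{r(m-r)}$ (from transpositions/row reorderings in the collapse) and the scalar $a_{-r}^{m-r}$ (from $\det L$ against the leading coefficient of the recurrence) yields exactly \eqref{eq:explicitformula} with $C(z)$ the remaining polynomial matrix.

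The main obstacle I anticipate is twofold. Combinatorially, it is bookkeeping: getting the exact power $m-r$, the exact sign $(-1)^{r(m-r)}$, and the exact scalar $a_{-r}^{m-r}$ right requires tracking every division and every row permutation through the two collapses, and the multiplicity/Jordan-block structure of the roots (the polynomial-in-$j$ factors $j^{\beta-1}$ in the basis \eqref{eq:basis}) means one cannot simply argue with the $\kappa_i$ as if they were distinct — one must check that the reduction coefficients depend only on the $a_k$ and $z$ (they do, since the row-recurrence \eqref{eq:Ztransform} is satisfied by every column of the basis, confluent or not). Conceptually, the delicate point is the non-vanishing of $a_0-z$ on $\overline{\U}$: one must verify that $z=a_0$ cannot occur, which follows from \ref{assumption:cauchystab} and \ref{assumption:consistency} — indeed $a_0=\gamma(0)-\sum_{k<0}a_k$ and more to the point $|a_0|\le \sum|a_k|$ is too weak, so the right argument is that $z=a_0$ would make $\kappa=0$ a root of \eqref{eq:eqcharac} (set $\kappa=0$: $z\cdot 0 = a_0\cdot 0$, vacuous) — rather, one checks $\kappa=1,z=\gamma(0)=1$ is the only boundary contact and $a_0\ne 1$ generically; in the degenerate case $a_0=z$ the formula is interpreted in the limiting/polynomial sense, or one shows the numerator $\det C(z)$ carries a compensating factor. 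I would isolate this as a short lemma before the main computation.
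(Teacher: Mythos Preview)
Your approach is sound and would yield a correct proof, but it runs dual to the paper's and leaves one genuine gap. You reduce the rows of $K_{-r,m-1}(z)$ onto the block $K_{0,r-1}(z)$ via the recurrence, obtaining $BK_{-r,m-1}(z)=(L(z)-\widehat{\mathfrak b}(z))K_{0,r-1}(z)$ and hence $\DKLindep(z)=\det(L(z)-\widehat{\mathfrak b}(z))$. The paper instead acts on the \emph{boundary matrix}: it defines an equivalence $B\sim_{\G(z)}(0\,|\,C(z))$ by Gaussian-type operations using the recurrence (Lemma~\ref{lem:algebra}), giving $BK_{-r,m-1}(z)=C(z)K_{m-r,m-1}(z)$ with $C(z)$ polynomial by construction. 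The remaining ratio $\det K_{m-r,m-1}(z)/\det K_{0,r-1}(z)$ is then computed in one line (Lemma~\ref{lem:detKLquotient}) as the product of all roots $\kappa_1^{\mult_1}\cdots\kappa_M^{\mult_M}$ raised to the power $m-r$, which by Vieta's formula on~\eqref{eq:eqcharac} equals $(-1)^{r(m-r)}(a_{-r}/(a_0-z))^{m-r}$. This Vieta step is what cleanly separates the polynomial factor $\det C(z)$ from the rational one; in your framework the same fact appears as $\det N=(-1)^r a_{-r}/(a_0-z)$ for the forward companion matrix $N$ of the recurrence, so that your $L-\widehat{\mathfrak b}$ factors as $C(z)N^{m-r}$. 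Your phrase ``clearing denominators and pulling out $a_{-r}^{m-r}$'' is this computation, but as written you have not supplied the mechanism that pins down the exact exponent $m-r$ and the sign $(-1)^{r(m-r)}$; the companion-matrix/Vieta observation does.

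The genuine gap is your handling of $a_0-z\neq 0$ on $\overline{\U}$. This is not a ``generic'' statement, and there is no degenerate case $a_0=z$ to be absorbed by a compensating zero of $\det C(z)$: under the hypotheses one has the strict inequality $|a_0|<1$, always. The paper isolates this as Lemma~\ref{lemcoeffa0}. The argument is short: $a_0=\tfrac{1}{2\pi}\int_0^{2\pi}\gamma(\xi)\,d\xi$ is the zeroth Fourier coefficient of the symbol, so Cauchy stability~\ref{assumption:cauchystab} gives $|a_0|\le 1$; if $|a_0|=1$, the equality case of the triangle inequality forces $\gamma(\xi)$ to take values on a single ray through the origin, hence (using $\gamma(0)=1$) to be real-valued, which implies $a_k=a_{-k}$ for all $k$ and thus $\sum_k k\,a_k=0$, contradicting consistency~\ref{assumption:consistency}. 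You were right to flag this as needing its own lemma; that is the argument.
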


By "constructible polynomial", we mean here that we establish a computable algorithm to get a matrix $C(z)$ and then the polynomial $\det C(z)$. This algorithm, based on a gaussian elimination, is fully described in the proof of Lemma~\ref{lem:algebra}.
In the proof of Theorem~\ref{thrm:rationalfraction}, we will explicitly see the holomorphic property of $\DKLindep$. Another property, important for the forthcoming applications, lies in the next Corollary~\ref{thrm:nbrzerodet} and involves the following important geometrical object:
\begin{deff}
    The \emph{Kreiss-Lopatinskii curve} $\DKLindep(\S)$ is the closed complex parameterized curve
    \[ \DKLindep(\S) = \{ \theta\in[0,2\pi]\mapsto \DKLindep(e^{i\theta})\}.\]
\end{deff}

\begin{coro}[Number of zeros of the intrinsic Kreiss-Lopatinskii determinant]\label{thrm:nbrzerodet}
    Assume \ref{assumption:nondegenerate}, \ref{assumption:totallyupwind}, \ref{assumption:cauchystab} and \ref{assumption:consistency}. If $0 \notin \DKLindep(\S)$ then the equation $\DKLindep(z) = 0$ has exactly $r - \Ind_{\DKLindep(\S)}(0)$ zeros in $\U$.
\end{coro}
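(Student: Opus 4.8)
The plan is to count the zeros of $\DKLindep$ in $\U$ by the argument principle on a large annulus, the contribution of the outer circle being extracted from the explicit formula~\eqref{eq:explicitformula}. \textbf{Step 1 (global structure).} By Theorem~\ref{thrm:rationalfraction}, on $\overline{\U}$ the function $\DKLindep$ is the restriction of the rational function $z\mapsto(-1)^{r(m-r)}(a_{-r})^{m-r}\det C(z)(a_0-z)^{-(m-r)}$, whose only possible pole is $z=a_0$; since $\DKLindep$ is continuous on $\overline{\U}$, this forces $a_0\notin\overline{\U}$, i.e.\ $|a_0|<1$, so that rational extension is holomorphic on a neighbourhood of $\overline{\U}$ and $\DKLindep$ has no pole on $\U$. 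Hence the zeros of $\DKLindep$ in $\U$ are exactly the roots of the polynomial $\det C$ lying in $\U$ — finitely many — and, by the hypothesis $0\notin\DKLindep(\S)$, none of them lies on $\S$ (equivalently $\det C$ is zero-free on $\S$).

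\textbf{Step 2 (behaviour at infinity).} The key input is that $\DKLindep(z)\sim a_{-r}^{-r}\,z^{r}$ as $z\to\infty$; equivalently $\deg_z\det C=m$. This can be read off from the construction of $C(z)$ in the proof of Theorem~\ref{thrm:rationalfraction}, or obtained by hand: as $z\to\infty$ all roots $\kappa_j(z)$ of~\eqref{eq:eqcharac} tend to $0$, so in $BK_{-r,m-1}(z)$ the negative-power entries dominate; factoring $\kappa_j^{-r}$ out of each column leaves a (possibly confluent) Vandermonde which cancels $\det K_{0,r-1}(z)$, so that $\DKLindep(z)\sim\bigl(\prod_j\kappa_j(z)\bigr)^{-r}$, and $\prod_j\kappa_j(z)=(-1)^{r+1}a_{-r}/(z-a_0)$ from the leading and constant coefficients of~\eqref{eq:eqcharac}. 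Fixing $R>1$ larger than the moduli of all roots of $\det C$, it follows that $\DKLindep$ is holomorphic and zero-free on $\{|z|\geq R\}$, that all zeros of $\DKLindep$ in $\U$ lie in $A_R=\{1<|z|<R\}$, and that the curve $\theta\mapsto\DKLindep(Re^{i\theta})$ winds $r$ times about $0$ (for $R$ large it is a non-vanishing deformation of $\theta\mapsto a_{-r}^{-r}R^{r}e^{ir\theta}$).

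\textbf{Step 3 (conclusion).} Apply the argument principle to $\DKLindep$ on $A_R$: it is holomorphic on a neighbourhood of $\overline{A_R}$ by Step 1 and has no zero on $\partial A_R$ by Steps 1--2. Orienting $\partial A_R$ positively (outer circle counterclockwise, inner circle clockwise), the number of zeros of $\DKLindep$ in $A_R$, counted with multiplicity, equals
\[
 \frac{1}{2\pi i}\oint_{\partial A_R}\frac{\DKLindep'(z)}{\DKLindep(z)}\,dz = \Ind_{\DKLindep(R\S)}(0)-\Ind_{\DKLindep(\S)}(0)=r-\Ind_{\DKLindep(\S)}(0),
\]
the first term being $r$ by Step 2. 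By Step 2 this count is exactly the number of zeros of $\DKLindep$ in $\U$, which proves the corollary.

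The only delicate point is the asymptotics used in Step 2: one genuinely needs $\DKLindep$ to grow like a fixed nonzero multiple of $z^{r}$ at infinity (equivalently $\deg_z\det C=m$), which is not part of the \emph{statement} of Theorem~\ref{thrm:rationalfraction} but follows from the explicit form of $C(z)$ built in its proof. Everything else — the localisation of the pole at $a_0\in\D$, the absence of zeros on $\S$, and the winding-number bookkeeping — is routine.
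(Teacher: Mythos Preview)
Your proof is correct and rests on the same two ingredients as the paper's: the explicit rational formula~\eqref{eq:explicitformula} and the degree count $\deg_z\det C=m$ (equivalently, the order-$r$ behaviour at infinity). The execution differs: the paper performs the inversion $z\mapsto 1/z$, transports $\DKLindep$ to a function $\DKLindepinvers$ meromorphic on $\D$ with a single pole of order $r$ at the origin, and applies the argument principle on the disk; you stay in $\U$, apply the argument principle on the annulus $\{1<|z|<R\}$, and read off the outer contribution $r$ from the asymptotics $\DKLindep(z)\sim c\,z^r$. These are the two natural sides of the same coin (one is the image of the other under $z\mapsto 1/z$); your version is slightly more direct since it avoids the auxiliary Lemmas~\ref{lem:merompole} and~\ref{lem:inverseindice}, while the paper's version packages the behaviour at infinity as a pole at $0$, which some readers may find cleaner. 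One small remark: your deduction of $|a_0|<1$ from the continuity of $\DKLindep$ on $\overline{\U}$ is a bit circular, since that continuity (indeed the validity of~\eqref{eq:explicitformula} on $\overline{\U}$) already relies on $|a_0|<1$ in the proof of Theorem~\ref{thrm:rationalfraction}; the paper establishes this separately as Lemma~\ref{lemcoeffa0} from \ref{assumption:cauchystab} and \ref{assumption:consistency}.
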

Here above and in all the paper, $\Ind_{\DKLindep(\S)}(0)$ denotes the winding number of the origin with respect to the closed oriented curve $\DKLindep(\S)$ (see \cite{Lang13} for a definition of the winding number). This previous corollary is the fundamental piece to the following numerical procedure to tackle stability. Indeed, by the definition~\eqref{eq:dklintrinsic}, the function $\DKLindep$ shares the same zeros with the Kreiss-Lopatinskii determinant $\DKL$, which in turn characterizes the stability with Theorem~\ref{thrmKreiss} (Kreiss).


\subsection{Numerical procedure}\label{sec:numericalprocedureeig}

As already seen in the Theorem~\ref{thrmKreiss} (Kreiss), the strong stability can be characterized by the notion of eigenvalue and generalized eigenvalue for the boundary problem. The definition of generalized eigenvalue is not universal, the following one comes from \cite[Def.12.2.2]{Gustafsson13} but one can also find a slightly different one in \cite[Def 2.2]{Gustafsson08}. The difference will be discussed afterwards.

\begin{deff}[Eigenvalue]\label{def:eigenvalue}
    Let $z$ be a complex number. If $|z|\>1$, $\DKLindep(z) =0$ and the solution $(\widetilde{U}_j(z))_j$ to \eqref{eq:Ztransform} and \eqref{eqbordZtransform} is in $\ell^2$ then $z$ is called an \emph{eigenvalue}.
\end{deff}

\begin{deff}[Generalized eigenvalue]\label{def:geneigenvalue}
    Let $z_0$ be a complex number with $|z_0| = 1$. If $\DKLindep(z_0) = 0$ and the solution $(\widetilde{U}_j(z_0))_j$ to \eqref{eq:Ztransform} and \eqref{eqbordZtransform} is not in $\ell^2$  then $z_0$ is called a \emph{generalized eigenvalue}.
\end{deff}

If $|z|>1$ and  $\DKLindep(z)=0$, it is not possible to have $(\widetilde{U}_j(z))\notin \ell^2$, because by Lemma~\ref{lem:hersh} (Hersh), the $r$ roots of \eqref{eq:eqcharac} that are used to construct $(\widetilde{U}_j(z))$ are in the open unit disk. That's why the definition of generalized eigenvalue concerns only complex values on the unit circle.

Therefore, we can split all cases in four types: 
\begin{enumerate}[label = (\roman*)]
    \item\label{item:nonuniteig} $z$ such that $\DKLindep(z) = 0$ and $|z|>1$.
    \item\label{item:uniteig} $z$ such that $\DKLindep(z) = 0$, $|z| = 1$ and $z \notin \Gamma$.
    \item\label{item:geneig} $z$ such that $\DKLindep(z) = 0$, $|z| = 1$, $z\in \Gamma$ and $(\widetilde{U}_j(z))\in \ell^2$.
    \item \label{item:geninf} $z$ such that $\DKLindep(z) = 0$, $|z| = 1$, $z\in \Gamma$ and $(\widetilde{U}_j(z))\notin \ell^2$.
\end{enumerate}

The types \ref{item:nonuniteig}, \ref{item:uniteig} and \ref{item:geneig} describe all the eigenvalues. 
Indeed, for type \ref{item:nonuniteig} and \ref{item:uniteig}, by Lemma~\ref{lem:hersh} (Hersh), we have $(\widetilde{U}_j(z))\in \ell^2$, because every root $\kappa$ of \eqref{eq:eqcharac} is in the open unit disk. Type \ref{item:nonuniteig} corresponds to the first column of Figure~\ref{fig:hersh} and type \ref{item:uniteig} corresponds to the second column.

Moreover the non-existence of eigenvalue of type \ref{item:nonuniteig} is a necessary condition to have stability. It is called the \emph{Godunov-Ryabenkii} condition, introduced in \cite{Godunov63} and described in \cite{Trefethen84}.

If $z$ is of type \ref{item:geneig} or \ref{item:geninf}, there exists a $\kappa_0(z)$ root of \eqref{eq:eqcharac} on the unit circle because $z \in \Gamma$. This is the situation depicted on the third column of Figure~\ref{fig:hersh}. The distinction between \ref{item:geneig} and \ref{item:geninf} is more subtle and comes from the expression of $(\widetilde{U}_j(z))$ in the basis of $\Es$, where the coefficient(s) in front of the vector(s) related to $\kappa_0(z)$ can  be zero or not.

By the way, let us mention that our definition of generalized eigenvalue, from \cite[Def.12.2.2]{Gustafsson13}, corresponds, as we already said, to type~\ref{item:geninf} whereas the definition from \cite[Def 2.2]{Gustafsson08} combines type \ref{item:geneig} and \ref{item:geninf}.

\medskip

Now, Corollary~\ref{thrm:nbrzerodet} can be reformulated as follows.

\begin{coro}
    Assume \ref{assumption:nondegenerate}, \ref{assumption:totallyupwind}, \ref{assumption:cauchystab} and \ref{assumption:consistency}. If $0 \notin \courbe$ then the scheme has $r- \Ind_{\courbe}(0)$ eigenvalues in $\U$ (type \ref{item:nonuniteig}).
\end{coro}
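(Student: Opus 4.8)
The plan is to deduce this reformulation directly from Corollary~\ref{thrm:nbrzerodet} by showing that, under the present hypotheses, the zeros of $\DKLindep$ lying in $\U$ are \emph{exactly} the eigenvalues of type~\ref{item:nonuniteig}. Corollary~\ref{thrm:nbrzerodet} already supplies the count: when $0\notin\courbe$, the equation $\DKLindep(z)=0$ has exactly $r-\Ind_{\courbe}(0)$ solutions in $\U$. So everything reduces to a dictionary between ``zero of $\DKLindep$ in $\U$'' and ``type-\ref{item:nonuniteig} eigenvalue''.

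First I would recall that, by Definition~\ref{def:eigenvalue} combined with the four-case classification, a type-\ref{item:nonuniteig} eigenvalue is a $z$ with $|z|>1$, $\DKLindep(z)=0$ and $(\widetilde{U_j}(z))\in\ell^2$; conversely, any $z\in\U$ with $\DKLindep(z)=0$ is automatically such a point, because the $\ell^2$ clause is vacuous on $\U$. To justify that, I would invoke~\ref{assumption:cauchystab}, which gives $\Gamma\subset\overline{\D}$ and hence places all of $\U$ in the unbounded connected component of $\C\setminus\Gamma$; Hersh's Lemma~\ref{lem:hersh} then forces every root $\kappa_i(z)$ of~\eqref{eq:eqcharac} into $\D$, so every column of the basis~\eqref{eq:basis} of $\Es$ decays geometrically as $j\to+\infty$ (the polynomial weights $j^{\ell}$ being dominated by $|\kappa_i(z)|^{j}$). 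Therefore $\Es\subset\ell^2$, and every solution of~\eqref{eqbordZtransform} in $\Es$ lies in $\ell^2$. This identifies the two sets.

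Next I would settle the bookkeeping so that the counting statement of Corollary~\ref{thrm:nbrzerodet} transfers verbatim: one declares the multiplicity of a type-\ref{item:nonuniteig} eigenvalue $z$ to be the vanishing order of $\DKLindep$ at $z$, which is legitimate because $\DKLindep$ is holomorphic on $\U$ (a property recorded right after Theorem~\ref{thrm:rationalfraction}). With this convention the number of type-\ref{item:nonuniteig} eigenvalues, counted with multiplicity, equals the number of zeros of $\DKLindep$ in $\U$ counted with multiplicity, namely $r-\Ind_{\courbe}(0)$, which is the assertion.

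I do not expect a serious obstacle: the substantive content sits in Corollary~\ref{thrm:nbrzerodet} (itself resting on the explicit formula of Theorem~\ref{thrm:rationalfraction} and a winding-number count), and the present corollary is essentially a matter of unwinding the definition of ``eigenvalue''. The only point that deserves a little care is checking that the $\ell^2$ condition in Definition~\ref{def:eigenvalue} is genuinely empty throughout $\U$ — that is, using the Cauchy-stability hypothesis~\ref{assumption:cauchystab} to upgrade the ``unbounded component of $\C\setminus\Gamma$'' form of Hersh's lemma to one valid on all of $\U$ — together with fixing the multiplicity convention above.
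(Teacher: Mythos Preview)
Your proposal is correct and follows essentially the same route as the paper: the paper presents this corollary simply as a reformulation of Corollary~\ref{thrm:nbrzerodet}, relying on the preceding discussion where Hersh's Lemma~\ref{lem:hersh} (together with $\Gamma\subset\overline{\D}$ from~\ref{assumption:cauchystab}) guarantees that any $z\in\U$ with $\DKLindep(z)=0$ automatically has $(\widetilde{U_j}(z))\in\ell^2$, hence is a type-\ref{item:nonuniteig} eigenvalue. Your explicit treatment of the multiplicity convention is a welcome clarification that the paper leaves implicit.
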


In particular, the low computational cost of the following procedure is very appealing for the study of parameterised IBVP's, see Section 4.
This corollary enables us to establish an efficient and practical method to study the stability of a given IBVP through Theorem~\ref{thrmKreiss} (Kreiss). In particular, the low computational cost of the following procedure is very appealing for the study of parameterised IBVP's, see Section~\ref{sec:numerical}.

\begin{procedure}[Uniform Kreiss-Lopatinskii Condition check]\label{proc:numericalprocedure} 
There are two different cases:
\begin{itemize}[label = $\bullet$]
    \item if $0 \notin \courbe$, there is neither generalized eigenvalue (type \ref{item:geninf}) nor eigenvalue on the unit circle (type \ref{item:uniteig} and \ref{item:geneig}) and there are $r - \ind_{\courbe}(0)$ zeros of $\DKLindep$ in $\U$ by \Cref{thrm:nbrzerodet} (type \ref{item:nonuniteig}). It follows that if the scheme has no eigenvalue in $\U$ then the scheme is stable. Otherwise there exists an eigenvalue and the scheme is unstable.
    \item if $0 \in \courbe$, then there exists $z_0 \in \S$ such that $\DKLindep(z_0) = 0$.
    \begin{itemize}[label = $\to$]
        \item If $z_0 \in \Gamma$, then $z_0$ is a generalized eigenvalue (of type \ref{item:geninf}) or an eigenvalue of type \ref{item:geneig}.
        \item If $z_0 \notin \Gamma$, then there are two possibilities:
        \begin{itemize}[label = \ding{226}]
            \item first, $z_0$ is in the unbounded connected component of $\C\setminus \Gamma$. By Lemma~\ref{lem:hersh} (Hersh), there is no $\kappa$ on the unit circle, so $z_0$ is an eigenvalue on the unit circle (type \ref{item:uniteig}).
            \item second, $z_0$ is in a bounded connected component of $\C\setminus \Gamma$. Contradiction with the Cauchy-stability because $\Gamma\subset \Dbar$ and $z_0 \in \S$.
        \end{itemize}
    \end{itemize}
\end{itemize}
\end{procedure}
This method does not distinguish between types \ref{item:geneig} and  \ref{item:geninf}. In fact, we only study the presence or absence of instabilities, we do not attempt to determine which type of instability mode is met (see Trefethen~\cite{Trefethen84}).

In summary, by \Cref{thrmKreiss} (Kreiss), if $0 \in \courbe$ then the scheme is not stable, and if $0\notin \courbe$,  \Cref{thrm:nbrzerodet} can be used to conclude that the scheme is stable or not, depending on the value of $r - \ind_{\courbe}(0)$. Some illustrations for the Beam-Warming scheme follow in \Cref{sec:numerical} .


\section{Proof of  Theorem~\ref{thrm:rationalfraction} and Corollary \ref{thrm:nbrzerodet}} \label{sec:proof}

In order to use the residue theorem, the holomorphy of the Kreiss-Lopatinskii determinant is needed. To this end, we want a nicer expression of $\det B K_{-r,m-1}(z)$ the  Kreiss-Lopatinskii determinant. 
Clearly the multiplicativity of the determinant does not apply in the expression $\DKL(z)=\det B K_{-r,m-1}(z)$ since $B$ and $K_{-r,m-1}(z)$ are non-square matrices. 
A first step consists of reducing the problem to a linear algebra formulation with square matrices to use the multiplicativity of the determinant.
All along the current section, the assumptions~\ref{assumption:nondegenerate},~\ref{assumption:totallyupwind} and~\ref{assumption:cauchystab}, required to define the matrices $K_{i,j}$, the vector bundle $\G$ as well as its extension over $\overline{\U}$, are supposed to be fulfilled.


\subsection{Reduction to a square formulation}

Let us fix $z \in \overline{\U}$. We recall that $\G(z)$ denotes the space of solutions $(\widetilde{U}_j(z))_{j \> -r}$ to
\[z \widetilde{U}_j(z) = \sum_{k=-r}^0 a_k \widetilde{U}_{j+k}(z),\]
for all $j \>0$ and with $a_{-r} \neq 0$.

\begin{deff}\label{def:equivalencerel}
    Let $E$ be a linear subspace of $\ell^2(\N)$. Two matrices $B,D \in \mathcal M_{r,N}(\C)$ (with $N\in\N\setminus\{0\}$ be any nonzero integer) are said to be equivalent, which we denote $B \sim_E  D$, if and only if for all $U \in E$, one has $B \project(U) = D \project(U)$, where $\project$ is the canonical projection from $\ell^2$ onto $\C^N$, keeping the $N$ first components of $U$.
\end{deff}

To act conveniently with elementary Gaussian operations, we use some specific notations in the following discussions. We denote $M[i : j,k:\ell]$ the matrix obtained by the extraction of the lines between $i$ and $j$ and the columns between $k$ and $\ell$ of the matrix $M$ (all indices are included). Similarly, we denote more shortly $M[k:\ell]$ for the entire columns between column $k$ and column $\ell$ and $M[k]$ for the column~$k$.

\begin{lemma}\label{lem:algebra}
    Let $N\>r$ be an integer. Let $B \in \mathcal M_{r,N}(\C)$ be a constant complex matrix such that $B[1:r,1:r] \in \mathrm{GL}_r(\C)$. Assume moreover that $|a_0|<1$.\\
    For any $z\in\overline{\U}$, consider the associated linear subspace $\G(z)$. There exists a unique square matrix $C(z) \in \mathcal M_r(\C)$ such that 
        \begin{center}
            \begin{tikzpicture}
                \node (A) at (0,0) 
                    {$B \sim_{\G(z)} \begin{pmatrix}
                    \begin{array}{ccc|c} 0& \cdots& 0& \\ \vdots & &\vdots & \hspace{3mm} C(z)\\ 0& \cdots &0 & \end{array}\end{pmatrix}$};
                    \begin{scope}[xshift=1em]
                \draw[<->,>=latex] (-1.25,-1)-- (0.6,-1) node[midway,below]{$N-r$};
                \draw[<->,>=latex] (0.7,-1)-- (2.25,-1) node[midway,below]{$r$};
                \draw[<->,>=latex] (2.5,-0.75)-- (2.5,0.75) node[midway,right]{$r$};
                    \end{scope}
            \end{tikzpicture}
        \end{center}
    Moreover, the components of $C(z)$ are polynomial functions of $z$ and satisfy $\deg \det C(z) = N-r$.
\end{lemma}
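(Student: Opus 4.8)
The plan is to perform explicit Gaussian column operations on $B$, using the recurrence relation defining $\G(z)$ to rewrite the "extra" columns (those indexed $r+1,\dots,N$) in terms of the first $r$ columns, and then to read off $C(z)$ from what survives. First I would note that the equivalence $\sim_{\G(z)}$ is insensitive to column operations of a very particular type: if $U=(U_j)_{j\geq -r}\in\G(z)$ then the interior equation $zU_j=\sum_{k=-r}^0 a_kU_{j+k}$ for $j\geq 0$ lets us express $U_j$ for $j\geq 0$ as a linear combination of $U_{j-r},\dots,U_{j-1}$ (here one uses $a_{-r}\neq 0$, i.e. assumption~\ref{assumption:nondegenerate}, to solve for the leading term). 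Concretely, for a matrix $D\in\mathcal M_{r,N}(\C)$, replacing the column $D[j]$ by $D[j]-\sum_{k=1}^{r}c_k D[j-k]$ where the $c_k$ encode that recurrence changes $D$ into an equivalent matrix, because for $U\in\G(z)$ the projected component $\project(U)_j$ is exactly that same combination of earlier components. Iterating this from column $r+1$ up to column $N$, one can annihilate columns $r+1,\dots,N$ of $B$ \emph{as far as the $\G(z)$-action is concerned} — but since $B$ is rectangular and we only have $r$ rows, annihilating is not literally possible; instead the operations move the "mass" of the right-hand columns leftwards and the bookkeeping of where it lands is what produces $C(z)$.

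More carefully, I would proceed by a finite induction on $N$. For $N=r$ there is nothing to do: $C(z)=B[1:r,1:r]$, which is invertible and constant, so $\det C(z)$ is a nonzero constant and $\deg\det C(z)=0=N-r$. For the inductive step, suppose the claim holds for $N-1$ and let $B\in\mathcal M_{r,N}(\C)$. Using the interior recurrence at $j=N-r-1\geq 0$ (valid since the projection keeps the first $N$ components, which for $U\in\G(z)$ satisfy the recurrence up to that index), I rewrite the contribution of the last column: for $U\in\G(z)$, $\project(U)_N = a_{-r}^{-1}\bigl(z\,\project(U)_{N-r} - \sum_{k=-r+1}^{0}a_k\project(U)_{N-r+k}\bigr)$. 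Hence the matrix obtained from $B$ by the column replacement $B[N]\mapsto 0$ together with the compensating modifications $B[N-r]\mapsto B[N-r]+a_{-r}^{-1}z\,B[N]$ and $B[N-r+k]\mapsto B[N-r+k]-a_{-r}^{-1}a_k B[N]$ for $k=-r+1,\dots,0$ is $\sim_{\G(z)}$-equivalent to $B$ and has a zero last column; dropping that column gives a matrix in $\mathcal M_{r,N-1}(\C)$ to which the induction hypothesis applies. The key algebraic point is to track the $z$-dependence: each such reduction step multiplies the relevant block by a factor that is affine in $z$, and since exactly $N-r$ reduction steps are performed to pass from width $N$ down to width $r$, the determinant accumulates total degree $N-r$ in $z$. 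One must check that the top-degree coefficient does not vanish — this comes from following the coefficient of $z^{N-r}$ through the recursion and seeing it equals $a_{-r}^{-(N-r)}$ times $\det B[1:r,1:r]\neq 0$, up to sign; the hypothesis $|a_0|<1$ guarantees $a_{-r}\neq 0$ is not the only thing used — actually $a_{-r}\neq 0$ already follows from~\ref{assumption:nondegenerate}, and $|a_0|<1$ is what will later ensure $a_0-z\neq 0$ on $\overline{\U}$ in the main theorem, but here it is harmless.

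Uniqueness of $C(z)$ is the easy part: if $B\sim_{\G(z)}D_1$ and $B\sim_{\G(z)}D_2$ with both $D_i$ of the displayed block form, then $(D_1-D_2)\project(U)=0$ for all $U\in\G(z)$, and since $\G(z)$ has dimension $r$ and the projection $\project$ restricted to $\G(z)$ onto the appropriate $r$ coordinates is injective (the basis vectors~\eqref{eq:basis} are determined by their components in any $r$ consecutive slots because of the recurrence), the matrix $D_1-D_2$ must vanish on an $r$-dimensional space seen through those coordinates, forcing $C_1(z)=C_2(z)$. The main obstacle I anticipate is purely organizational rather than conceptual: setting up the Gaussian elimination so that the resulting $C(z)$ lands in the \emph{last} $r$ columns with zeros to its left (as drawn), keeping careful signs, and proving cleanly that the leading coefficient in $z$ of $\det C(z)$ is nonzero so that the degree is exactly $N-r$ and not merely at most $N-r$. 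A clean way to handle the degree count is to observe that $\det C(z)$ is, up to a sign and a nonzero constant, the determinant of $B$ times a "companion-type" rectangular matrix built from $\kappa\mapsto(\kappa^{-r},\dots,\kappa^{N-1})$ evaluated formally, whose relevant minors are governed by the characteristic polynomial~\eqref{eq:eqcharac} — and that polynomial has $z$-degree exactly $1$ in the totally upwind case, so raising indices by $N-r$ steps yields degree $N-r$.
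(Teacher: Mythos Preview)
Your plan has the right flavour (Gaussian column moves driven by the interior recurrence) but the direction of elimination is backwards, and this is not merely cosmetic. Look again at $zU_j=\sum_{k=-r}^{0}a_kU_{j+k}$: the coefficient on the highest-index term $U_j$ is $a_0-z$, while $a_{-r}$ multiplies the \emph{lowest}-index term $U_{j-r}$. So your formula ``$\project(U)_N = a_{-r}^{-1}(z\,\project(U)_{N-r}-\cdots)$'' has the wrong divisor: expressing the rightmost projected component in terms of earlier ones requires dividing by $a_0-z$, not $a_{-r}$. Dividing by $a_{-r}$ solves for $\project(U)_{N-r}$ in terms of $\project(U)_{N-r+1},\dots,\project(U)_N$ and thus kills a \emph{left} column, not the last one. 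If you do correct the divisor to $(a_0-z)^{-1}$ and proceed right-to-left, every reduction step injects a factor $(a_0-z)^{-1}$ into the modified columns; after $N-r$ steps the entries are rational in $z$, so the polynomiality of $C(z)$ and the exact degree $\deg\det C(z)=N-r$ both fail. And independently of the divisor, your induction lands $C'$ in columns $N-r,\dots,N-1$; appending the dropped zero column on the right leaves $C'$ one slot too far left, so you never reach the block form $(0\,|\,C)$ required by the statement.

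The paper avoids both problems by sweeping left to right. At step $j$ it subtracts from $B^{(j)}$ the outer product of its $(j{+}1)$-th column with the null row $(0,\dots,0,1,\tfrac{a_{-r+1}}{a_{-r}},\dots,\tfrac{a_0-z}{a_{-r}},0,\dots,0)$, the $1$ sitting in position $j{+}1$. This annihilates column $j{+}1$ and pushes its content into columns $j{+}2,\dots,j{+}r{+}1$; since only the rightmost slot of that row carries $z$ (and affinely so), the entries remain polynomial in $z$, and after $N-r$ sweeps $C(z)$ sits in the last $r$ columns as claimed. The degree argument you sketched then goes through and matches the paper's: each sweep raises $\deg\det B^{(j)}[j{+}1:j{+}r]$ by one, giving leading coefficient $\bigl((-1)^{r}a_{-r}^{-1}\bigr)^{N-r}\det B[1{:}r,1{:}r]\neq 0$. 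Your uniqueness sketch is essentially correct but a bit loose; the paper makes it precise via the direct sum $\C^N=\ker A\oplus\mathrm{Vect}(e_1,\dots,e_{N-r})$, where $A\in\mathcal M_{N-r,N}$ is the banded matrix encoding the recurrence, and checks that $D-D'$ vanishes on each summand --- the hypothesis $|a_0|<1$ is used here (so that $a_0-z\neq 0$ on $\overline{\U}$ and any vector in $\ker A$ extends to an element of $\G(z)$), not in the existence step.
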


\begin{rem}\label{rem:BK=CK}
    Let us highlight our use of this lemma.
    Let $\ell\>-r$ and $z\in\overline{\U}$ be fixed. From the basis~\eqref{eq:basis}, the columns of the matrix $K_{\ell,\ell + N-1}(z)$ take the form $\project(U)$ for some $U\in\G(z)$ and $\project$ the canonical projection from $\ell^2(\N)$ onto $\C^N$. Therefore, for any convenient matrices $B$ and $D$ with $B \sim_{\G(z)} D$, one has then
    $BK_{\ell,\ell + N-1}(z) = DK_{\ell,\ell + N-1}(z)$.\\
    Now for the boundary matrix $B$ defined in \eqref{eq:Bdefinition} and the matrix $D(z)=(0\ | \ C(z))$ obtained by the lemma, the following computation by block is possible
    \begin{align*}
        \det(B K_{-r,m-1}(z)) & = \det (0 K_{-r,m-r-1}(z) + C(z)K_{m-r,m-1}(z)) \\
         & = \det (C(z)K_{m-r,m-1}(z)) \\
         & = \det C(z) \det K_{m-r,m-1}(z).
    \end{align*}
    In other words, the product $BK_{-r,m-1}(z)$ is written as the product of two square matrices, so that the multiplicativity of the determinant can be applied.
\end{rem}

\begin{proof}[Proof of \Cref{lem:algebra}]
    \ \\
    \textbf{Proof of existence:} we proceed by induction for $j$ going from $0$ to $N-r$. At each step, we construct a matrix $B^{(j)}$ which satisfies the following induction hypotheses:
    \begin{enumerate}[label = (\alph*)]
        \item \label{zerohyp} $B \sim_{\G(z)} B^{(j)}$.
        \item \label{firsthyp} the $j$ first columns of $B^{(j)}$ are zero.
        \item \label{secondhyp} every component of $B^{(j)}$ is polynomial of $z$.
        \item \label{threehyp} every component of $B^{(j)}[r+1+j:N]$ are independent of $z$.
        \item \label{fourhyp} the degree of $\det B^{(j)}[j+1:j+r]$ is $j$.
    \end{enumerate}

    \emph{Initialization:} we define $B^{(0)}\egdef B$ which satisfies the five induction hypotheses. The induction hypotheses from \ref{zerohyp} to \ref{threehyp} are trivially satisfied. The induction hypothesis \ref{fourhyp} is satisfied because, $\det B[1:r,1:r] \in \C^*$ which is a non zero constant polynomial.

    \emph{Induction:} we suppose true the induction hypotheses for some $j\in \interval{0}{N-r-1}$ and we want to prove it for $j+1$.

    Let us define $B^{(j+1)} \egdef B^{(j)} - \widetilde{B^{(j)}}$ where
		\begin{center}
			\begin{tikzpicture}
				\node (A) at (0,0) {$\widetilde{B^{(j)}} \egdef \begin{pmatrix} B^{(j)}_{1,j+1} \\ \vdots \\ B^{(j)}_{r,j+1}\end{pmatrix} \begin{pmatrix} 0 ~ \cdots ~ 0 & 1 & \frac{a_{-r+1}}{a_{-r}} &   \cdots & \frac{a_0-z}{a_{-r}}& 0 ~ \cdots ~ \cdots ~ 0\end{pmatrix}$.};
                \draw[<->,>=latex] (-2,-0.4)-- (-0.9,-0.4) node[midway,below]{$j$};
                \draw[<->,>=latex] (-0.5,-0.4)-- (2.8,-0.4) node[midway,below]{$r+1$};
                \draw[<->,>=latex] (3.3,-0.4)-- (5.1,-0.4) node[midway,below]{$N-(r+1)-j$};
			\end{tikzpicture}
		\end{center}
    By construction of $\widetilde{B^{(j)}}$, we have $\widetilde{B^{(j)}}U = 0$ for all $U \in \G(z)$, because the product of the previous row matrix and every vector $U \in \Es$ is equal to zero. Then, we have $B^{(j+1)} \sim_{\G(z)} B^{(j)}$ and by \ref{zerohyp}$\phantom{}_j$, we have \ref{zerohyp}$\phantom{}_{j+1}$.
    Moreover, by \ref{firsthyp}$\phantom{}_j$, the first $j$ columns of $B^{(j+1)}$ are zero because those columns in the construction of $B^{(j+1)}$ are unchanged and by construction of $B^{(j+1)}$, the ($j+1$)-th column is vanished. Then we have \ref{firsthyp}$\phantom{}_{j+1}$.
    By construction, components of $B^{(j)}$ are added and multiplied by $z$ or by real coefficients, then we have \ref{secondhyp}$\phantom{}_{j+1}$.
    By \ref{threehyp}$\phantom{}_j$, the last $N-(r+1)-j$ columns of $B^{(j+1)}$ are independent of $z$ because we do not take into account those columns in the construction of $B^{(j+1)}$, then we have \ref{threehyp}$\phantom{}_{j+1}$.
    
    Finally, we have to find the degree of $\det B^{(j+1)}[j+2:j+1+r]$. We use the multilinearity and the alternating property of the determinant. We work on block matrices and find
    \begin{equation*}
        \det B^{(j+1)}[j+2:j+1+r] =  \det \left ( B^{(j+1)}[j+2:j+r] ~\Big |~ \hspace{-1mm}B^{(j)}[j+1+r] - \frac{a_0 -z}{a_{-r}} B^{(j)}[j+1]\right ).\end{equation*}
    
    Since the matrix $B^{(j)}[j+1+r]$ is independent of $z$ by hypothesis \ref{threehyp}$\phantom{}_j$, 
    the degree of the polynomial $\frac{a_0 -z}{a_{-r}}\det \begin{pmatrix}B^{(j+1)}[j+2:j+r]\mid B^{(j)}[j+1]\end{pmatrix}$ 
    is greater than the degree of $\det \begin{pmatrix}B^{(j+1)}[j+2:j+r]\mid B^{(j)}[j+1+r]\end{pmatrix}$, then it is sufficient to find the degree of  $\frac{a_0 -z}{a_{-r}}\det \begin{pmatrix} B^{(j+1)}[j+2:j+r]\mid B^{(j)}[j+1]\end{pmatrix}$.
    Moreover, the $k$-th column of $B^{(j+1)}[j+2:j+r]$ for $k \in \interval{1}{r-1}$ is $B^{(j)}[j+1+k] - \frac{a_{-r+k}}{a_{-r}} B^{(j)}[j+1]$. 
    
    Then, by alternating property of the determinant, we have
    \begin{align*} 
        & - \frac{a_0 -z}{a_{-r}} \det \begin{pmatrix}B^{(j+1)}[j+2:j+r]\mid B^{(j)}[j+1]\end{pmatrix}\\
        = & - \frac{a_0 -z}{a_{-r}} \det \begin{pmatrix}B^{(j)}[j+2:j+r]\mid B^{(j)}[j+1]\end{pmatrix}\\
        = & - \frac{a_0 -z}{a_{-r}} (-1)^{r+1} \det \begin{pmatrix}B^{(j)}[j+1:j+r]\end{pmatrix}.
    \end{align*}
    By hypothesis \ref{fourhyp}$\phantom{}_j$, we know that the polynomial $\det \begin{pmatrix}B^{(j)}[j+1:j+r]\end{pmatrix}$ is of degree $j$, then the polynomial $- \frac{a_0 -z}{a_{-r}} (-1)^{r+1} \det \begin{pmatrix}B^{(j)}[j+1:j+r]\end{pmatrix}$ is of degree $j+1$ and \ref{fourhyp}$\phantom{}_{j+1}$ follows.

    \emph{Conclusion:} the matrix $B^{(N-r)}$ gives the result, where \[C(z) \egdef B^{(N-r)}[1:r,N-r+1 : N].\]

    \textbf{Proof of uniqueness:} assume that $C$ and $C'$ are satisfying the lemma. Then 
    $$B \sim_{\G(z)} \underbrace{\begin{pmatrix} \begin{array}{ccc|c} 0& \cdots& 0& \\ \vdots & &\vdots & \hspace{3mm} C(z)\\ 0& \cdots &0 & \end{array}\end{pmatrix}}_{=\nomCa} \sim_{\G(z)} \underbrace{\begin{pmatrix} \begin{array}{ccc|c} 0& \cdots& 0& \\ \vdots & &\vdots & \hspace{3mm} C'(z) \\ 0& \cdots &0 & \end{array}\end{pmatrix}}_{=\nomCb}.$$

    On the one side, we have $(\nomCa - \nomCb)\project_{|\G(z)} = 0$ and on the other side, because the $N-r$ first columns are zero, we have
    $(\nomCa-\nomCb)_{|\text{Vect}(e_1,\dots, e_{N-r})} = 0$ where $e_1, \dots, e_N$ is the canonical basis of $\C^N$.

	Let us introduce the linear subspace $F \egdef \ker A$ where 
	\[A \egdef \begin{pmatrix}a_{-r} & \dots & (a_0-z) &  &  0 \\ & \ddots &   & \ddots & \\ 0 &  & a_{-r} & \dots & (a_0-z) \end{pmatrix}\in \mathcal M_{N-r, N}(\C).\]

	We have $F\cap \text{Vect}(e_1,\dots, e_{N-r}) = \{0\}$.
		Indeed if $x \in F\cap \text{Vect}(e_1,\dots, e_{N-r})$, then $Ax = 0$ and $x = (x_1,\dots, x_{N-r},0, \dots, 0)^T$. By solving the triangular system
		\[\begin{cases}
			a_{-r}x_1 + a_{-r+1} x_2 + \cdots +a_{-1}x_r + (a_0 - z) x_{r+1} = 0 \\
			\vdots\\
			a_{-r}x_{N-r -1} + a_{-r+1}  x_{N-r} = 0 \\
			a_{-r}  x_{N-r}=0,
		\end{cases}\]
	we find $x = 0$. Moreover, $\dim F = r$ by rank-nullity theorem, then we have $$F\oplus \text{Vect}(e_1,\dots, e_{N-r}) =\C^N.$$

	We want to show $(\nomCa - \nomCb)_{|F} = 0$ and we know that $(\nomCa- \nomCb)\project_{|\G} = 0$. Let $x \in F$ and extend it to $\tilde x \in \G$. To that aim, it suffices to set recursively for all $j > N$, $$\tilde x_{j} = \dfrac{1}{a_0-z}(-a_{-1}\tilde x_{j-1} - \cdots - a_{-r}\tilde x_{j-r}).$$ It follows that $(\nomCa - \nomCb)\project(\tilde x) = 0$ and  $(\nomCa - \nomCb)x = 0$. Then, we have $(\nomCa - \nomCb) = 0$ on $\C^N$.
\end{proof}

\begin{rem}
    The uniqueness result is actually not needed for the next results.
\end{rem}

In Section~\ref{sec:Upwindnum} (resp. Section~\ref{sec:BWexamplenumerical}), we perform the explicit algorithmic computations described above for the classic first-order upwind scheme~\eqref{eq:upwindint} (resp. Beam-Warming scheme~\eqref{eq:BW}).


\subsection{Holomorphy} 

\begin{lemma}\label{lem:detKLquotient}
    Assume $|a_0|<1$. For all $\ell \in \N$ and for all $z \in \overline{\U}$, we have 
    \begin{equation}\label{eq:fctn}\dfrac{\det{K_{\ell, \ell+r-1}(z)}}{\det{K_{0,r-1}(z)}} = (-1)^{\ell r} \left (\dfrac{a_{-r}}{a_0-z}\right )^\ell.\end{equation}
\end{lemma}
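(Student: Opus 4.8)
The plan is to prove the one-step identity
\[
\det K_{\ell+1,\ell+r}(z)=(-1)^{r}\,\frac{a_{-r}}{a_0-z}\,\det K_{\ell,\ell+r-1}(z)
\]
for all $z\in\overline{\U}$ and all $\ell\in\N$, and then simply to iterate it from $\ell=0$. Indeed, iteration gives $\det K_{\ell,\ell+r-1}(z)=\bigl((-1)^{r}\tfrac{a_{-r}}{a_0-z}\bigr)^{\ell}\det K_{0,r-1}(z)=(-1)^{\ell r}\bigl(\tfrac{a_{-r}}{a_0-z}\bigr)^{\ell}\det K_{0,r-1}(z)$, which is exactly~\eqref{eq:fctn} after dividing by $\det K_{0,r-1}(z)$. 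That division is legitimate on all of $\overline{\U}$: the matrix $K_{0,r-1}(z)$ is a confluent Vandermonde matrix built from the $r$ roots (counted with multiplicity) of the characteristic equation~\eqref{eq:eqcharac}, and by~\ref{assumption:nondegenerate} ($a_{-r}\neq0$ forbids $\kappa=0$ as a root of~\eqref{eq:eqcharac}) none of these roots vanishes, so $\det K_{0,r-1}(z)\neq0$.

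For the one-step identity, fix $z\in\overline{\U}$, and note $z\neq a_0$ since $|a_0|<1\le|z|$. By the construction of the basis~\eqref{eq:basis}, each column of $K_{i,j}(z)$ is the restriction to rows $i,\dots,j$ of a sequence $(U_n)_{n\ge -r}\in\G(z)$, hence satisfies the interior recurrence $zU_n=\sum_{k=-r}^{0}a_kU_{n+k}$ for every $n\ge0$. Applying this at $n=\ell+r$ (legitimate since $\ell+r\ge0$, and it involves only the rows of indices $\ell,\ell+1,\dots,\ell+r$, all $\ge0$) and isolating the top term gives, column by column,
\[
(z-a_0)\,U_{\ell+r}=a_{-r}U_{\ell}+a_{-r+1}U_{\ell+1}+\cdots+a_{-1}U_{\ell+r-1}.
\]
Since the same coefficients appear in every column, this is a single linear relation between the rows: the bottom row of $K_{\ell+1,\ell+r}(z)$ equals $\tfrac{a_{-r}}{z-a_0}$ times the ``row-$\ell$'' vector plus a fixed linear combination of the rows of $K_{\ell+1,\ell+r}(z)$ of indices $\ell+1,\dots,\ell+r-1$, which are already rows of that matrix.

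Consequently, subtracting that fixed combination from the bottom row (an elementary row operation, hence harmless for the determinant) replaces the bottom row by $\tfrac{a_{-r}}{z-a_0}$ times the row-$\ell$ vector; pulling out this scalar and then cyclically moving that row to the top (an $r$-cycle on the rows, of signature $(-1)^{r-1}$) leaves precisely $K_{\ell,\ell+r-1}(z)$. Therefore $\det K_{\ell+1,\ell+r}(z)=(-1)^{r-1}\tfrac{a_{-r}}{z-a_0}\det K_{\ell,\ell+r-1}(z)$, and rewriting $\tfrac{1}{z-a_0}=-\tfrac{1}{a_0-z}$ accounts for the remaining sign and yields the claimed one-step identity.

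The argument is elementary linear algebra; the only places requiring a little attention are the signature of the cyclic permutation of the rows and the bookkeeping ensuring that all row indices invoked remain $\ge0$ (guaranteed by $\ell\ge0$, so that the interior recurrence may be used). The subtlest point is arguably the non-vanishing of $\det K_{0,r-1}(z)$ on $\overline{\U}$, needed to make the quotient in~\eqref{eq:fctn} meaningful, and this is exactly where assumption~\ref{assumption:nondegenerate} enters.
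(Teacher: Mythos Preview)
Your proof is correct and takes a genuinely different route from the paper's. The paper proceeds by explicit column operations: it factors out $\kappa_i^{\ell\mult_i}$ from each block of columns of $K_{\ell,\ell+r-1}$, then performs a combinatorial column reduction (involving binomial identities) to recover $K_{0,r-1}$, and finally invokes Vieta's formula $\kappa_1^{\mult_1}\cdots\kappa_\nbrmult^{\mult_\nbrmult}=(-1)^r\,a_{-r}/(a_0-z)$ to identify the scalar factor. This entails a case distinction (Step~1: single root of full multiplicity; Step~2: general case) and some bookkeeping with binomial sums.

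Your approach is more structural: you exploit that each column of $K_{i,j}(z)$ is the restriction of a solution to the linear recurrence~\eqref{eq:Ztransform}, so the rows satisfy one fixed linear relation, and a single row operation plus an $r$-cycle yields the one-step identity. The factor $a_{-r}/(a_0-z)$ emerges directly from the recurrence coefficients rather than through Vieta, and multiple roots require no separate treatment. What you gain is uniformity and brevity; what the paper's computation buys is a fully explicit formula $\det K_{\ell,\ell+r-1}/\det K_{0,r-1}=(\kappa_1^{\mult_1}\cdots\kappa_\nbrmult^{\mult_\nbrmult})^\ell$ in terms of the roots themselves, before any appeal to the coefficients.

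One small remark on your justification that $\det K_{0,r-1}(z)\neq0$: the matrix here is not literally a confluent Vandermonde (its columns are $(j^s\kappa^j)$ rather than $(\partial_\kappa^s\kappa^j)$), though the two are related by invertible column operations when $\kappa\neq0$, so your conclusion stands. A perhaps cleaner phrasing is that since $a_{-r}\neq0$ and $a_0-z\neq0$ the recurrence can be run both forward and backward, so any solution is determined by $r$ consecutive values; hence the projection of the $r$-dimensional solution space onto indices $0,\dots,r-1$ is an isomorphism.
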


\begin{proof}
    \textbf{Case with only one root $\kappa$ of multiplicity $\mult$.}
    By Lemma~\ref{lem:hersh} (Hersh), we know that $\mult = r$, but let keep $\mult$ because it will be useful for the next step. 
    We recall that \begin{equation}\label{eq:detK0}\det K_{0,r-1} = \begin{vmatrix} 1 &  0 & \cdots & 0 \\ \kappa & \kappa & \cdots & \kappa\\ \kappa^2 & 2\kappa^2 & \cdots & 2^{\mult-1}\kappa^{2} \\ \vdots & & & \vdots \\ \kappa^{r-1} & (
            r-1)\kappa^{r-1} & \cdots & (r-1)^{\mult-1} \kappa^{r-1} \end{vmatrix}.\end{equation}
    We want to work on
    \begin{align*}\det K_{\ell, \ell+r-1} & = \begin{vmatrix} \kappa^\ell & \ell \kappa^\ell & \cdots & \ell^{\mult-1} \kappa^\ell \\ \kappa^{\ell+1} & (
        \ell +1)\kappa^{\ell+1} & \cdots & (\ell+1)^{\mult-1} \kappa^{\ell+1} \\ \vdots & & & \vdots \\ \kappa^{\ell+r-1} & (
            \ell +r-1)\kappa^{\ell+r-1} & \cdots & (\ell+r-1)^{\mult-1} \kappa^{\ell+r-1} \end{vmatrix}\\
            & = \kappa^{\ell\mult}\begin{vmatrix} 1 & \ell  & \cdots & \ell^{\mult-1}  \\ \kappa & (
                \ell +1)\kappa & \cdots & (\ell+1)^{\mult-1} \kappa\\ \vdots & & & \vdots \\ \kappa^{r-1} & (
                    \ell +r-1)\kappa^{r-1} & \cdots & (\ell+r-1)^{\mult-1} \kappa^{r-1} \end{vmatrix}.
    \end{align*}

    We do some operations on columns to recover \eqref{eq:detK0}. For $n$ from $\mult -1$ to $0$, we replace the column $C_n$ by  $\sum_{k=0}^n (-\ell)^{n-k} \binom{n}{k} C_k$. After the transformation, the component in position $(i,n)$, with $i\in \interval{0}{r-1}$ and $n \in \interval{0}{\mult-1}$, is 
    \begin{align*}
        \sum_{k = 0}^n (-\ell)^{n-k} \binom{n}{k} (\ell+i)^k\kappa^i & = \sum_{k = 0}^n (-\ell)^{n-k} \binom{n}{k} \sum_{s= 0}^{k} \binom{k}{s} \ell^{k-s} i^{s}\kappa^i\\
        & = \sum_{k = 0}^n \sum_{s= 0}^{k} (-\ell)^{n-k} \binom{n}{s}  \binom{n-s}{k-s} \ell^{k-s} i^{s}\kappa^i \\
        & =\sum_{s = 0}^n \binom{n}{s}i^{s}\kappa^i  \sum_{k= s}^{n} (-\ell)^{n-k}   \binom{n-s}{k-s} \ell^{k-s}\\
        & = \sum_{s = 0}^n \binom{n}{s}i^{s}\kappa^i  \underbrace{\sum_{\tilde k= 0}^{n-s} (-\ell)^{n-s-\tilde k}   \binom{n-s}{\tilde k} \ell^{\tilde k}}_{= \delta_{n,s}}= i^{n}\kappa^i.
    \end{align*}
    This is exactly the component in $(i,n)$ of the  matrix $K_{0,r-1}(z)$.
    
    \textbf{General case.}
    We can do the same operation on columns for each root. We take out $\kappa_1^{\ell\mult_1} \cdots \kappa_\nbrmult^{\ell \mult_\nbrmult}$, and for each root $\kappa_j$ with $j \in \interval{1}{M}$, we vary $n_{\kappa_j}$ from $\mult_j -1$ to $0$ and modify columns linked to $\kappa_j$. We regain matrix $K_{0,r-1}(z)$.

    \textbf{Conclusion.}

    We proved $$\dfrac{\det K_{\ell,\ell+r-1}}{\det K_{0,r-1}} = \kappa_1^{\ell\mult_1} \cdots \kappa_\nbrmult^{\ell \mult_\nbrmult}.$$
    Observe that $a_0-z\neq 0$ because $z\in\overline{\U}$ and $a_0\in\D$. Therefore, by Vieta's formulas for the polynomial~\eqref{eq:eqcharac}, we finally have 
    \[\kappa_1^{\mult_1} \cdots \kappa_\nbrmult^{\mult_\nbrmult} = (-1)^r \dfrac{a_{-r}}{a_0-z}.\]
\end{proof}

Lemma~\ref{lem:detKLquotient} implies the holomorphy on $\U$ and continuity on $\overline{\U}$ of the function in~\eqref{eq:fctn}.

For the sake of completeness in the forthcoming proofs, we state hereafter two elementary lemmas. Both are easily deduced from classic properties of the winding number in complex analysis (see~\cite{Lang13}).
\begin{lemma}\label{lem:merompole}
    Let $P$ and $Q$ be two polynomials with $\deg P> \deg Q$. If the function $z \mapsto P(z)Q(z)^{-1}$ is holomorphic on $\U$ then $z \mapsto P(1/z)Q(1/z)^{-1}$ is meromorphic on $\mathbb D$ with only one pole, at the origin of order $\deg P - \deg Q$.
\end{lemma}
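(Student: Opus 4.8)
The plan is to pass to the reciprocal polynomials and read off the order at the origin, exploiting the elementary duality $|w|>1\iff|1/w|<1$ to transport the holomorphy hypothesis across the inversion $z\mapsto 1/z$. First I would set $d_P=\deg P$, $d_Q=\deg Q$ and introduce
\[\tilde P(z)=z^{d_P}P(1/z),\qquad \tilde Q(z)=z^{d_Q}Q(1/z),\]
which are genuine polynomials whose constant terms $\tilde P(0)$, $\tilde Q(0)$ are the leading coefficients of $P$ and $Q$, hence nonzero. For $z\neq 0$ one then has the identity
\[\frac{P(1/z)}{Q(1/z)}=z^{-(d_P-d_Q)}\,\frac{\tilde P(z)}{\tilde Q(z)},\]
and since $\tilde P/\tilde Q$ is holomorphic and nonvanishing at $0$ (its value there being the ratio of leading coefficients), the right-hand side has a pole of order exactly $d_P-d_Q>0$ at the origin. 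This already settles the claim on the order of the pole at $0$, and it uses only $\deg P>\deg Q$.

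The remaining point is that there is no other pole inside $\mathbb D$, i.e. that $\tilde P/\tilde Q$ is holomorphic on all of $\mathbb D$. Here I would argue through multiplicities: a point $z_0\in\mathbb D\setminus\{0\}$ is a pole of $\tilde P/\tilde Q$ only if $\tilde Q$ vanishes at $z_0$ to higher order than $\tilde P$, equivalently $w_0\egdef 1/z_0$ is a zero of $Q$ of strictly higher order than of $P$. But $|w_0|>1$, so $w_0\in\U$, and the hypothesis that $P/Q$ is holomorphic on $\U$ forbids this. Transporting orders back through the biholomorphism $z\mapsto 1/z$ (defined and nonsingular near $z_0\neq 0$) gives $\mathrm{ord}_{z_0}\tilde P\ge \mathrm{ord}_{z_0}\tilde Q$, so $\tilde P/\tilde Q$ extends holomorphically across $z_0$. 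Hence $P(1/z)/Q(1/z)$ is meromorphic on $\mathbb D$ with its only pole at $0$, of order $d_P-d_Q$.

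I do not expect a genuine obstacle here: this is the classical reciprocal-polynomial trick and the computations are routine. The only point deserving a moment of care is the matching of domains — holomorphy is assumed on the \emph{open} exterior $\U=\{|z|>1\}$ while meromorphy is claimed on the \emph{open} disk $\mathbb D$ — and since $z\mapsto 1/z$ maps one exactly onto the other, the unit circle (where $P/Q$ need not be holomorphic) never enters the argument.
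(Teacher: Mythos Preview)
Your argument is correct and is exactly the standard reciprocal-polynomial computation one would expect. Note that the paper does not actually prove this lemma: it is only stated there as a classical fact (``we recall hereafter the two following classical lemmas''), so there is no paper proof to compare against, and your write-up fills that gap cleanly.
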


\begin{lemma}\label{lem:inverseindice}
    Let $f$ be a holomorphic function on $\U$ and continuous on $\overline{\U}$ and $g$ be the function defined on $\overline{\mathbb D}^*$ by $g : z \mapsto f(1/z)$. Then, one has $\mathrm{Ind}_{g(\S)}(0) = - \mathrm{Ind}_{f(\S)}(0)$.
\end{lemma}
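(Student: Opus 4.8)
The plan is to reduce the statement to the elementary fact that reversing the orientation of a closed curve negates its winding number about a point off the curve. Everything else is a change of parameter on the unit circle.

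First I would unpack the definition of $\mathrm{Ind}_{f(\S)}(0)$. Parametrising $\S$ by $\theta\in[0,2\pi]\mapsto e^{i\theta}$, the curve $f(\S)$ is $\gamma_f:\theta\mapsto f(e^{i\theta})$, which is continuous and, since the index is assumed defined, avoids $0$. By the continuous-argument (lifting) theorem there is a continuous $\phi_f:[0,2\pi]\to\R$ with $f(e^{i\theta})=|f(e^{i\theta})|\,e^{i\phi_f(\theta)}$, and by definition $\mathrm{Ind}_{f(\S)}(0)=\tfrac{1}{2\pi}\bigl(\phi_f(2\pi)-\phi_f(0)\bigr)$. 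Here one should phrase the winding number through a determination of the argument of the parametrised curve rather than through a contour integral of $f'/f$, because $f$ is only assumed holomorphic on $\U$ and continuous on $\overline{\U}$, hence need not be differentiable on $\S$ itself.

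Next I would exploit that on $\S$ one has $1/z=\overline{z}$: for $z=e^{i\theta}$, $1/z=e^{-i\theta}$, so $g(e^{i\theta})=f(e^{-i\theta})=\gamma_f(2\pi-\theta)$. Thus the curve $g(\S)$ is obtained from $f(\S)$ by the orientation-reversing reparametrisation $\theta\mapsto 2\pi-\theta$; in particular $g(\S)$ and $f(\S)$ have the same image, so $0\notin g(\S)$ and the left-hand side is also well defined. Then $\theta\mapsto\phi_f(2\pi-\theta)$ is a continuous argument for $\gamma_g$, and therefore
\[
\mathrm{Ind}_{g(\S)}(0)=\frac{1}{2\pi}\bigl(\phi_f(0)-\phi_f(2\pi)\bigr)=-\,\mathrm{Ind}_{f(\S)}(0),
\]
which is the claim.

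There is essentially no genuine obstacle: the only point needing a little care is precisely the one noted above, namely avoiding any appeal to differentiability of $f$ on the unit circle. (If one prefers a smooth-curve argument, one may alternatively remark that $f(\rho\S)\to f(\S)$ uniformly as $\rho\to 1^+$, so $\mathrm{Ind}_{f(\S)}(0)$ coincides with the winding number of the analytic curve $f(\rho\S)$ for $\rho$ close to $1$, and likewise $g(\S)$ is approximated by $f(\rho^{-1}\S)$ for $\rho$ close to $1$; then the substitution $z\mapsto 1/z$ on the corresponding contour integrals gives the sign change. This route is, however, unnecessary given the direct parametric computation.)
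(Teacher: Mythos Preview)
Your argument is correct. The paper itself does not supply a proof of this lemma: it merely introduces it, together with the preceding one, as ``classical lemmas, useful later on'' and states it without justification. So there is no paper proof to compare against in detail; your proposal fills in exactly the kind of elementary verification the authors took for granted. The approach you chose---observing that on $\S$ one has $g(e^{i\theta})=f(e^{-i\theta})$, so that $g(\S)$ is the orientation-reversed reparametrisation of $f(\S)$, and then invoking the lifting definition of the winding number---is the natural one, and your care in avoiding differentiability of $f$ on $\S$ is appropriate given the hypotheses.
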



\subsection{Explicit form of the intrinsic Kreiss-Lopatinskii determinant}

In the previous Lemmas~\ref{lem:algebra} and~\ref{lem:detKLquotient}, the assumption $|a_0|<1$ is made.  This is not a restriction since this is a consequence of the supplemented consistency assumption.

\begin{lemma}\label{lemcoeffa0}
    Let the scheme~\eqref{eqprincip} be Cauchy-stable \ref{assumption:cauchystab} and consistent \ref{assumption:consistency}, then $|a_0|<1$.
\end{lemma}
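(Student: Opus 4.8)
The plan is to prove $|a_0| < 1$ by exploiting the two consistency conditions from \ref{assumption:consistency} together with the Cauchy-stability bound $|\gamma(\xi)| \leqslant 1$. First I would recall that under the totally upwind assumption \ref{assumption:totallyupwind} the symbol is $\gamma(\xi) = \sum_{k=-r}^{0} a_k e^{ik\xi}$, so in particular $a_0 = \frac{1}{2\pi}\int_0^{2\pi} \gamma(\xi)\, d\xi$ is the zeroth Fourier coefficient of $\gamma$. Since $|\gamma(\xi)| \leqslant 1$ everywhere on $[0,2\pi]$, the triangle inequality immediately gives $|a_0| \leqslant 1$. The entire difficulty is therefore to rule out the equality case $|a_0| = 1$.

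The key observation for the strict inequality is that equality in $\left|\frac{1}{2\pi}\int_0^{2\pi}\gamma(\xi)\,d\xi\right| = 1$ with $|\gamma(\xi)| \leqslant 1$ forces $\gamma$ to be a constant function of modulus one: indeed, writing $a_0 = e^{i\phi}$, equality in the integral triangle inequality means $e^{-i\phi}\gamma(\xi)$ is real and nonnegative a.e., and combined with $|\gamma(\xi)| \leqslant 1$ this forces $e^{-i\phi}\gamma(\xi) = 1$ a.e., hence $\gamma \equiv e^{i\phi}$ by continuity. But then all the other Fourier coefficients vanish: $a_k = 0$ for $k = -r, \dots, -1$. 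This contradicts the non-degeneracy assumption \ref{assumption:nondegenerate} ($a_{-r} \neq 0$) as soon as $r \geqslant 1$ — and $r \geqslant 1$ is guaranteed because otherwise the scheme \eqref{eqprincip} would be $U_j^{n+1} = a_0 U_j^n$, which cannot satisfy the first-order consistency condition $-i\gamma'(0) = \sum k a_k = -\lambda$ with $\lambda = a\dt/\dx > 0$. Alternatively, and perhaps more directly, one can invoke the consistency condition $\gamma(0) = \sum_{k=-r}^0 a_k = 1$: if $\gamma$ were constant equal to $e^{i\phi}$, then $e^{i\phi} = \gamma(0) = 1$, so $\gamma \equiv 1$, forcing $a_{-r} = 0$ and again contradicting \ref{assumption:nondegenerate}.

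A cleaner route, avoiding the measure-theoretic equality discussion, uses the derivative condition directly. Suppose for contradiction $|a_0| = 1$. From $|a_0| \leqslant \sum_{k=-r}^{0}|a_k| = |a_0| + \sum_{k=-r}^{-1}|a_k|$ we would not yet get a contradiction, so instead consider $\gamma(0) = 1$: this means $a_0 + \sum_{k=-r}^{-1} a_k = 1$. Since $|a_0| = 1$ and $|1 - a_0| = \left|\sum_{k=-r}^{-1} a_k\right| \leqslant \sum_{k=-r}^{-1}|a_k|$, and since moreover $|\gamma(\pi)| = \left|\sum_{k=-r}^0 a_k(-1)^k\right| \leqslant 1$ and similar evaluations at other points constrain the lower-order coefficients, one derives that the $a_k$ for $k < 0$ must all vanish; the quickest way is: if $a_0$ has modulus $1$ and the Cesàro/Fourier argument above shows $\gamma$ is the constant $a_0$, consistency pins $a_0 = 1$ and non-degeneracy is violated. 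I would present the Fourier-coefficient argument as the main line since it is the shortest.

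The main obstacle is handling the equality case in the integral inequality rigorously and cleanly — one must argue that equality in $|\int \gamma| = \int |\gamma| \cdot$(bound) forces $\gamma$ to be constant, which requires the continuity of $\gamma$ (automatic here, it is a trigonometric polynomial) to upgrade "a.e." to "everywhere". Once $\gamma$ is constant, the contradiction with \ref{assumption:nondegenerate} via either $\gamma(0)=1$ (giving $\gamma \equiv 1$, hence $a_{-r}=0$) or directly via the vanishing of higher Fourier coefficients is immediate. I expect the proof to be only a few lines once the Fourier-coefficient viewpoint is adopted.
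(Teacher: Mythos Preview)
Your approach is correct and follows the same overall strategy as the paper: identify $a_0$ as the zeroth Fourier coefficient of $\gamma$, bound it by $1$ via the triangle inequality and Cauchy-stability, then analyze the equality case. The difference lies in how the equality case is dispatched. The paper only exploits equality in $\bigl|\int\gamma\bigr| = \int|\gamma|$ to deduce that $\gamma$ has constant argument, then uses $\gamma(0)=1$ to conclude $\gamma$ is real-valued; from there it invokes the Fourier symmetry $a_k = a_{-k}$ (forcing $p=r$) and obtains the contradiction from the derivative consistency condition $\sum k a_k = -\lambda \neq 0$. You instead use \emph{both} equalities in the chain $\bigl|\int\gamma\bigr| \leqslant \int|\gamma| \leqslant 2\pi$ to conclude directly that $\gamma$ is constant, so every $a_k$ with $k\neq 0$ vanishes. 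This is a mild but genuine streamlining: it avoids the symmetry detour entirely.

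One small remark: the lemma as stated assumes only \ref{assumption:cauchystab} and \ref{assumption:consistency}, not \ref{assumption:nondegenerate}, so the cleanest way to close your argument is the one you mention parenthetically --- once $\gamma$ is constant, $\gamma'(0)=0$ contradicts $-i\gamma'(0)=-\lambda\neq 0$ --- rather than appealing to $a_{-r}\neq 0$. Your third paragraph (``A cleaner route\dots'') does not actually produce an independent argument and can be dropped; the Fourier-coefficient line is already the shortest.
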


\begin{proof}
    We have 
    \[a_0 = \dfrac{1}{2\pi} \int_0^{2\pi} \sum_{k=-r}^p a_k e^{ik\xi} d\xi.\]
    Integrating on the unit circle, by triangle inequality and Cauchy-stability, we have \[|a_0| \< \dfrac{1}{2\pi} \int_0^{2\pi} \underbrace{\left |\sum_{k=-r}^p a_k e^{ik\xi}\right |}_{\< 1} d\xi\< 1.\]
    Let us assume now the identity $|a_0| = 1$, so that the equality occurs within the previous triangle inequality. Therefore there exists a real-valued function $g$ and a complex $\alpha$ such that for all $\xi\in \R$, we have $\sum_{k = -r}^p a_k e^{ik\xi} = \alpha g(\xi)$. Now at the point $\xi = 0$, we obtain
    $1 = \sum_{k = -r}^p a_k = \alpha g(0)$. Therefore $\alpha$ is real, as well as the symbol $\gamma(\xi)$. Using the complex conjugate we deduce $\sum_{k = -r}^p a_k e^{ik\xi} - \sum_{k=-p}^r a_{-k}e^{ik\xi}=0$ and then by the injectivity of the Fourier coefficients, it follows that $p = r$ and $a_k = a_{-k}$ for all $k \in \{1,\dots,r\}$. Finally, using now the consistency assumption, one has:
    \[ 0 = \sum_{k=-r}^p k a_k = - \lambda \neq 0.\] 
    By this contradiction, the proof is complete.
\end{proof}

Now every piece can be put together to prove  Theorem~\ref{thrm:rationalfraction}.

\begin{proof}[Proof of \Cref{thrm:rationalfraction}]
    Let us recall the function 
    \[\fun{\DKLindep}{z}{\overline{\U}}{\dfrac{\det BK_{-r,m-1}(z)}{\det K_{0,r-1}(z)}}{\C}.\]
    By \Cref{lemcoeffa0}, we will be able to use \Cref{lem:algebra} and  \Cref{lem:detKLquotient}.
    With \Cref{lem:algebra} and \Cref{rem:BK=CK}, we express $\DKLindep$ as 
    \begin{equation}\label{eq:calculproofDelta1}\DKLindep(z) = \dfrac{\det C(z) \det K_{m-r,m-1}(z)}{\det K_{0,r-1}(z)}\end{equation}
    where $C(z)$ is polynomial with respect to $z$.

    By \Cref{lem:detKLquotient}, we have 
    \begin{equation}\label{eq:calculproofDelta2}
        \dfrac{\det K_{m-r,m-1}(z)}{\det K_{0,r-1}(z)} = (-1)^{r(m-r)} \left (\frac{a_{-r}}{a_0-z}\right )^{m-r}.
    \end{equation}

    By \Cref{lemcoeffa0}, $a_0$ cannot be a pole of $\DKLindep$, then the function $\DKLindep$ can be written, for $z \in \overline{\U}$,  as

    \begin{equation}\label{eq:DetKLindep} \DKLindep(z) = (-1)^{r(m-r)}\det C(z)\left (\dfrac{a_{-r}}{a_0 - z}\right )^{m-r}, \end{equation}
    where $\det C(z)$ is a polynomial of $z$ and $(a_0 - z)$ does not vanish because $z\in \overline{\U}$ and $a_0 \in \D$. 
\end{proof}

    The proof of \Cref{thrm:nbrzerodet} relies on the residue theorem to count the zeros of a holomorphic function.

\begin{proof}[Proof of \Cref{thrm:nbrzerodet}]
    By \Cref{thrm:rationalfraction}, the function $\DKLindep$ is holomorphic on $\U$ and continuous on $\overline{\U}$.

    Let take the function
    \[\fun{\DKLindepinvers}{z}{\D^*}{\DKLindep(1/z)}{\C.}\]
    \textbf{The function $\DKLindepinvers$ is meromorphic on $\D$ with a pole in 0 of order $r$.}

    By \Cref{lem:algebra}, we have $\deg \det C(z) = m$ because the $r$ first columns of $B$ form the identity matrix of size $r$ which is invertible.
    By \Cref{lem:merompole} with $P = \det C(z)$ and $Q = (-1)^{r(m-r)}  \frac{(a_0-z)^{m-r}}{a_{-r}^{m-r}}$, the only pole of $\DKLindepinvers$ is in 0 and of order $\deg\det C(z) - (m-r) = m - (m-r) = r$.

    \textbf{Residue theorem on $\DKLindepinvers$}
    The function $\DKLindep$ is continuous on $\overline{\U}$, then the function $\DKLindepinvers$ is continuous on $\overline{\D}^*$. We can use the residue theorem on $\DKLindepinvers$ with the unit circle $\mathbb S$ as loop around 0. Then we have
    \[ \mathrm{Ind}_{\DKLindepinvers(\S)}(0) = \# \text{zeros}_{\DKLindepinvers}(\D) - \# \text{poles}_{\DKLindepinvers}(\D).\]

    \textbf{Conclusion}
    We have $\#\text{zeros}_{\DKLindep}(\U) = \#\text{zeros}_{\DKLindepinvers}(\D)$
    and, by \Cref{lem:inverseindice}, we have $\mathrm{Ind}_{\DKLindepinvers(\S)}(0) = - \mathrm{Ind}_{\DKLindep(\S)}(0)$. It follows that
    \[\#\text{zeros}_{\DKLindep}(\U) = \underbrace{\# \text{poles}_{\DKLindepinvers}(\D)}_{r}- \mathrm{Ind}_{\DKLindep(\S)}(0).\]

This concludes the proof.
\end{proof}


\section{Numerical results}\label{sec:numerical}

In this section, we first explain the numerical computation of the winding number of the origin in order to use Corollary~\ref{thrm:nbrzerodet} and Method~\ref{proc:numericalprocedure}. 
The simplest first order upwind scheme is then quickly treated, but for a general three-points boundary condition. Next, a main class of high-order boundary conditions, known as the simplified Lax-Wendroff procedure, is presented. They will be used together with the Beam-Warming scheme. After introducing the Beam-Warming scheme, we present computations of Kreiss-Lopatinskii determinant and numerical illustrations.
Finally, we study the stability of discretizations where the physical boundaries are not aligned with the mesh.

\subsection{Computation of the winding number}\label{sec:windingnbr}

In the forthcoming numerical illustrations, the interest of Method~\ref{proc:numericalprocedure} is showcased. Indeed, \Cref{thrm:nbrzerodet} makes the link between the number of zeros of a holomorphic function and the winding number of a curve which is easy to compute. In fact, as an integer is expected, the approximation of the winding number is generally more reliable, contrary to a real or complex computation because of machine precision.

When the origin is not on the curve, there are different ways to compute the winding number of the origin with respect to the curve. Either we can apply the definition and compute approximately a complex integral, or we can count the number of paths around the origin by using a polygonal approximation of the curve. 
The second approach is studied by Garcia-Zapata and Martin \cite{Zapata12,Zapata14} with a careful numerical treatment that consists in detecting the possible proximity of the curve to the origin.
To that aim, the discretization of the curve is locally refined by an so-called "insertion procedure with control of singularity".
Indeed, by the explicit formula \eqref{eq:explicitformula} of the intrinsic Kreiss-Lopatinskii determinant, the curve $\DKLindep(\S)$ is clearly parameterized by the lipschitz function $\DKLindep$, thus satisfies the required assumptions from the result in \cite{Zapata12} and \cite{Zapata14}.


\subsection{Upwind scheme}\label{sec:Upwindnum}

The easiest example of totally upwind scheme is the usual first-order upwind scheme defined, for $j\in \N$ and $n \in \N$, by
\begin{equation}\label{eq:upwindint}
    U_j^{n+1} = \lambda U_{j-1}^n + (1-\lambda) U_j^n
\end{equation}

and supplemented at the boundary, for example, by
\begin{equation}\label{eq:upwindbord}
    U_{-1}^n = b_0 U_0^n + b_1 U_1^n + b_2 U_2^n
\end{equation}
with arbitrary coefficients $b_0$, $b_1$ and $b_2$.
For that scheme, we have $r=1$, $m=3$ and the characteristic equation \eqref{eq:eqcharac} reads \begin{equation}\label{eq:eqcharacupwind}z\kappa(z) = \lambda + (1-\lambda)\kappa(z).\end{equation} 
The scheme is Cauchy-stable for $\lambda \in ]0,1]$ and one can check that for $0<\lambda \<1$ and for $|z|>1$, the root $\kappa(z)$ of \eqref{eq:eqcharacupwind} is in $\D$ by Lemma~\ref{lem:hersh} (Hersh).

Let us now execute the computation of the intrinsic Kreiss-Lopatinskii determinant, as presented in~Lemma~\ref{lem:algebra} (here $N=4$):

\begin{align*}
    & B^{(0)} = \begin{pmatrix} 1 & -b_0 & -b_1& -b_2 \end{pmatrix}\\
    \leadsto & B^{(1)} = \begin{pmatrix} 0 & -b_0-\frac{1-\lambda-z}{\lambda} & -b_1 & -b_2\end{pmatrix} \\
    \leadsto & B^{(2)} = \begin{pmatrix} 0 & 0 & -b_1 + (b_0+\frac{1-\lambda-z}{\lambda})\frac{1-\lambda-z}{\lambda} & -b_2 \end{pmatrix} \\
    \leadsto & B^{(3)} = \begin{pmatrix} 0 & 0 & 0& -b_2 + (b_1 - (b_0+\frac{1-\lambda-z}{\lambda})\frac{1-\lambda-z}{\lambda})\frac{1-\lambda-z}{\lambda} \end{pmatrix} 
\end{align*}

It follows that $\det C(z) = -b_2 + (b_1 - (b_0+\frac{1-\lambda-z}{\lambda})\frac{1-\lambda-z}{\lambda})\frac{1-\lambda-z}{\lambda}$. 

Hence, the explicit formula~\eqref{eq:explicitformula} reads as follows:
\begin{equation*}
    \DKLindep(z)  = (-1)^{2}\det C(z)\left (\dfrac{a_{-r}}{a_0 - z}\right )^{2} = - \dfrac{b_2 \lambda^2}{(1-\lambda - z)^2} + \dfrac{b_1\lambda}{1-\lambda -z} - b_0 - \dfrac{1-\lambda - z}{\lambda}.
\end{equation*}

A similar computation can be achieved for boundary conditions with larger $m$ and/or for totally upwind schemes with a larger stencil (see below for the Beam-Warming scheme).


\subsection{Simplified inverse Lax-Wendroff procedure}\label{sec:SILWprocedure}

As explained in \cite{Tan10} and \cite{Vilar15}, the inverse Lax-Wendroff procedure is used to improve the consistency at the boundary by using the PDE to transform space derivative into time derivative. Namely, for the advection equation \eqref{eq:advection}, the following relation holds, for $k \in \N^*$,
\[\dfrac{\partial^k u}{\partial x^k}  = \dfrac{(-1)^k}{a^k} \dfrac{\partial^k u}{\partial t^k}.\]
By a Taylor expansion at order $d$ to approximate $u(n\dt,j\dx)$ for $n\in \N$ and $j\in \interval{-r}{-1}$, one can then define the ghost points used in the boundary condition \eqref{eqbord} by
\begin{equation*}
    U_{j}^n  = \sum_{k=0}^{d-1} \dfrac{(j\dx)^k}{k!} \dfrac{\partial^ku}{\partial x^k}(n\dt,0)  = \sum_{k=0}^{d-1} \dfrac{(j\dx)^k}{k!} (-1)^k\dfrac{g^{(k)}(n\dt)}{a^k}.
\end{equation*}
However, many derivatives of the datum $g$ are requireobtain a high order approximation and the complexity then severely increases for multidimensional situations. As explained in \cite{Vilar15}, the simplified inverse Lax-Wendroff procedure of order $d$ with simplified order $k_d$ that we call “S$k_d$ILW$d$” may be used when derivatives of $g$ are not known. Therefore, the first $k_d-1$ derivatives of $g$ are considered and then for the next terms between order $k_d$ and $d$, an extrapolation procedure is used. Finally the general formula is, for $j \in \interval{-r}{-1}$, the following one
\begin{equation}\label{eq:SILWproc}
    U_{j}^n = \sum_{k=0}^{k_d-1} \dfrac{(-j\dx)^k}{k!} \dfrac{g^{(k)}(n\dt)}{a^k}+ \sum_{k=k_d}^{d-1} \dfrac{j^k}{k!} \sum_{s=0}^{k} \binom{k}{s} (-1)^{k-s} U_{s}^n.
\end{equation}


\subsection{Beam-Warming scheme}

The Beam-Warming scheme with simplified inverse Lax-Wendroff of order 3 and simplified order 2 reads
\begin{equation}\label{eq:BW}\begin{cases}
    U_{j}^{n+1} = \dfrac{\lambda 	(\lambda-1)}{2} U_{j-2}^n + \lambda (2 - \lambda) U_{j-1}^n + \dfrac{(\lambda -1)(\lambda -2)}{2}U_j^n,\\
    U_{-1}^n = g(t^n) + \dfrac{\dx g'(t^n)}{a}+ \dfrac{1}{2}(U_2^n-2U_1^n+U_0^n)
    ,  \\
    U_{-2}^n = g(t^n) + \dfrac{2\dx g'(t^n)}{a}+ 2(U_2^n-2U_1^n+U_0^n)
    ,\\
    U_j^0 = 0.
\end{cases}\end{equation}

This scheme satisfies Assumptions~\ref{assumption:totallyupwind} and~\ref{assumption:consistency}. To have the Cauchy-stability assumption~\ref{assumption:cauchystab}, we study the symbol with respect to the CFL condition $\lambda$. From \eqref{eq:BW}, the symbol is 
\[\symbole(\xi) = \frac{\lambda(\lambda-1)}{2} e^{-2i\xi} + \lambda (2 - \lambda) e^{-i\xi} + \frac{(\lambda -1)(\lambda -2)}{2}.\]
In the Figure~\ref{fig:BWsymbol}, this symbol is represented for $\lambda = 1.8$.
     
\begin{figure}[!ht]
    \centering
    \includegraphics[trim = 2cm 1.4cm 2.5cm 2.1cm, clip, width = 10cm]{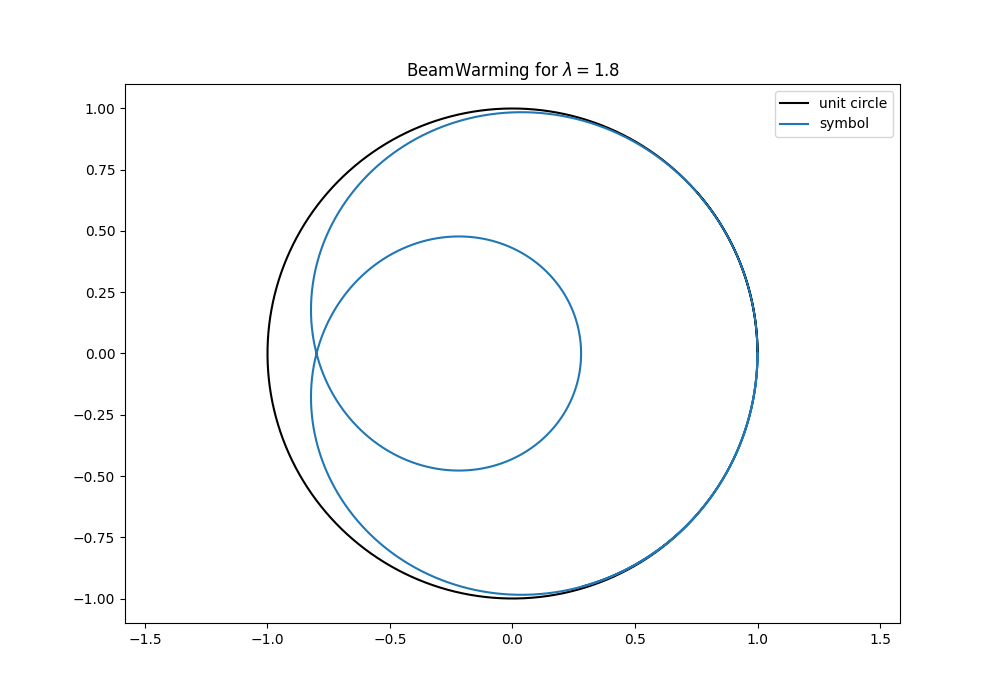}
    \caption{Symbol of Beam-Warming scheme for $\lambda = 1.8$.}\label{fig:BWsymbol}
\end{figure}

\begin{prop}
    The Beam-Warming scheme is Cauchy-stable if and only if 
    $0<\lambda\<2.$
\end{prop}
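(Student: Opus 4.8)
The plan is to reduce assumption~\ref{assumption:cauchystab} — the requirement $|\symbole(\xi)|\le 1$ for all real $\xi$ — to an elementary sign condition by computing $|\symbole(\xi)|^2$ in closed form. Write the symbol as $\symbole(\xi)=a_{-2}e^{-2i\xi}+a_{-1}e^{-i\xi}+a_0$ with $a_{-2}=\tfrac{\lambda(\lambda-1)}{2}$, $a_{-1}=\lambda(2-\lambda)$, $a_0=\tfrac{(\lambda-1)(\lambda-2)}{2}$. Since these coefficients are real, $|\symbole(\xi)|^2=\sum_{k,l}a_k a_l e^{i(k-l)\xi}$ is a real trigonometric polynomial of degree~$2$, hence a polynomial of degree $\le 2$ in $\cos\xi$. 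Expanding and collecting terms (using $\cos 2\xi\cos\xi+\sin 2\xi\sin\xi=\cos\xi$ and $\cos 2\xi=2\cos^2\xi-1$) yields
\[
    |\symbole(\xi)|^2 = 4a_{-2}a_0\cos^2\xi + 2a_{-1}(a_{-2}+a_0)\cos\xi + \bigl((a_{-2}-a_0)^2+a_{-1}^2\bigr).
\]

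The second step is the algebraic simplification, which is where almost all the work lies. The key observations are $a_{-2}-a_0=\lambda-1$ and $a_{-2}+a_0=(\lambda-1)^2$, and also $4a_{-2}a_0=\lambda(\lambda-1)^2(\lambda-2)$. Substituting these, completing the square in $\cos\xi$, and checking that the resulting constant term collapses to $1$ (a short computation: $-\lambda(\lambda-1)^2(\lambda-2)+(\lambda-1)^2+\lambda^2(\lambda-2)^2 = 1$), I expect the clean identity
\[
    |\symbole(\xi)|^2 = 1 + \lambda(\lambda-1)^2(\lambda-2)\,(1-\cos\xi)^2 = 1 + 4\lambda(\lambda-1)^2(\lambda-2)\sin^4(\xi/2).
\]

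From this formula both implications are immediate. The correction term is $\le 0$ for every $\xi\in\R$ if and only if $\lambda(\lambda-1)^2(\lambda-2)\le 0$; since $(\lambda-1)^2\ge 0$ and $\lambda=a\dt/\dx>0$, this is equivalent to $\lambda-2\le 0$, i.e. $0<\lambda\le 2$, which then forces $|\symbole(\xi)|\le 1$ everywhere. Conversely, if $\lambda>2$ then at $\xi=\pi$ one gets $|\symbole(\pi)|^2=1+4\lambda(\lambda-1)^2(\lambda-2)>1$, so~\ref{assumption:cauchystab} fails. This proves the equivalence.

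The only genuine obstacle is the reduction of $|\symbole(\xi)|^2$ to the single-term form; everything afterwards is bookkeeping. A slightly shorter route to the same identity uses the consistency-based factorization $\symbole(\xi)=1-\lambda\zeta+\tfrac{\lambda(\lambda-1)}{2}\zeta^2$ with $\zeta=1-e^{-i\xi}$ together with $\zeta+\bar\zeta=|\zeta|^2=4\sin^2(\xi/2)$, but the direct expansion in $\cos\xi$ is the most self-contained and is the one I would present.
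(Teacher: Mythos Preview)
Your proof is correct and follows essentially the same approach as the paper: both arrive at the identity $|\symbole(\xi)|^2 = 1 - \lambda(2-\lambda)(\lambda-1)^2(1-\cos\xi)^2$ and read off the sign condition. The only cosmetic difference is that the paper first factors out $e^{-i\xi}$ before squaring, whereas you expand $|\symbole(\xi)|^2$ directly as a quadratic in $\cos\xi$ and then complete the square; your version is in fact more explicit about the ``easy computations'' the paper omits.
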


Even if it is a classic result, we recall  the outline of the proof. 
\begin{proof}
    While computing the symbol, we have, for all $\xi \in \R$,
    \begin{equation*}
        \symbole(\xi)  = \dfrac{(\lambda -1)(\lambda -2)}{2} + \lambda (2 - \lambda)e^{-i\xi} + \dfrac{\lambda(\lambda-1)}{2} e^{-2i\xi} 
         = e^{-i\xi}\left (\lambda(\lambda-1)\cos\xi + \lambda(2-\lambda) - (\lambda-1)e^{i\xi} \right ).
    \end{equation*}
    Thus, the modulus of the symbol is after some easy computations
    \begin{equation*}
        |\symbole(\xi)|^2  = 1 - \lambda(2-\lambda)(\lambda-1)^2(1-\cos \xi)^2.
    \end{equation*}

    To be Cauchy-stable, we must have $|\symbole(\xi)|^2 \< 1$, so we want to have $\lambda(2- \lambda)(\lambda-1)^2 (1-\cos \xi)^2 \>0$. Because $\lambda>0$, then the condition is $\lambda\<2$.
\end{proof}

The non-degeneracy assumption \ref{assumption:nondegenerate} is related to the value $r = 2$ for $\lambda \in ]0,2]\setminus\{1\}$ and to the value $r=1$ for $\lambda = 1$.
This example will be useful to illustrate the theory, especially in the following subsection.


\subsection{Kreiss-Lopatinskii determinant computation for Beam-Warming scheme}\label{sec:BWexamplenumerical}
First, we compute the Kreiss-Lopatinskii determinant $\DKL$ from Definition~\ref{def:detKL} for the Beam-Warming scheme with S2ILW3 boundary condition as in \eqref{eq:BW}. Assuming that the the roots of \eqref{eq:eqcharac} are distinct for a given $|z|\> 1$, we have
    \[\DKL(z) = \det \begin{pmatrix} \kappa_1^{-2} -2 + 4\kappa_1 -2 \kappa_1^2 & \kappa_2^{-2} -2 + 4\kappa_2 - 2 \kappa_2^2 \\ \kappa_1^{-1} - \frac{1}{2} +\kappa_1 - \frac{\kappa_1^2}{2} & \kappa_2^{-1} - \frac{1}{2} +\kappa_2 - \frac{\kappa_2^2}{2}\end{pmatrix}.\]
    If there is one single root with multiplicity 2, then we have 
    \[\DKL(z) = \det \begin{pmatrix} \kappa_1^{-2} -2 + 4\kappa_1 -2 \kappa_1^2 & -2\kappa_1^{-2}  + 4\kappa_1 - 4 \kappa_1^2 \\ \kappa_1^{-1} - \frac{1}{2} +\kappa_1 - \frac{\kappa_1^2}{2} & -\kappa_1^{-1}  +\kappa_1 - \kappa_1^2\end{pmatrix}.\]

In the rest of this section, we continue the example of the Beam-Warming scheme \eqref{eq:BW} so as to illustrate practically the algebraic transformation set up in \Cref{lem:algebra}.

For that scheme, the corresponding $\mathcal Z$-transformed equation \eqref{eq:Ztransform} is, for $j\in \N$,
$$z\widetilde{U}_j(z) = a_{-2}\widetilde{U}_{j-2}(z) + a_{-1}\widetilde{U}_{j-1}(z) + a_0\widetilde{U}_j(z),$$
involving the coefficients $a_0 = \frac{(\lambda - 1)(\lambda -2)}{2}$, $a_{-1} = \lambda (2-\lambda)$ and $a_{-2} = \frac{\lambda (\lambda-1)}{2}$.

Let us denote in the following lines $\alpha \egdef \frac{-a_{-1}}{a_{-2}}$ and $\beta \egdef \frac{z-a_0}{a_{-2}}$ so that the linear recurrence relation has now, for $j \in \N$, the form below:
\begin{equation}\label{eq:recurrBW}
    \widetilde{U}_{j-2}(z) = \alpha \widetilde{U}_{j-1}(z) + \beta\widetilde{U}_{j}(z).
\end{equation}

The considered boundary condition involves the following matrix:
\[B = \begin{pmatrix} 1 & 0 & -2& 4&-2 \\ 0 & 1 & -\tfrac 1 2 &1 & -\tfrac 1 2 \end{pmatrix}\]

with dimensions $r= 2$ and $N = 5$. With the notations in the proof of \Cref{lem:algebra}, let us now construct the matrix $C(z) = B^{(3)}[1:2,4:5]$. To that aim, we transform successively the matrix $B$ so as to keep unchanged the vector $B \hspace{-0.5mm}\left (\widetilde{U}_{j-2}(z)\widetilde{U}_{j-1}(z) \widetilde{U}_{j}(z)\widetilde{U}_{j+1}(z)\widetilde{U}_{j+2}(z)\hspace{-0.5mm}\right )^T$ thanks to the recurrence relation~\eqref{eq:recurrBW}. Hereafter are the steps:
\begin{align*}
    & B^{(0)} = \begin{pmatrix} 1 & 0 & -2& 4& -2 \\ 0 & 1 & -\frac{1}{2}&1 & -\frac{1}{2} \end{pmatrix}\\
    \leadsto & B^{(1)} = \begin{pmatrix} 0 & \alpha & -2+\beta& 4&-2 \\ 0 & 1 & -\frac{1}{2} &1 & -\frac{1}{2} \end{pmatrix} \\
    \leadsto & B^{(2)} = \begin{pmatrix} 0 & 0 & -2+\beta + \alpha^2& 4 +\alpha \beta&-2 \\ 0 & 0 & -\frac{1}{2} + \alpha &1 + \beta &-\frac{1}{2} \end{pmatrix} \\
    \leadsto & B^{(3)} = \begin{pmatrix} 0 & 0 & 0& 4  +\alpha \beta + \alpha(-2+\beta + \alpha^2)&-2 +\beta(-2+\beta +\alpha^2) \\ 0 & 0 & 0 &1 + \beta +\alpha(-\frac{1}{2} + \alpha) & - \frac{1}{2} + \beta(-\frac{1}{2} + \alpha) \end{pmatrix} 
\end{align*}
From there, it follows that
\[
    C(z) = \begin{pmatrix} 4  +\alpha \beta + \alpha(-2+\beta + \alpha^2)&-2 +\beta(-2+\beta +\alpha^2) \\ 1 + \beta +\alpha(-\frac{1}{2} + \alpha) & - \frac{1}{2} + \beta(-\frac{1}{2} + \alpha) \end{pmatrix},
\]
and thus
 \begin{align*}
    \det C(z) & =  (4  +\alpha \beta + \alpha(-2+\beta + \alpha^2))(- \tfrac{1}{2} + \beta(-\tfrac{1}{2} + \alpha)) \\& \hspace{2cm} - (1 + \beta +\alpha(-\tfrac{1}{2} + \alpha))(-2 +\beta(-2+\beta +\alpha^2))
    \\ & = -\beta^3 + \beta^2 + 2\beta - \alpha \beta^2/2 + 3\alpha \beta  - \alpha^2\beta -  2\alpha^2 - \alpha^3/2.
\end{align*}

The intrinsic Kreiss-Lopatinskii determinant explicit formula \eqref{eq:DetKLindep} (with here $m=3$ and $r=2$) is the following:
\[\DKLindep(z) = \dfrac{-1}{\beta}(-\beta^3 + \beta^2 + 2\beta - \alpha \beta^2/2 + 3\alpha \beta  - \alpha^2\beta -  2\alpha^2 - \alpha^3/2).\]

On Figure~\ref{fig:DetKLBWS2ILW3}, the curve $\DKLindep(\mathbb S)$ is represented successively for different values of the CFL parameter~$\lambda$. The goal is to compute the winding number of $0$, concerned with Corollary~\ref{thrm:nbrzerodet} in order to tackle stability thanks to the Theorem~\ref{thrmKreiss} (Kreiss). A premultiplication of the quantity $\DKLindep$ by $a_{-2}^{2}$ may reduce the order of magnitude of the curves, without changing the winding number. The left and right figures correspond to the case with or without rescaling.

By \Cref{thrm:nbrzerodet}, we have $\# \mathrm{zeros}_{\DKLindep} = r - \Ind_{\DKLindep(\S)}(0)$ but after dividing $\DKLindep$ by $z^r$:
$$\DKLindepdivided : z \mapsto \DKLindep(z) / z^r,$$
we obtain $\# \mathrm{zeros}_{\DKLindep} = - \Ind_{\DKLindepdivided(\S)}(0)$, because $\Ind_{\DKLindepdivided(\S)}(0) = \Ind_{\DKLindep(\S)}(0) - r$, see Figure~\ref{fig:dividedDetKLBWS2ILW3}.

\begin{figure}

    \begin{minipage}[b]{0.48\linewidth}
     \centering
     \includegraphics[trim = 2.07cm 1.7cm 2.51cm 2.41cm, clip, width = 7cm]{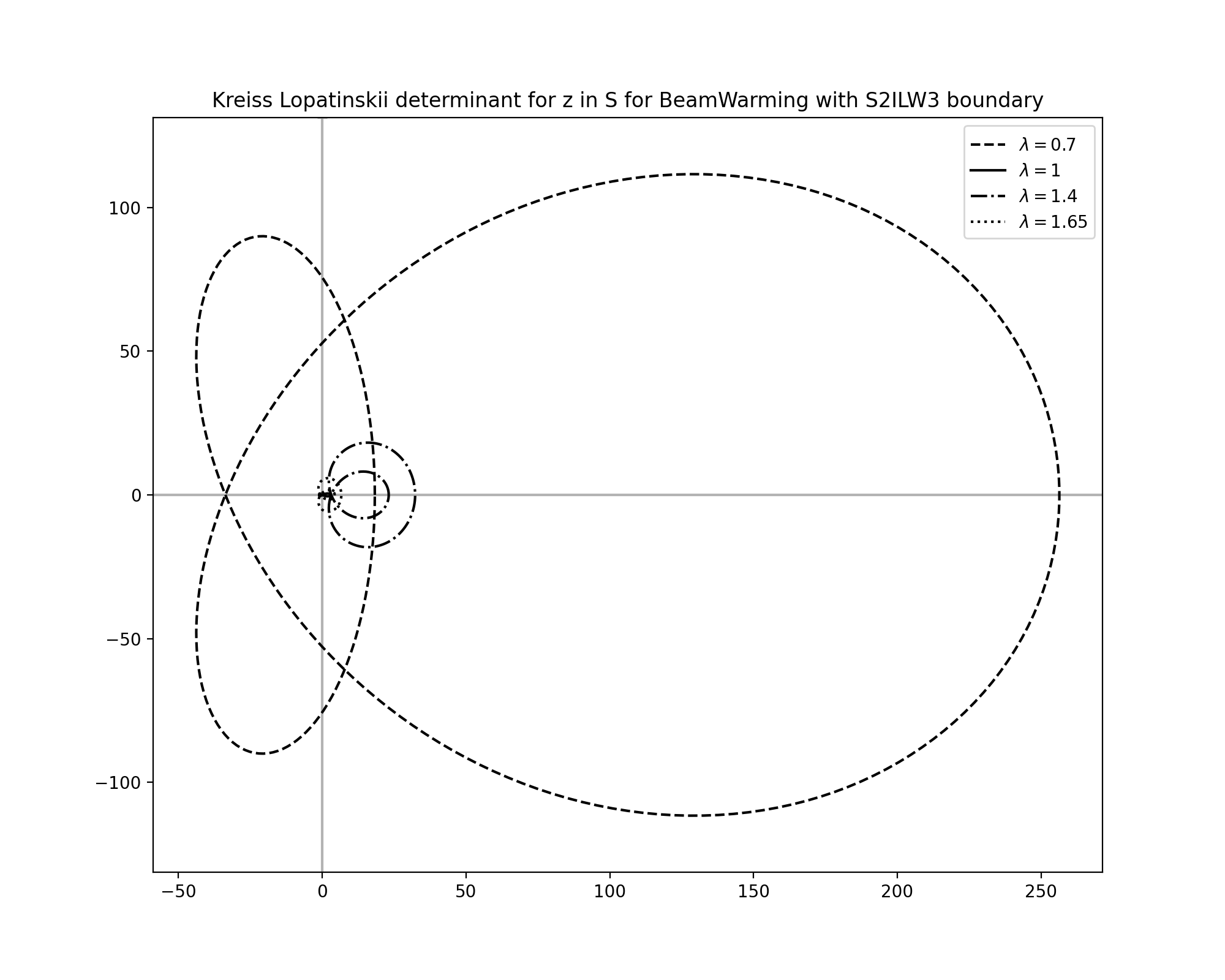}    
    \end{minipage}
  \hfill
    \begin{minipage}[b]{0.48\linewidth}
     \centering
     \includegraphics[trim = 2.12cm 1.7cm 2.51cm 2.41cm, clip, width = 7cm]{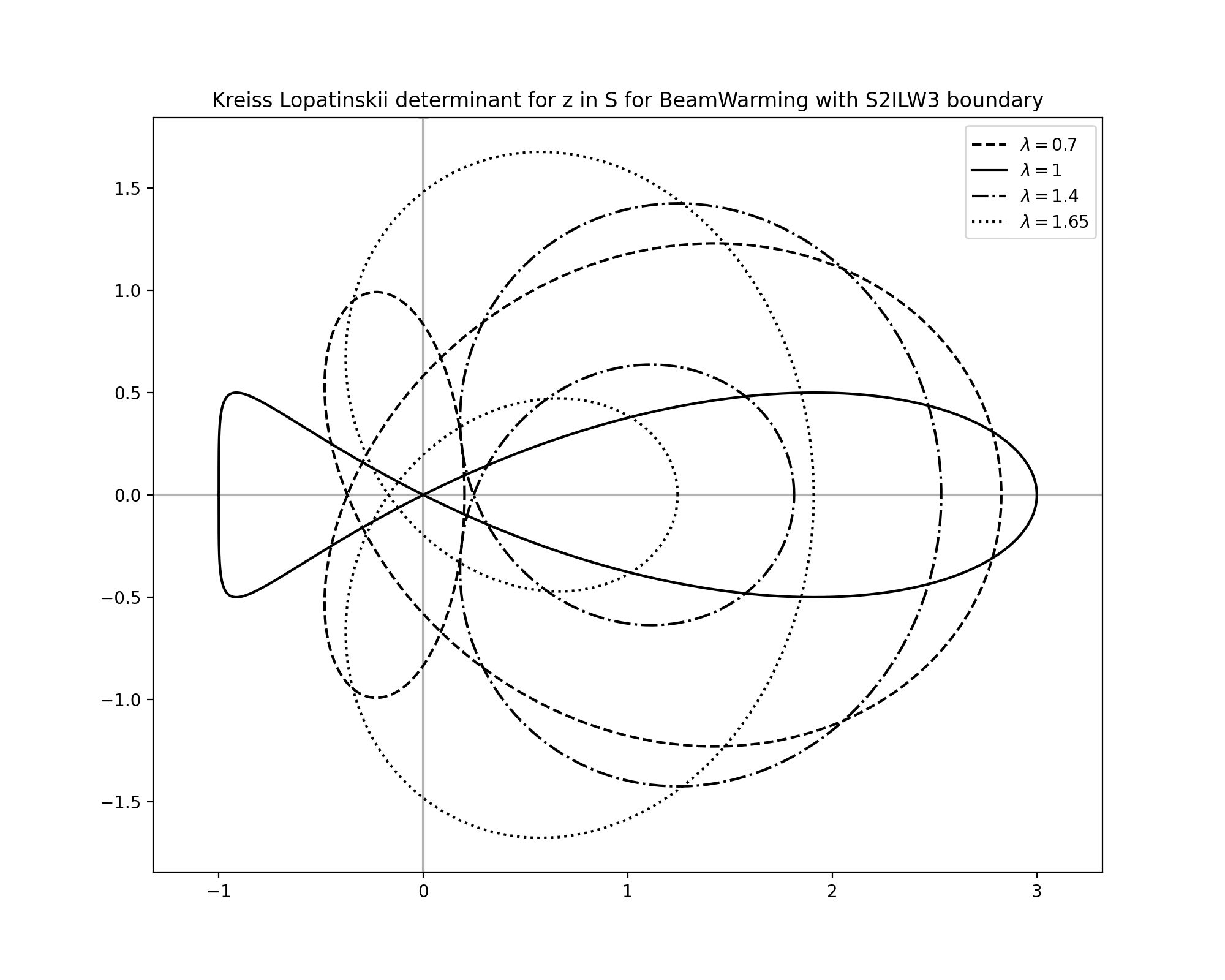}   
    \end{minipage}
  
    \caption{Kreiss-Lopatinskii Determinant $\DKLindep$ when $z$ is on $\S$ for scheme \eqref{eq:BW} for $\lambda \in \{0.7,1,1.4,1.65\}$ (left) and the rescaled one $a_{-2}^2 \DKLindep$ (right).}
    \label{fig:DetKLBWS2ILW3}
  
  \end{figure}

\begin{figure}
    \centering
    \includegraphics[trim = 2.12cm 1.7cm 2.51cm 2.41cm, clip, width = 12cm]{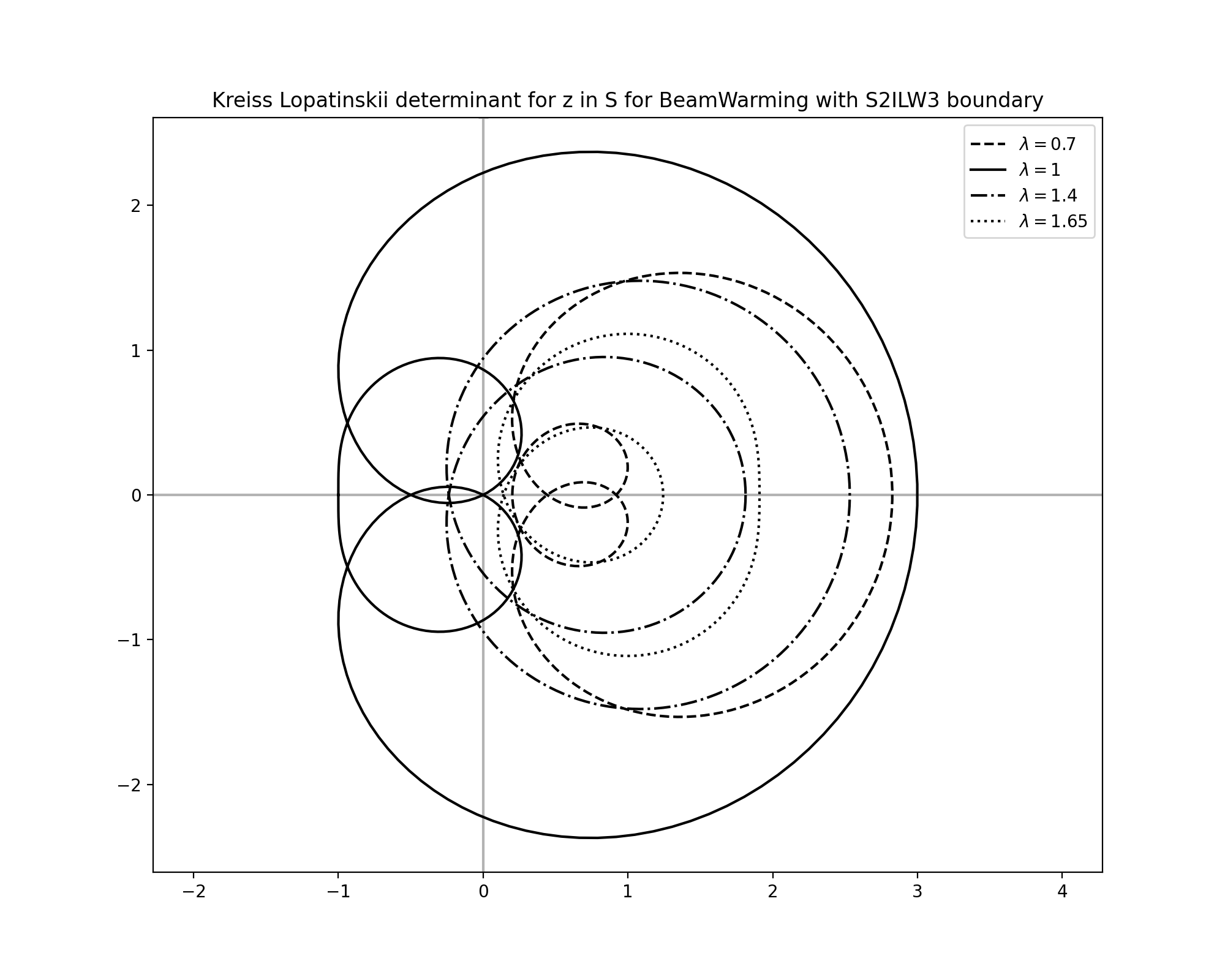}
    \caption{Rescaled Kreiss-Lopatinskii determinant $\frac{a_{-2}^2 \DKLindep}{z^2}$ for $z$ in $\S$.}\label{fig:dividedDetKLBWS2ILW3}
\end{figure}

A particular situation occurs for $\lambda = 1$, since $a_{-2}=0$ and assumption~\ref{assumption:nondegenerate} fails if we consider $r=2$. In that case, the equation \eqref{eq:Ztransform} reads $\widetilde{U}_{j-1}(z) = z \widetilde{U}_j(z)$ which is the Beam-Warming scheme for $\lambda = 1$ after $\mathcal Z$-transform. Finally, in that case, we find $\det C(z) = (\tfrac{1}{2} + z(-1+z(\tfrac 1 2 -z)))$ that we must multiply by $\frac{1}{\beta^2} = \frac{1}{z^2}$ to find the Kreiss-Lopatinskii determinant (because $m = 3$, $r=1$ and $\beta = \frac{z-a_0}{a_{-1}} = z$).

All these computations can be done for different boundary conditions and after drawing the curves, the winding number can be computed, as explained in Section~\ref{sec:windingnbr}, 
to tackle stability and that the purpose of the following subsection.


\subsection{Numerical illustration}

Figure~\ref{fig:dividedDetKLBWS2ILW3} may help to tackle the stability of the scheme~\eqref{eq:BW} as we said in Section~\ref{sec:numericalprocedureeig}, indeed, as we said in Section~\ref{sec:windingnbr}, one can compute the winding number using a numerical procedure \cite{Zapata12} and draw the winding number with respect to~$\lambda$, as seen in Figure~\ref{fig:windingnumberBWS2ILW3} for the case S2ILW3. It simplifies the observation of the number of zeros of the Kreiss-Lopatinskii determinant. Hence, the numerical experiments indicate that the scheme \eqref{eq:BW} is strongly stable for $\lambda \in ]0,1[$ but also for $\lambda \in ]1.52,1,78[$ approximately, but is unstable outside these domains.

\begin{figure}[!ht]
    \centering
    \includegraphics[trim = 3.1cm 1.4cm 3.2cm 2.74cm, clip, width = 10cm]{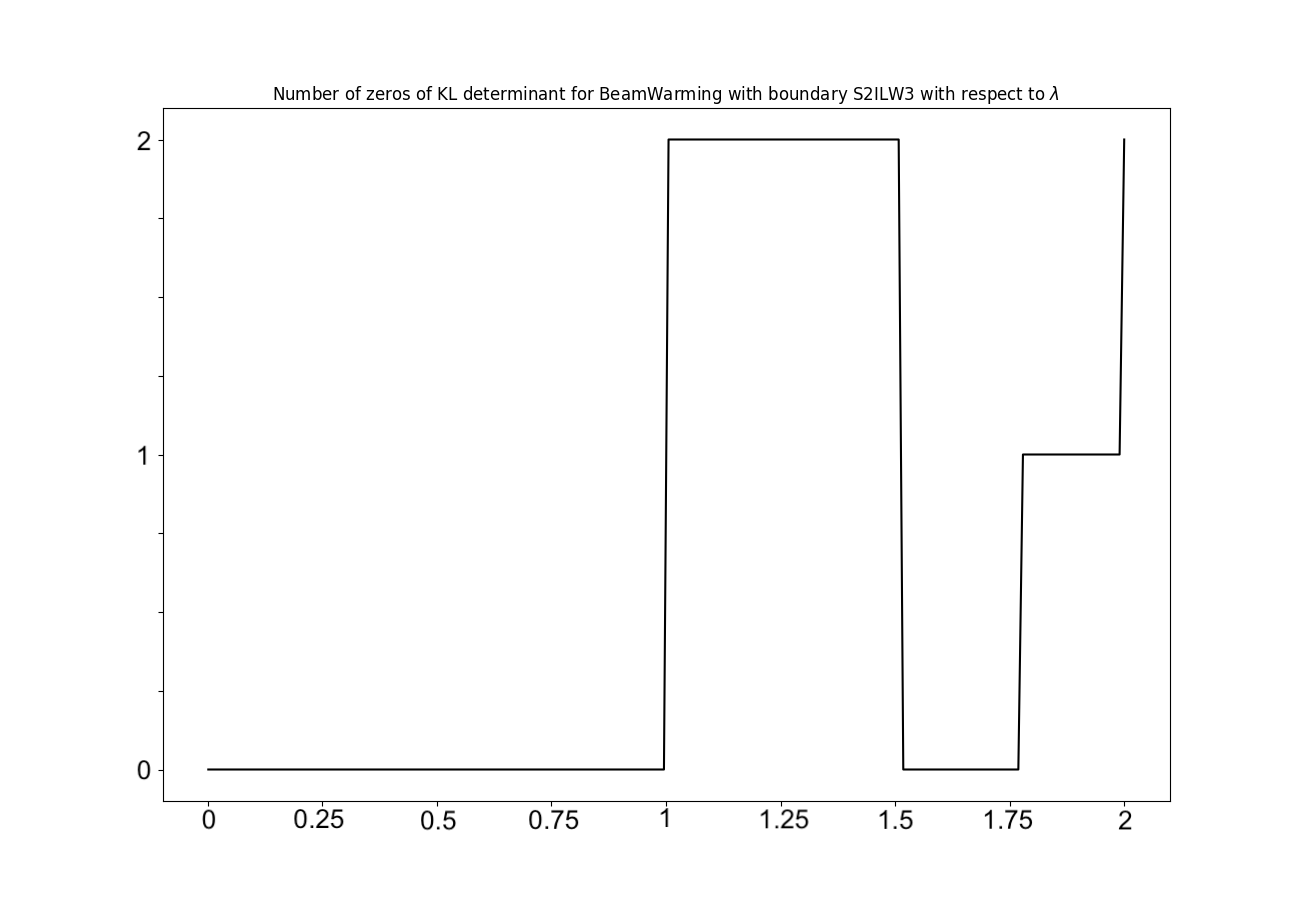}
    \caption{Number of zeros of Kreiss-Lopatinskii determinant  with respect to $\lambda$ for Beam-Warming scheme~\eqref{eq:BW} with S2ILW3 boundary condition.}\label{fig:windingnumberBWS2ILW3}
\end{figure}

Moreover, instead of taking the Y-axis to represent the number of zeros of the Kreiss-Lopatinskii determinant and having a step function, one can draw areas and compute it for other simplified inverse Lax-Wendroff boundary conditions (defined by the equation~\eqref{eq:SILWproc}) as done in Figure~\ref{fig:NbrZerosSILW}.
Note that the stability domain contains a full interval of the form $]0,\lambda_\star[$, but also another disjoint interval included in $]1,2[$ (except for the S1ILW4 scheme). This property may be used to increase the speed of the computations.

\begin{figure}[!ht]
    \centering
    \begin{tikzpicture}
        \node (A) at (0,0) {\includegraphics[trim = 6.3cm 2.1cm 4.94cm 2.95cm, clip, width = 12cm]{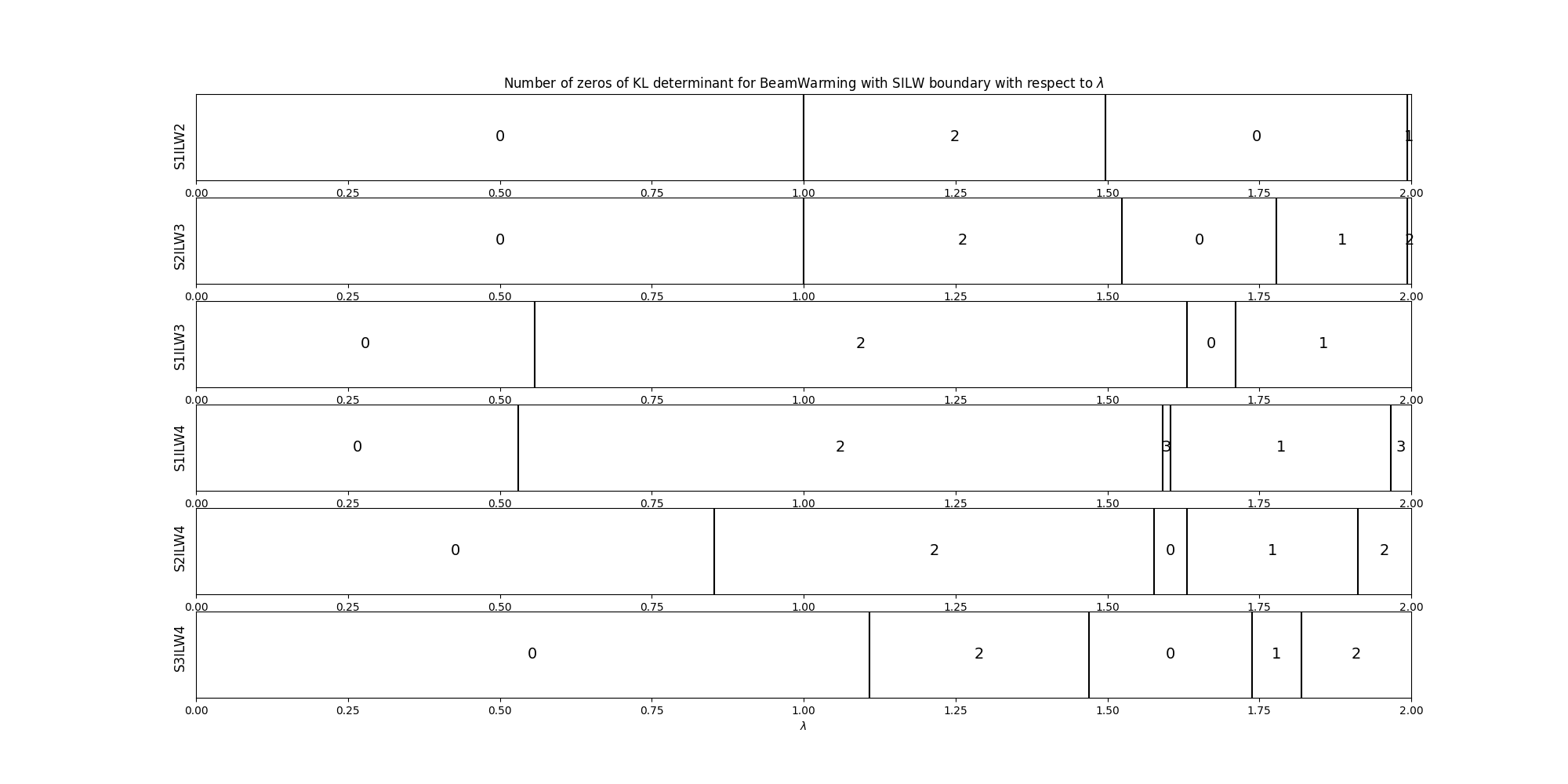}};
        \node (S1ILW2) at (-5.45,2.92) {\scriptsize{S1ILW2}};
        \node (S2ILW3) at (-5.45,1.9) {\scriptsize{S2ILW3}};
        \node (S1ILW3) at (-5.45,0.89) {\scriptsize{S1ILW3}};
        \node (S1ILW4) at (-5.45,-0.12) {\scriptsize{S1ILW4}};
        \node (S2ILW4) at (-5.45,-1.14) {\scriptsize{S2ILW4}};
        \node (S3ILW4) at (-5.45,-2.15) {\scriptsize{S3ILW4}};
        \node (l) at (0,-3.15) {\scriptsize{$\lambda$}};
    \end{tikzpicture}
    \caption{Number of zeros of Kreiss-Lopatinskii determinant for Beam-Warming scheme with different simplified inverse Lax-Wendroff boundary with respect to $\lambda$.}\label{fig:NbrZerosSILW}
\end{figure}

All the figures can be easily computed in Python with the common  NumPy \cite{Numpy20} library. The algorithm is really quick (less than one minute of computation achieved on a standard laptop). Moreover, our procedure provides sharp results, directly available on $\ell^2$. In particular, contrary to numerical investigations of stability which are based on the computation of the spectral radius, no arbitrary truncation of (quasi-)Toeplitz matrices is needed.


\subsection{Misalignment between boundaries and grid points}

Motivated for example by solving multidimensional problems discretized on a cartesian grids, or of one-dimensional problems  with moving boundaries as well, a usual idea consists in extrapolating the physical boundary condition to the first boundary points. This idea may be combined with the inverse Lax-Wendroff procedure in order to improve the accuracy at the boundary, see \cite{Dakin18}, \cite{Vilar15} and \cite{Li16}. As an archetype for such a situation, we consider hereafter a simple misalignment between the left physical boundary and the first numerical grid point. The advection equation \eqref{eq:advection} is set on the space domain $[x_\sigma,1]$:
\begin{align}\label{eq:advectionsigma}
    \begin{cases}
     \partial_t u + a \partial_x u = 0, &  t\>0, x \in [\borneinfintervalspatial,1],\\
    u(t,\borneinfintervalspatial) = g(t), & t\>0, \\
     u(0,x) = f(x), & x\in [\borneinfintervalspatial,1]. 
    \end{cases}
\end{align}
The space discretization $j\Delta x$ for $j\in\mathbb{Z}$, does not take into account the point $\borneinfintervalspatial$, so that one may write $\borneinfintervalspatial = (j_{\gap} + \gap) \dx$ for some integer $j_{\gap} \in \Z$ and the gap (generally nonzero) $\gap \in [-\frac 1 2, \frac 1 2[$. The scheme~\eqref{eqprincip}-\eqref{eqbord}-\eqref{eqinit} is then implemented for $j \>j_{\gap}$ only, but with $r$ ghost points at $j_{\gap} - 1,\dots, j_{\gap} - r$. For simplicity in the presentation and by translational invariance, we assume from now on that $j_{\gap} = 0$. We obtain the discretization represented on Figure~\ref{fig:reprmesh}.
\begin{figure}[!h]
\centering
    \begin{tikzpicture}
        \draw[->] (-0.2,0)--(10.2,0);
        \draw[->] (0,-0.2)--(0,1.5);
        \draw (1,-0.1)--(1,0.1);
        \draw (3,-0.1)--(3,0.1);
        \draw (6,-0.1)--(6,0.1);
        \node (A) at (1,-0.1) [below]{$\gap\dx$};
        \node (A) at (3,-0.1) [below]{$\dx$};
        \node (A) at (6,-0.1) [below]{$2\dx$};
        \node (A) at (0,-0.15) [below]{$0$};
        \draw[fill,black] (0,0) circle (0.05);
        \draw[fill,black] (6,0) circle (0.05);
        \draw[fill,black] (3,0) circle (0.05);
        \node (A) at (1,0) {$\times$};
        \node () at (10.2,0) [right]{$x$};
        \node () at (0,1.5) [above]{$t$};
        \draw[color = gray] (1,0) -- (1,1.3);
        \node[rotate = 90, color = gray] () at (1,0.65) [below]{$g(t)$};
    \end{tikzpicture}
    \caption{Representation of the mesh.}\label{fig:reprmesh}
\end{figure}
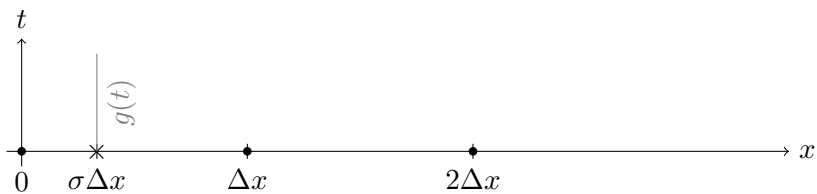

As explained above, because of the misalignment between the mesh and the boundary position, the simplified inverse Lax-Wendroff procedure~\eqref{eq:SILWproc} presented above has to be slightly adapted (see~\cite{Vilar15}). The numerical boundary condition reads
\begin{equation*}
    U_{j}^n = \sum_{k=0}^{k_d-1} \dfrac{(-(j+\gap)\dx)^k}{k!} \dfrac{g^{(k)}(n\dt)}{a^k}+ \sum_{k=k_d}^{d-1} \dfrac{(j+\gap)^k}{k!} \sum_{s=0}^{k} \tbinom{k}{s} (-1)^{k-s} U_{s}^n,\quad j \in \interval{-r}{-1}.
\end{equation*}

We perform the stability analysis of the above scheme, according to the values of both the CFL parameter $\lambda$ and the gap parameter $\sigma$. For example, with the Beam-Warming scheme~\eqref{eq:BW} supplemented with the numerical boundary condition S2ILW3 at the point $\borneinfintervalspatial$, the procedure based on the Kreiss-Lopatinskii determinant counts the number of zeros of the Kreiss-Lopatinskii determinant. The corresponding results are represented on Figure~\ref{fig:NbrZerosS2ILW3LambdaSigma}. Of course, on the line $\gap = 0$, we recover the results obtained on Figure~\ref{fig:windingnumberBWS2ILW3}.

\begin{figure}[!ht]
    \begin{minipage}[b]{0.47\linewidth}
    \begin{tikzpicture}
        \node () at (0,0) {\includegraphics[trim = 2.2cm 1.1cm 2.1cm 1.8cm, clip, width = 7.3cm]{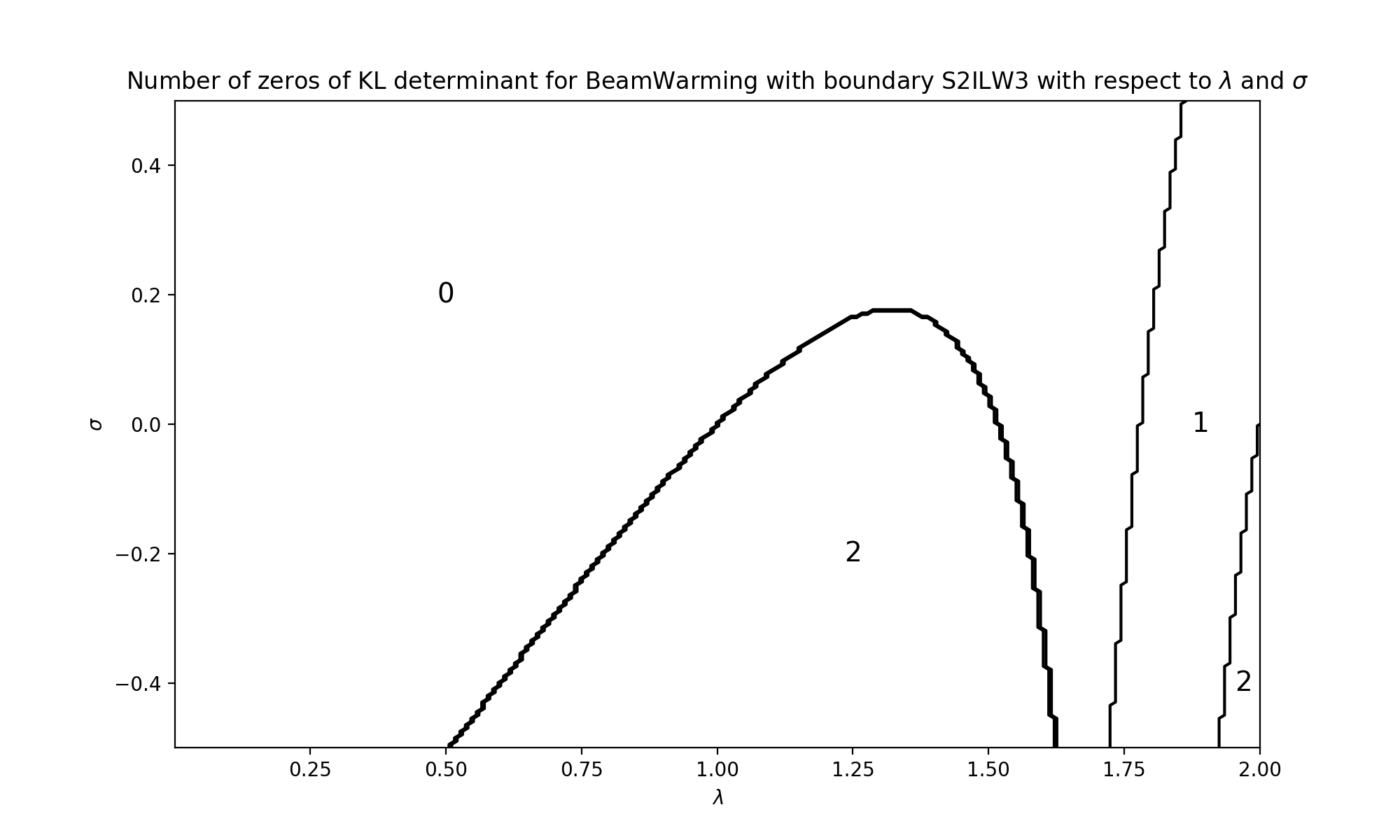}};      
            \node () at (0,0) {\includegraphics[trim = 2.2cm 1.1cm 2.1cm 1.8cm, clip, width = 7.3cm]{fig/NbrZerosS2ILW3Sigmapetit.png}};      
        \node () at (0,0) {\includegraphics[trim = 2.2cm 1.1cm 2.1cm 1.8cm, clip, width = 7.3cm]{fig/NbrZerosS2ILW3Sigmapetit.png}};      
        \node () at (3.2,-2.1) {\scriptsize{$\lambda$}};  
            \node () at (3.2,-2.1) {\scriptsize{$\lambda$}};  
        \node () at (3.2,-2.1) {\scriptsize{$\lambda$}};  
        \node () at (-3.5,1.99) {\scriptsize{$\sigma$}};  
            \node () at (-3.5,1.99) {\scriptsize{$\sigma$}};  
        \node () at (-3.5,1.99) {\scriptsize{$\sigma$}};  
    \end{tikzpicture}
\end{minipage}
\hfill
    \begin{minipage}[b]{0.48\linewidth}
    \centering
    \begin{tikzpicture}
    \node () at (0,0) {\includegraphics[trim = 2.2cm 1.1cm 2.1cm 1.8cm, clip, width = 7.3cm]{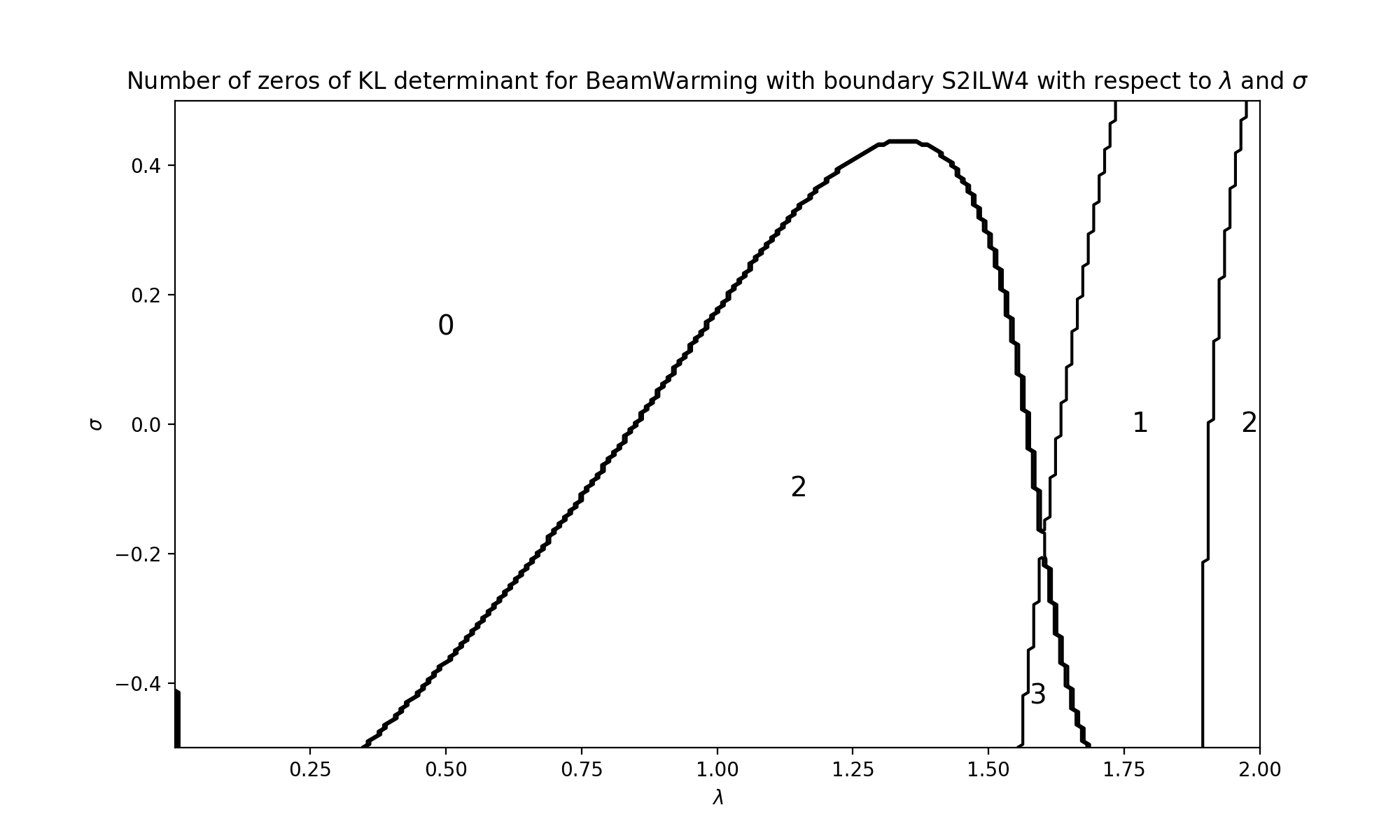}};  
        \node () at (0,0) {\includegraphics[trim = 2.2cm 1.1cm 2.1cm 1.8cm, clip, width = 7.3cm]{fig/NbrZerosS2ILW4Sigma.png}};  
    \node () at (0,0) {\includegraphics[trim = 2.2cm 1.1cm 2.1cm 1.8cm, clip, width = 7.3cm]{fig/NbrZerosS2ILW4Sigma.png}};  
    \node () at (3.2,-2.1) {\scriptsize{$\lambda$}};  
        \node () at (3.2,-2.1) {\scriptsize{$\lambda$}};  
    \node () at (3.2,-2.1) {\scriptsize{$\lambda$}};  
    \node () at (-3.5,1.99) {\scriptsize{$\sigma$}};  
        \node () at (-3.5,1.99) {\scriptsize{$\sigma$}};  
    \node () at (-3.5,1.99) {\scriptsize{$\sigma$}};  
\end{tikzpicture}
    \end{minipage}

    \caption{Stability of the Beam-Warming~\eqref{eq:BW} with S2ILW3 boundary condition (left) and with S2ILW4 boundary condition (right).}\label{fig:NbrZerosS2ILW3LambdaSigma}
\end{figure}

\bigskip

Let us now consider a very simple application of the above results, considering the advection equation in 2D on a parallelogram domain (specified later)
with a velocity field aligned with the $x$ axis. Using a cartesian grid in both directions $x$ and $y$, the numerical boundary condition will generally not coincide exactly at the grid points and the use of (S)ILW method may appear useful to maintain the order of the scheme. However, it is then mandatory to retain a CFL number for which any of the considered values for the parameter $\gap$ along the boundary belong to the stability condition.
Following the same lines of discussion as for the one-dimensional case, we consider hereafter the next problem where the direction $y$ coincide (artificially) with the parameter $\sigma$ and where the first reference grid cell is again $x_\sigma=0$ ($j_\sigma=0$).
\begin{equation*}
    \begin{cases}
        \partial_t u(t,x,y) + a \partial_xu(t,x,y) = 0, & t\>0, y \in [-1,1], x\in[y\dx,+\infty[,\\
        u(t,y,y) = g(t,y) & t\>0, y\in [-1,1],\\
        u(0,x,y) = 0 & y\in [-1,1], x\in[y\dx,+\infty[.
    \end{cases}
\end{equation*}

In the simulations, the velocity is $a=1$, the boundary condition is $g(t,y) = e^{-200(t-0.25)^2}$ and the initial datum is $f\equiv 0$. The numerical solution is computed at time $T = 0.3$  using the Beam-Warming scheme with S2ILW3, and with $N=1000$ grid points in the (truncated) $x$-direction.
The Figure~\ref{fig:numBWsigma} represents the amplitude of the numerical solution with respect to the space variable $x$ and to the gap $\sigma=y$, the discrete solution being truncated beyond the value $1$ so that unstable boundary oscillations appear as white areas.  The two black lines represents the computational domain of Figure~\ref{fig:NbrZerosS2ILW3LambdaSigma} to confront the left image of Figure~\ref{fig:NbrZerosS2ILW3LambdaSigma} and the images of Figure~\ref{fig:numBWsigma}.We observe a good agreement between the corresponding stable/unstable values of $\sigma$ in~\Cref{fig:NbrZerosS2ILW3LambdaSigma} and~\Cref{fig:numBWsigma}. 

    \begin{figure}[!ht]
        
        \begin{tabular}{m{6cm}m{6cm}}
            \begin{tikzpicture}
                \node () at (0,0) {\includegraphics[trim = 1.9cm 1.2cm 2.8cm 2.1cm, clip, width = 5.7cm]{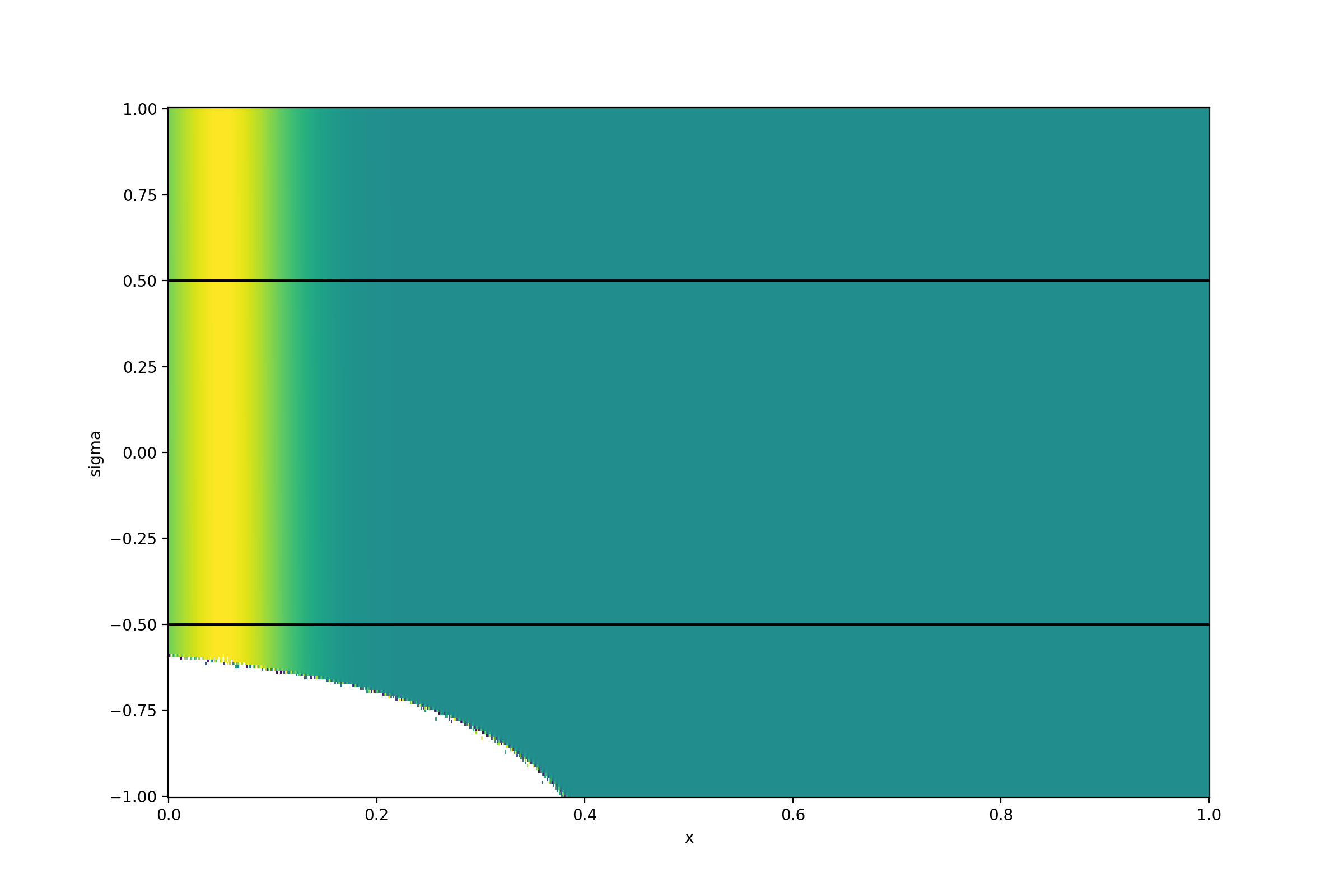}};

                \node () at (0,0) {$\lambda = 0.45$}; 
            \end{tikzpicture} & 
            \begin{tikzpicture}
                \node () at (0,0) {\includegraphics[trim = 1.9cm 1.2cm 2.8cm 2.1cm, clip, width = 5.7cm]{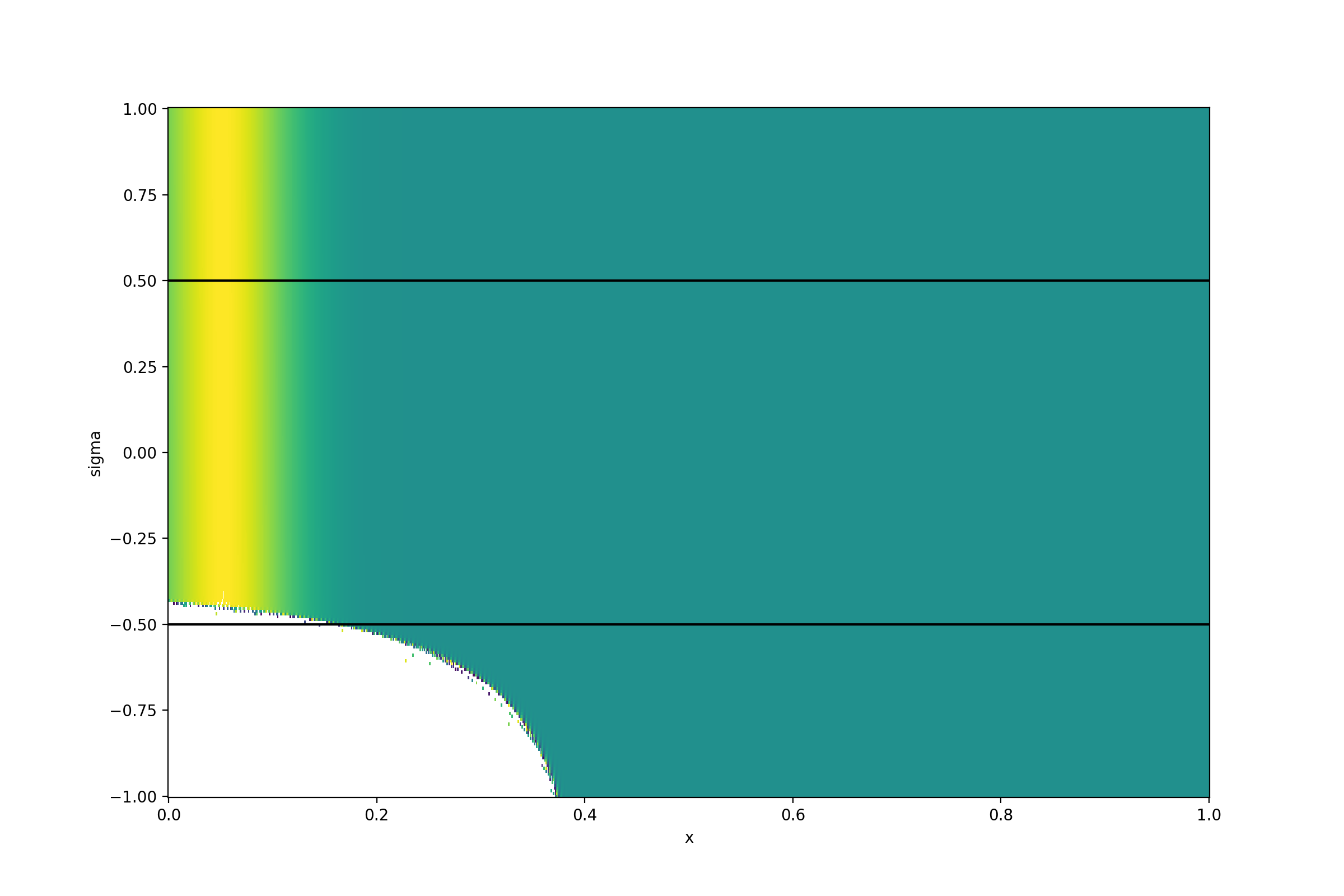}};

                \node () at (0,0) {$\lambda = 0.6$}; 
            \end{tikzpicture}
            \\
            \begin{tikzpicture}
                \node () at (0,0) {\includegraphics[trim = 1.9cm 1.2cm 2.8cm 2.1cm, clip, width = 5.7cm]{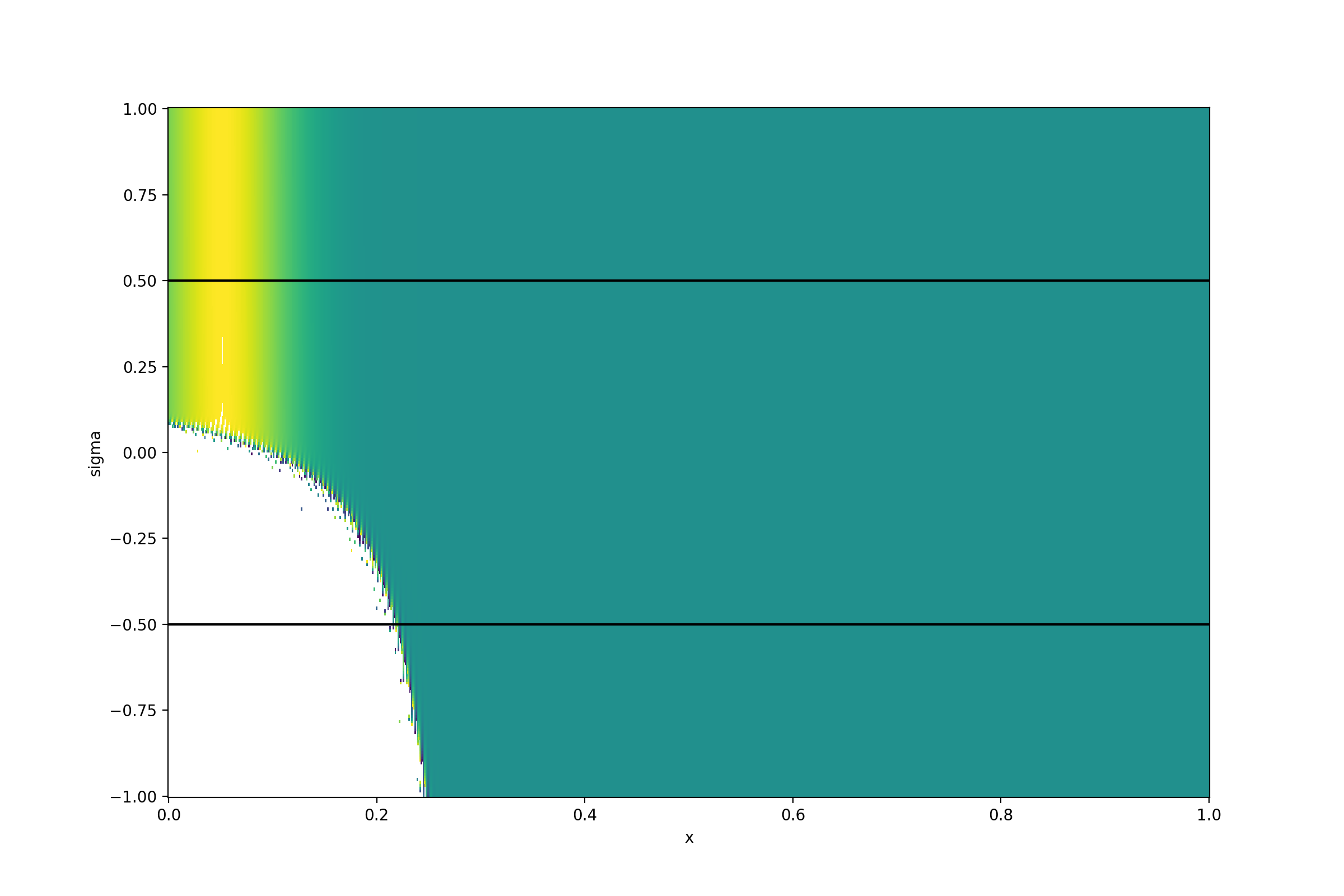}};
                \node () at (0,0) {$\lambda = 1.3$}; 
            \end{tikzpicture}
            &
            \begin{tikzpicture}
                \node () at (0,0) {\includegraphics[trim = 1.9cm 1.2cm 2.8cm 2.1cm, clip, width = 5.7cm]{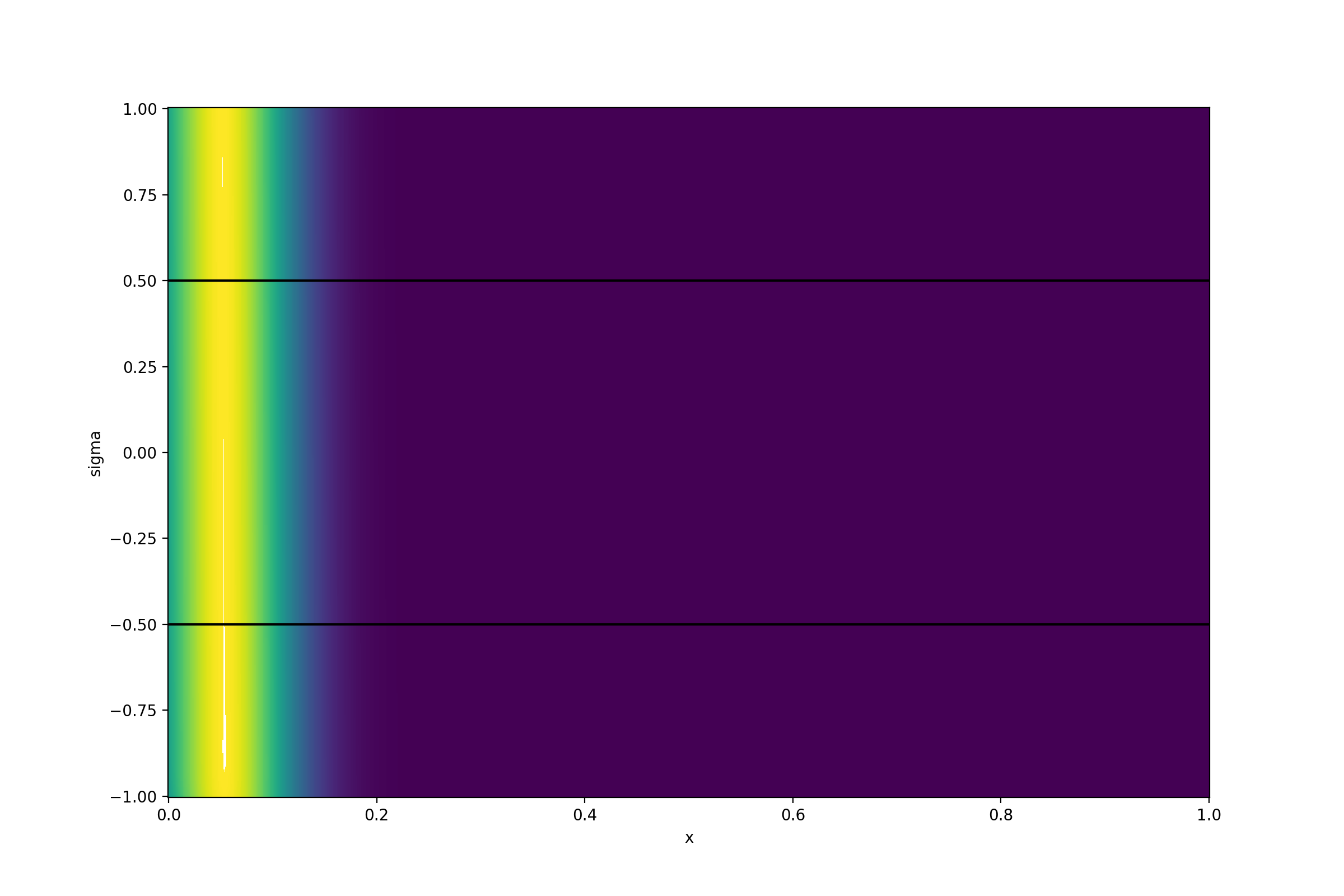}};
                \node[white] () at (0,0) {$\lambda = 1.69$}; 
            \end{tikzpicture} 
        \end{tabular}
        \caption{Numerical simulation of Beam-Warming scheme with S2ILW3 for CFL number $\lambda \in \{0.45, 0.6,1.3, 1.69\}$.}\label{fig:numBWsigma}
    \end{figure}


\section{Future directions}\label{sec:conclusion}

The main drawback of the present theoretical and numerical results is the restriction to the class of totally upwind schemes.
This assumption enables a specific simple analysis of the Kreiss-Lopatinskii determinant, using the explicit formula \eqref{eq:DetKLindep}, and a numerical strategy to conclude to the existence of eigenvalue or generalized eigenvalue. In this way, it answers the stability issue. 
This is only an initial effort 
on the method of designing efficient and automatic 
numerical tools for stability analysis based on the Kreiss-Lopatinskii determinant.
A first extension of the present work is the extension to the case of one-time step explicit schemes without the totally upwind 
assumption that limits the application of our approch to second-order schemes, see Iserles~\cite{Iserles83}.
Such an extension is natural but not straightforward because of the loss of Lemma~\ref{lem:algebra}: the intrinsic Kreiss-Lopatinskii determinant cannot be reduced easily into a formulation involving square matrices.
Another challenging issue is the treatment of multistep schemes and multistep boundary conditions as well. In this direction, explicit schemes may be the most practicable case because many theoretical tools remain available (Hersh, Kreiss\dots).
The difference is the dependence on $z$ in the coefficients of the characteristic equation \eqref{eq:eqcharac}. Indeed, each coefficient is a polynomial in $z$ of degree $s$ where $s$ is the number of time steps. Hence, an explicit formula of a Kreiss-Lopatinskii is more difficult to compute. In another direction, for implicit schemes or for more general boundary conditions, such as absorbing boundary conditions~\cite{Engquist77} and~\cite{Ehrhardt10} or transparent boundary conditions~\cite{Arnold03} and~\cite{Coulombel19}, it seems to be even more challenging to have a such easy-to-use theory.

\bibliographystyle{abbrv}
\bibliography{biblio}

\begin{thebibliography}{10}

\bibitem{Arnold03}
A.~Arnold, M.~Ehrhardt, and I.~Sofronov.
\newblock Discrete {transparent} {boundary} {conditions} for the
  {S}chr{\"o}dinger {equation}: fast {calculation}, {approximation}, and
  {stability}.
\newblock {\em Communications in Mathematical Sciences}, 1(3):501--556, 2003.

\bibitem{Beam93}
R.~M. Beam and R.~F. Warming.
\newblock The asymptotic spectra of banded {Toeplitz} and quasi-{Toeplitz}
  matrices.
\newblock {\em SIAM Journal on Scientific Computing}, 14(4):971--1006, 1993.

\bibitem{Benzoni06}
S.~Benzoni-Gavage and D.~Serre.
\newblock {\em Multi-dimensional hyperbolic partial differential equations:
  First-order Systems and Applications}.
\newblock OUP Oxford, 2006.

\bibitem{Borovykh00}
N.~Borovykh and M.~N. Spijker.
\newblock Resolvent {conditions} and {bounds} on the {powers} of {matrices},
  with {relevance} to {numerical} {stability} of {initial} {value} {problems}.
\newblock {\em J. Comput. Appl. Math.}, 125(1-2):41--56, 2000.

\bibitem{Coulombel11}
J.-F. Coulombel.
\newblock Stability of {finite} {difference} {schemes} for {hyperbolic}
  {initial} {boundary} {value} {problems}.
\newblock In {\em H{CDTE} lecture notes. {P}art {I}. {N}onlinear hyperbolic
  {PDE}s, dispersive and transport equations}, volume~6 of {\em AIMS Ser. Appl.
  Math.}, page 146. Am. Inst. Math. Sci. (AIMS), Springfield, MO, 2013.

\bibitem{Coulombel19}
J.-F. Coulombel.
\newblock Transparent {numerical} {boundary} {conditions} for {evolution}
  {equations}: {derivation} and {stability} {analysis}.
\newblock {\em Ann. Fac. Sci. Toulouse, Math. (6)}, 28(2):259--327, 2019.

\bibitem{Courant28}
R.~Courant, K.~Friedrichs, and H.~Lewy.
\newblock {\"U}ber die {partiellen} {Differenzengleichungen} der
  {mathematischen} {Physik}.
\newblock {\em Mathematische Annalen}, 100(1):32--74, 1928.

\bibitem{Crank47}
J.~Crank and P.~Nicolson.
\newblock A practical {method} for {numerical} {evaluation} of {solutions} of
  {partial} {differential} {equations} of the {heat}-{conduction} type.
\newblock {\em Mathematical Proceedings of the Cambridge Philosophical
  Society}, 43(1):50--67, 1947.

\bibitem{Dakin18}
G.~Dakin, B.~Despr{\'e}s, and S.~Jaouen.
\newblock Inverse {Lax}--{Wendroff} {boundary} {treatment} for {compressible}
  {Lagrange}-{remap} {hydrodynamics} on {cartesian} {grids}.
\newblock {\em Journal of Computational Physics}, 353:228--257, 2018.

\bibitem{Ehrhardt10}
M.~Ehrhardt.
\newblock Absorbing {boundary} {conditions} for {hyperbolic} {systems}.
\newblock {\em Numer. Math., Theory Methods Appl.}, 3(3):295--337, 2010.

\bibitem{Engquist77}
B.~Engquist and A.~Majda.
\newblock Absorbing {boundary} {conditions} for the {numerical} {simulation} of
  {waves}.
\newblock {\em Math. Comput.}, 31:629--651, 1977.

\bibitem{Zapata12}
J.~L. Garc{\'{i}}a~Zapata and J.~C. D{\'{i}}az~Mart{\'{i}}n.
\newblock A {geometric} {algorithm} for {winding} {number} {computation} with
  {complexity} {analysis}.
\newblock {\em Journal of Complexity}, 28(3):320--345, 2012.

\bibitem{Zapata14}
J.~L. Garc{\'{i}}a~Zapata and J.~C. D{\'{i}}az~Mart{\'{i}}n.
\newblock Finding the number of roots of a polynomial in a plane region using
  the winding number.
\newblock {\em Comput. Math. Appl.}, 67(3):555--568, 2014.

\bibitem{Gasquet13}
C.~Gasquet and P.~Witomski.
\newblock {\em Fourier {Analysis} and {Applications}: {Filtering}, {Numerical}
  {Computation}, {Wavelets}}, volume~30.
\newblock Springer Science \& Business Media, 2013.

\bibitem{Godunov63}
S.~K. Godunov and V.~S. Ryabenkii.
\newblock Spectral {stability} {criteria} for {boundary}-{value} {problems} for
  non-self-adjoint {difference} {equations}.
\newblock {\em Russian Mathematical Surveys}, 18(3):1--12, 1963.

\bibitem{Goldberg77}
M.~Goldberg.
\newblock On a {boundary} {extrapolation} {theorem} by {Kreiss}.
\newblock {\em Mathematics of Computation}, 31(138):469--477, 1977.

\bibitem{Gustafsson08}
B.~Gustafsson.
\newblock {\em High {Order} {Difference} {Methods} for {Time} {Dependent}
  {PDE}}.
\newblock Number~38 in Springer series in computational mathematics. Springer,
  Berlin, 2008.

\bibitem{Gustafsson13}
B.~Gustafsson, H.-O. Kreiss, and J.~Oliger.
\newblock {\em Time-dependent problems and difference methods}.
\newblock Pure and Applied Mathematics: A Wiley Series of Texts, Monographs and
  Tracts. Wiley, 2013.

\bibitem{Gustafsson72}
B.~Gustafsson, H.-O. Kreiss, and A.~Sundstr{\"o}m.
\newblock Stability theory of difference approximations for mixed initial
  boundary value problems. {II}.
\newblock {\em Mathematics of Computation}, 26(119):649--649, 1972.

\bibitem{Numpy20}
C.~R. Harris, K.~J. Millman, S.~J. van~der Walt, R.~Gommers, P.~Virtanen,
  D.~Cournapeau, E.~Wieser, J.~Taylor, S.~Berg, N.~J. Smith, R.~Kern, M.~Picus,
  S.~Hoyer, M.~H. van Kerkwijk, M.~Brett, A.~Haldane, J.~F. del R{\'{i}}o,
  M.~Wiebe, P.~Peterson, P.~G{\'{e}}rard-Marchant, K.~Sheppard, T.~Reddy,
  W.~Weckesser, H.~Abbasi, C.~Gohlke, and T.~E. Oliphant.
\newblock Array programming with {NumPy}.
\newblock {\em Nature}, 585(7825):357--362, Sept. 2020.

\bibitem{Hersh63}
R.~Hersh.
\newblock Mixed {problems} in {several} {variables}.
\newblock {\em Journal of Mathematics and Mechanics}, 12(3):317--334, 1963.

\bibitem{Iserles83}
A.~Iserles and G.~Strang.
\newblock The {optimal} {accuracy} of {difference} {schemes}.
\newblock {\em Transactions of the American Mathematical Society},
  277(2):779--803, 1983.

\bibitem{Lang13}
S.~Lang.
\newblock {\em Complex {analysis}}, volume 103.
\newblock Springer Science \& Business Media, 2013.

\bibitem{Lax56}
P.~D. Lax and R.~D. Richtmyer.
\newblock Survey of the {stability} of {linear} {finite} {difference}
  {equations}.
\newblock {\em Communications on pure and applied mathematics}, 9(2):267--293,
  1956.

\bibitem{Li22}
T.~Li, J.~Lu, and C.-W. Shu.
\newblock Stability {analysis} of {inverse} {Lax}--{Wendroff} {boundary}
  {treatment} of {high} {order} {compact} {difference} {schemes} for
  {parabolic} {equations}.
\newblock {\em Journal of Computational and Applied Mathematics}, 400:113711,
  2022.

\bibitem{Li16}
T.~Li, C.-W. Shu, and M.~Zhang.
\newblock Stability {analysis} of the {inverse} {Lax}--{Wendroff} {boundary}
  {treatment} for {high} {order} {upwind}-{biased} {finite} {difference}
  {schemes}.
\newblock {\em Journal of Computational and Applied Mathematics}, 299:140--158,
  2016.

\bibitem{Li17}
T.~Li, C.-W. Shu, and M.~Zhang.
\newblock Stability {analysis} of the {inverse} {Lax}--{Wendroff} {boundary}
  {treatment} for {high} {order} {central} {difference} {schemes} for
  {diffusion} {equations}.
\newblock {\em Journal of Scientific Computing}, 70(2):576--607, 2017.

\bibitem{Metivier04}
G.~M{\'{e}}tivier and K.~Zumbrun.
\newblock Symmetrizers and {continuity} of {stable} {subspaces} for
  {parabolic}-{hyperbolic} {boundary} {value} {problems}.
\newblock {\em Discrete {\&} Continuous Dynamical Systems - A}, 11(1):205--220,
  2004.

\bibitem{Spijker02}
M.~N. Spijker, S.~Tracogna, and B.~D. Welfert.
\newblock About the sharpness of the stability estimates in the {Kreiss} matrix
  theorem.
\newblock {\em Mathematics of Computation}, 72(242):697--714, 2002.

\bibitem{Tan10}
S.~Tan and C.-W. Shu.
\newblock Inverse {Lax}-{Wendroff} {procedure} for {numerical} {boundary}
  {conditions} of {conservation} {laws}.
\newblock {\em Journal of Computational Physics}, 229(21):8144 -- 8166, 2010.

\bibitem{Thune86}
M.~Thun\'{e}.
\newblock Automatic {GKS} {stability} {analysis}.
\newblock {\em SIAM J. Sci. Statist. Comput.}, 7(3):959--977, 1986.

\bibitem{Trefethen83}
L.~N. Trefethen.
\newblock Group {velocity} {interpretation} of the {stability} {theory} of
  {G}ustafsson, {K}reiss, and {S}undstr\"{o}m.
\newblock {\em J. Comput. Phys.}, 49(2):199--217, 1983.

\bibitem{Trefethen84}
L.~N. Trefethen.
\newblock Instability of {difference} {models} for {hyperbolic} {initial}
  {boundary} {value} {problems}.
\newblock {\em Communications on Pure and Applied Mathematics}, 37(3):329--367,
  1984.

\bibitem{Trefethen05}
L.~N. Trefethen and M.~Embree.
\newblock {\em Spectra and {Pseudospectra}: {The} {Behavior} of {Nonnormal}
  {Matrices} and {Operators}}.
\newblock Princeton University Press, Princeton, N.J, 2005.

\bibitem{Vilar15}
F.~Vilar and C.-W. Shu.
\newblock Development and {stability} {analysis} of the {inverse} {Lax}
  {Wendroff} {boundary} {treatment} for {central} {compact} {schemes}.
\newblock {\em ESAIM: Mathematical Modelling and Numerical Analysis},
  49(1):39--67, 2015.

\bibitem{Wu95}
L.~Wu.
\newblock The {semigroup} {stability} of the {difference} {approximations} for
  {initial}-{boundary} {value} {problems}.
\newblock {\em Mathematics of Computation}, 64(209):71--71, 1995.

\end{thebibliography}

\end{document}